\DeclareMathOperator{\diver}{\mathrm{div}}
\newtheorem{mydef}{Definition}
\newtheorem{thm}{Theorem}[section]
\newtheorem{lem}{Lemma}[section]
\newtheorem{rk}{Remark}
\newtheorem{cor}{Corollary}[section]
\numberwithin{rk}{section}
\numberwithin{prop}{section}
\numberwithin{mydef}{section}
\numberwithin{lem}{section}
\numberwithin{equation}{section}
\numberwithin{thm}{section}
\title[degenerate compressible  Navier-Stokes equations]{Global  regular solutions for  1-D degenerate compressible  Navier-Stokes equations with large data and  far field vacuum}
\date{\today}
\author{Yue Cao}
\address[Yue Cao]{School of Mathematical Sciences, and MOE-LSC, Shanghai Jiao Tong University,
Shanghai 200240, P. R. China} \email{\tt cao\_yuecdut@163.com}
\author{Hao Li}
\address[Hao Li]{School of Mathematical Sciences,  Fudan University, Shanghai 200433, P. R. China} \email{\tt  hao\_li@fudan.edu.cn}
\author{Shengguo Zhu }
\address[Shengguo Zhu]{School of Mathematical Sciences, CMA-Shanghai,  and MOE-LSC,  Shanghai Jiao Tong University, Shanghai 200240, P. R. China}
 \email{\tt  zhushengguo@sjtu.edu.cn}
\begin{document}
\begin{abstract}

In this paper, the Cauchy problem for the  one-dimensional (1-D) isentropic  compressible  Navier-Stokes equations (\textbf{CNS}) is considered. When the viscosity $\mu(\rho)$  depends on the 
density $\rho$ in a sublinear power law ($ \rho^\delta$ with $0<\delta\leq 1$), based on an elaborate analysis  of  the intrinsic  singular  structure of this degenerate system,   we prove the  global-in-time well-posedness of  regular solutions with  conserved  total  mass, momentum, and  finite  total  energy in some inhomogeneous Sobolev spaces.   Moreover, the solutions we obtained  satisfy   that   $\rho$  keeps positive for all point $x\in \mathbb{R}$  but decays to zero in the far field, which is consistent with the facts  that the total  mass of  the whole space is conserved, and  \textbf{CNS} is a model of non-dilute fluids where $\rho$ is bounded below away from zero.   The key to the proof is the introduction of a well-designed reformulated structure   by introducing some new variables and initial compatibility conditions, which, actually,  can transfer the degeneracies of the time evolution and the viscosity  to the possible singularity of some special source terms. Then, combined  with the  BD entropy estimates and    transport properties of the  so-called effective velocity $v=u+\varphi(\rho)_x$ ($u$ is the velocity of the fluid, and $\varphi(\rho)$ is a function of $\rho$ defined by $\varphi'(\rho)=\mu(\rho)/\rho^2$), one can obtain the required uniform a priori estimates of  corresponding solutions.   Moreover,   in contrast to the classical  theory in the case of  the constant viscosity, one can  show   that  the  $L^\infty$ norm of $u$ of the global regular solution we obtained does not  decay to zero as time $t$ goes to infinity. It is worth pointing out that  the well-posedness theory established here  can be applied to  the viscous Saint-Venant system for the motion of  shallow water.

\end{abstract}
\subjclass[2010]{35A01, 35Q30, 35B65, 35A09.}
\keywords{
Degenerate compressible Navier-Stokes equations, One-dimension, Regular solution, Far field vacuum, Large data, Global-in-time well-posedness.}
\maketitle

\section{introduction}

The time evolution of the mass density $\rho\geq 0$ and the velocity $u=\left(u^{(1)},u^{(2)},\cdots, u^{(d)}\right)^\top$ $\in \mathbb{R}^d$ of a general viscous isentropic compressible fluid occupying a spatial domain $\Omega\subset \mathbb{R}^d$ is governed by the following isentropic compressible Navier-Stokes equations (\textbf{ICNS}):
\begin{equation}
\label{eq:1.1}
\begin{cases}
\rho_t+\text{div}(\rho u)=0,\\[4pt]
(\rho u)_t+\text{div}(\rho u\otimes u)
+\nabla
P =\text{div} \mathbb{T}.
\end{cases}
\end{equation}
Here,  $x=(x_1,x_2,\cdots, x_d)^{\top}\in \Omega$, $t\geq 0$ are the space and time variables, respectively.  
 For the polytropic gases, the constitutive relation is given by
\begin{equation}
\label{eq:1.2}
P=A\rho^{\gamma}, \quad A>0,\quad  \gamma> 1,
\end{equation}
where $A$ is  an entropy  constant and  $\gamma$ is the adiabatic exponent. $\mathbb{T}$ denotes the viscous stress tensor with the  form
\begin{equation}
\label{eq:1.3}
\begin{split}
&\mathbb{T}=2\mu(\rho)D(u)+\lambda(\rho)\text{div}u\,\mathbb{I}_d,
\end{split}
\end{equation}
 where $D(u)=\frac{1}{2}\big(\nabla u+(\nabla u)^\top\big)$ is the deformation tensor,    $\mathbb{I}_d$ is the $d\times d$ identity matrix,
\begin{equation}
\label{fandan}
\mu(\rho)=\alpha  \rho^\delta,\quad \lambda(\rho)=\beta  \rho^\delta,
\end{equation}
for some  constant $\delta\geq 0$,
 $\mu(\rho)$ is the shear viscosity coefficient, $\lambda(\rho)+\frac{2}{d}\mu(\rho)$ is the bulk viscosity coefficient,  $\alpha$ and $\beta$ are both constants satisfying
 \begin{equation}\label{10000}\alpha>0 \quad \text{and} \quad   2\alpha+d\beta\geq 0.
 \end{equation}

In the theory of gas dynamics, the \textbf{CNS} can be derived from the Boltzmann equations through the Chapman-Enskog expansion, cf. Chapman-Cowling \cite{chap} and Li-Qin \cite{tlt}. Under some proper physical assumptions, one can find that  the viscosity coefficients and heat conductivity coefficient $\kappa$ are not constants but functions of the absolute temperature $\theta$ such as:
\begin{equation}
\label{eq:1.5g}
\begin{split}
\mu(\theta)=&a_1 \theta^{\frac{1}{2}}F(\theta),\quad \lambda(\theta)=a_2 \theta^{\frac{1}{2}}F(\theta), \quad \kappa(\theta)=a_3 \theta^{\frac{1}{2}}F(\theta),
\end{split}
\end{equation}
for some constants $a_i$ $(i=1,2,3)$ (see \cite{chap}). Actually for the cut-off inverse power force models, if the intermolecular potential varies as $r^{-\nu}$,
where $ r$ is intermolecular distance, then in (\ref{eq:1.5g}):
$$F(\theta)=\theta^{b}\quad \text{with}\quad b=\frac{2}{\nu} \in [0,\infty).$$
In particular (see \S 10 of \cite{chap}), for ionized gas,
$\nu=1$ and $ b=2$;
for Maxwellian molecules,
$\nu=4$ and $ b=\frac{1}{2}$; 
while for rigid elastic spherical molecules,
$\nu=\infty$ and $  b=0$.

According to Liu-Xin-Yang \cite{taiping}, for isentropic and polytropic fluids, such a dependence is inherited through the laws of Boyle and Gay-Lussac:
$$
P=R\rho \theta=A\rho^\gamma \quad \text{for \  constant} \quad R>0,
$$i.e., $\theta=AR^{-1}\rho^{\gamma-1}$, and one can see that the viscosity coefficients are functions of $\rho$ of  the form $(\ref{fandan})$.   Actually, there do exist some physical models that satisfy the density-dependent viscosities  assumption \eqref{fandan}, such as Korteweg system, Shallow water equations, lake equations and quantum Navier-Stokes system (see \cite{bd6, bdvv,bd2, bd,  BN2,  Mar,vy2}).\smallskip

In the current paper,   the following   1-D degenerate    \textbf{ICNS} is considered:
\begin{equation}\label{e1.1}
\left\{
\begin{aligned}
&\rho_t+(\rho u)_x=0,\\[4pt]
&(\rho u)_t+(\rho u^2)_x+ P(\rho)_x=(\mu(\rho)u_x)_x,
\end{aligned}
\right.
\end{equation}
where the  pressure $P$ and the viscosity coefficient $\mu(\rho)$ are given by \eqref{eq:1.2} and \eqref{fandan}, respectively.
 We are concerned with the  global well-posedness of   regular solutions with large data and far field vacuum  to the Cauchy problem of  \eqref{e1.1} with the following initial data and far field behavior:
\begin{equation}\label{e1.3}
\begin{split}
\displaystyle
(\rho(0,x),u(0,x))=(\rho_0(x),u_0(x)) \quad  & \text{for}\quad  x\in\mathbb R,\\[4pt]
\displaystyle
\left(\rho(t,x),u(t,x)\right)\to \left(0,0\right)\quad  \text{as}\quad  \left|x\right|\to \infty\quad & \text{for}\quad   t\ge 0.
\end{split}
\end{equation}


 Throughout this paper, we adopt the following simplified notations, most of them are for the standard homogeneous and inhomogeneous Sobolev spaces:
\begin{equation*}\begin{split}
\displaystyle
 & \|f\|_s=\|f\|_{H^s(\mathbb{R}^d)},\quad |f|_p=\|f\|_{L^p(\mathbb{R}^d)},\quad \|f\|_{m,p}=\|f\|_{W^{m,p}(\mathbb{R}^d)},\\[6pt]
 \displaystyle
 & D^{k,r}=\{f\in L^1_{loc}(\mathbb{R}^d): |f|_{D^{k,r}}=|\nabla^kf|_{r}<\infty\},\quad D^k=D^{k,2},  \\[6pt]
 \displaystyle
  & D^{1}_*=\{f\in L^6(\mathbb{R}^3):  |f|_{D^1_*}= |\nabla f|_{2}<\infty\},\\[6pt]
  \displaystyle
&  |f|_{D^1_*}=\|f\|_{D^1_*(\mathbb{R}^3)},\quad \|f\|_{L^p([0,T];L^q)} = \|f\|_{L^p([0,T]; L^q(\mathbb{R}^d))},\\[6pt]
  \displaystyle
 & \|f\|_{X_1 \cap X_2}=\|f\|_{X_1}+\|f\|_{X_2}, \quad    \int f\text{d}x=\int_{\mathbb{R}^d}  f \text{d}x, \\[6pt]
 &  X([0,T]; Y)=X([0,T]; Y(\mathbb{R}^d)),\quad \|(f,g)\|_X=\|f\|_{X}+\|g\|_{X}.\end{split}
\end{equation*}
We will clearly indicate the value of $d$ where the above notations are used.
 A detailed study of homogeneous Sobolev spaces  can be found in \cite{gandi}.

 For the  flows of constant viscosities  ($\delta=0$ in (\ref{fandan})), there is a lot of literature on the  well-posedness  of strong/classical solutions to the system   \eqref{eq:1.1}.    When  $\inf_x {\rho_0(x)}>0$, the local well-posedness of three-dimensonal (3-D) classical solutions to the Cauchy problem of  \eqref{eq:1.1} follows from the standard symmetric hyperbolic-parabolic structure satisfying  the well-known Kawashima's condition, cf. \cite{KA, nash, serrin}, which has been extended to be a global one by Matsumura-Nishida \cite{mat} for initial data close to a non-vacuum equilibrium in some Sobolev space $H^s(\mathbb{R}^3)$ ($s>\frac{5}{2}$).  In $\mathbb{R}$, the global well-posedness of strong solutions with arbitrarily large data in some bounded domains has been proven by   Kazhikhov-Shelukhin \cite{KS}, and later, Kawashima-Nishida \cite{KN}  extended this theory  to the unbounded domains. However, these approaches do not work when $\inf_x {\rho_0(x)}=0$, which occurs when some physical requirements are imposed, such as finite total initial mass  and energy in the whole space.  One of the  main issues  in the presence of vacuum is the degeneracy of the time evolution operator, which makes it hard to  understand the behavior of the  velocity field near the vacuum.
A  remedy was suggested  by Cho-Choe-Kim \cite{CK3}, where they imposed  initially a {\it compatibility condition}:
\begin{equation*}
-\text{div} \mathbb{T}_{0}+\nabla P(\rho_0)=\sqrt{\rho_{0}} g \quad \text{for\  \ some} \ \ g\in L^2(\mathbb{R}^3),
\end{equation*}
which  leads to  $\sqrt{\rho}u_t\in L^\infty ([0,T_{*}]; L^2)$  for a  short  time $T_{*}>0$.  Then  they established  successfully   the local well-posedness of  smooth solutions with vacuum  in $\mathbb{R}^3$ (see also Cho-Kim \cite{guahu}), which,  recently,  has been shown to be   a global one with small energy by Huang-Li-Xin \cite{HX1} in  $\mathbb{R}^3$. Later,  Jiu-Li-Ye \cite{j2}  proved that the 1-D Cauchy problem of  \eqref{eq:1.1} admits  a unique global classical solution  with arbitrarily large data and vacuum.

 For density-dependent viscosities ($\delta>0$ in (\ref{fandan})) which degenerate at the  vacuum,  system \eqref{eq:1.1} has received extensive attentions in recent years. When  $\inf_x {\rho_0(x)}>0$, some important progresses  on  the global well-posedness of  classical solutions to the Cauchy problem of  \eqref{eq:1.1}  have been obtained, which include 
  Burtea-Haspot \cite{bh}, Constantin-Drivas-Nguyen-Pasqualotto \cite{cons}, Haspot \cite{HB}, Kang-Vasseur \cite{kv},
   Mellet-Vasseur \cite{vassu2}  for 1-D flow with arbitrarily large data, and Sundbye \cite{Sundbye2}  and Wang-Xu \cite{weike} for two-dimensional (2-D) flow with  initial data close to a non-vacuum equilibrium in some Sobolev spaces.  When vacuum appears, instead of the uniform elliptic
structure, the viscosity degenerates when density vanishes, which raises the difficulty of the
problem to another level.  Actually,  in  \cite{CK3, HX1,j2}, the uniform ellipticity of the Lam\'e operator $L$ defined by
\begin{equation*}Lu =-\alpha\triangle u-(\alpha+\beta)\nabla \mathtt{div}u
\end{equation*}
($Lu=-\alpha u_{xx}$ in $\mathbb{R}$) plays an essential role in the high order regularity estimates on $u$. One can use the standard elliptic theory to estimate $|u|_{D^{k+2}}$  by the $D^k$-norm of all other terms in momentum equations.  However, for $\delta>0$,  viscosity coefficients vanish as the  density function connects to vacuum continuously.  This degeneracy makes it difficult to adapt the approach of  the constant viscosity case in \cite{CK3,HX1,j2} to the current case.
A remarkable discovery of a new mathematical entropy
function was made by Bresch-Desjardins \cite{bd} for
 $\lambda(\rho)$ and $\mu(\rho)$ satisfying the relation
\begin{equation}\label{bds}\lambda(\rho)=2(\mu'(\rho)\rho-\mu(\rho)),
\end{equation}
which offers an estimate
$
\mu'(\rho)\nabla \sqrt{\rho}\in L^\infty ([0,T];L^2(\mathbb{R}^d))
$
provided that 
$ \mu'(\rho_0)\nabla \sqrt{\rho_0}\in L^2(\mathbb{R}^d)$ for any $d\geq 1$.
This observation plays an important role in the development of the global existence of weak solutions with vacuum for system (\ref{eq:1.1}) and some related models, see Bresch-Desjardins \cite{bd6}, Bresch-Vasseur-Yu \cite{bvy}, Jiu-Xin \cite{j1}, Li-Xin \cite{lz}, Mellet-Vasseur \cite{vassu1}, Vasseur-Yu \cite{vayu} and so on. However, the regularities and uniquness of such weak solutions with  vacuum  remain open.

Notice that, for  the cases  $\delta\in (0,\infty)$,  if  $\rho>0$, $\eqref{eq:1.1}_2$ can be formally rewritten as
\begin{equation}\label{qiyi}
\begin{split}
u_t+u\cdot\nabla u +\frac{A\gamma}{\gamma-1}\nabla\rho^{\gamma-1}+\rho^{\delta-1} Lu=\psi \cdot  Q(u),
\end{split}
\end{equation}
where the quantities $\psi$ and $Q(u)$ are given by
\begin{equation}\label{operatordefinition}
\begin{split}
\psi \triangleq & \nabla \ln \rho \quad \text{when}\quad \delta=1;\\
\psi \triangleq & \frac{\delta}{\delta-1} \nabla \rho^{\delta-1}\quad \text{when}\quad \delta \in (0,1)\cup (1,\infty);\\
Q(u)\triangleq &\alpha(\nabla u+(\nabla u)^\top)+\beta\mathtt{div}u\mathbb{I}_d.
\end{split}
\end{equation}
When $\delta=1$, from  (\ref{qiyi})-(\ref{operatordefinition}),  the degeneracy of time evolution and elliptic structure in momentum equations caused by the  vacuum has been transferred to the possible singularity of the  term  $\nabla \ln \rho$, which actually  can be controlled by a symmetric hyperbolic system with a source term  $\nabla \text{div}u$ in  Li-Pan-Zhu \cite{sz3}. Then via establishing the  uniform a priori estimates in $L^6\cap D^1\cap D^2$  for $\nabla \ln \rho$, the existence of 2-D local classical solution with far field vacuum   to  \eqref{eq:1.1} has been obtained in \cite{sz3}, which also applies to the 2-D Shallow water equations.
When $\delta> 1$,  (\ref{qiyi})-(\ref{operatordefinition}) imply that actually the velocity $u$ can be governed by a nonlinear degenerate  parabolic system without singularity   near the vacuum region.
Based on this observation, by using some hyperbolic  approach which bridges the parabolic system (\ref{qiyi}) when $\rho>0$ and the hyperbolic one
$$u_t+u\cdot\nabla u=0 \quad \text{when} \quad  \rho=0,$$   the existence of 3-D local classical solutions with vacuum to  \eqref{eq:1.1} was established in  Li-Pan-Zhu \cite{sz333}.
The corresponding global well-posedness in some homogeneous Sobolev spaces  has been established  by Xin-Zhu \cite{zz} under some initial smallness  assumptions. Recently, for the case $0<\delta<1$, via introducing an elaborate elliptic approach on the operators $L(\rho^{\delta-1}u)$ and some initial compatibility conditions, the existence of 3-D local regular solution with far field vacuum has been obtained by Xin-Zhu  \cite{zz2}. Some other interesting results and discussions can also be seen in Chen-Chen-Zhu \cite{Geng2}, Ding-Zhu \cite{ding}, Geng-Li-Zhu \cite{zhu}, Germain-Lefloch \cite{Germain}, Guo-Li-Xin \cite{guo}, Lions \cite{lions},  Luo-Xin-Zeng \cite{luotao}, Yang-Zhao \cite{zyj}, Yang-Zhu \cite{tyc2}, Zhu \cite{zhuthesis} and so on.  

However,  due to the complicated mathematical structure of the degenerate \textbf{ICNS} and lack of smooth effect on solutions when vacuum appears,  even many important progresses have been obtained, a lot of fundamental questions still remain open, such as  which kind of initial data can cause the finite time blow-up of smooth solutions, or provide  the global existence of smooth solutions  with  conserved  total  mass, momentum, and  finite  total  energy. In the current  paper,  we identify a class of 1-D initial data, which  admits a global  classical  solution with far field vacuum to the Cauchy problem  for the cases $0<\delta\leq 1$  in some inhomogeneous Sobolev spaces.

\smallskip

\subsection{Global-in-time well-posedness of regular solutions}


In order to present our results clearly, we first introduce the following two definitions for the case $0<\delta<1$ and the case $\delta=1$
of regular solutions with far field vacuum  to the Cauchy problem \eqref{e1.1}-\eqref{e1.3}, respectively.

\begin{mydef}\label{def2}
Assume $0<\delta<1$ and  $T> 0$. The pair $(\rho,u)$ is called a regular solution to the Cauchy problem \eqref{e1.1}-\eqref{e1.3} in $[0,T]\times \mathbb R$, if $(\rho, u)$ satisfies this problem in the sense of distributions and:
\begin{equation*}
\begin{aligned}
&(1)\ \inf_{(t,x)\in [0,T]\times \mathbb R}\rho(t,x)=0,\quad 0<\rho^{\gamma-1}\in C([0,T];H^2),\\
& \quad \ \ (\rho^{\delta-1})_x\in C([0,T];H^1);\notag\\
&(2)\ u\in C([0,T];H^2)\cap L^2([0,T]; H^3),\quad  \rho^{\frac{\delta-1}{2}}u_x\in C([0,T]; L^2), \\ 
& \quad \ \ u_t\in C([0,T];L^2)\cap L^2([0,T];H^1),\quad \rho^{\frac{\delta-1}{2}}u_{tx}\in L^2([0,T];L^2),\\
&\quad \ \  \rho^{\delta-1} u_{xx}\in C([0,T];L^2)\cap L^2([0,T];H^1).
\end{aligned}
\end{equation*}
\end{mydef}

\begin{mydef}\label{def1}
Assume $\delta=1$ and $T> 0$. The pair $(\rho,u)$ is called a regular solution to the Cauchy problem \eqref{e1.1}-\eqref{e1.3} in $[0,T]\times \mathbb R$, if $(\rho, u)$ satisfies this problem in the sense of distributions and:
\begin{equation*}
\begin{aligned}
&(1)\ \inf_{(t,x)\in [0,T]\times \mathbb R}\rho(t,x)=0,\quad 0<\rho^{\gamma-1}\in C([0,T];H^2),\\
& \quad \ \ (\ln\rho)_x\in C([0,T];H^1); \notag\\
\notag
&(2)\ u\in C([0,T];H^2)\cap L^2([0,T]; H^3),\quad u_t\in C([0,T];L^2)\cap L^2([0,T];H^1).\notag\\
\end{aligned}
\end{equation*}
\end{mydef}

\begin{rk}\label{r1}
First, it follows from  the Definitions \ref{def2}-\ref{def1} that $
(\rho^{\delta-1})_x\in L^\infty$ for the case $ 0<\delta<1$, and 
 $(\ln \rho)_x\in L^\infty$ for the case $\delta=1$,
 which, along with $\rho^{\gamma-1}\in C([0,T];H^2)$, means that the vacuum occurs if and only if in the far field.

Second, we introduce some physical quantities that will be used in this paper:
\begin{equation*}
\begin{split}
m(t)=&\int \rho(t,x){\rm{d}}x\quad \textrm{(total mass)},\\
\mathbb{P}(t)=&\int \rho(t,x)u(t,x){\rm{d}}x \quad \textrm{(momentum)},\\
E_k(t)=&\frac{1}{2}\int \rho(t,x)u^2(t,x){\rm{d}}x\quad \textrm{ (total kinetic energy)},\\
E(t)=&E_k(t)+\int \frac{P(t,x)}{\gamma-1}{\rm{d}}x \quad \textrm{ (total energy)}.
\end{split}
\end{equation*}
Actually, it follows from the  definitions that the  regular solution satisfies the conservation of  total mass (see Lemma \ref{ms}) and   momentum (see Lemma \ref{lemmak}).  Furthermore, it satisfies the energy equality (see Lemma \ref{l4.1}  and its proof in Appendix B).  Note  that the conservation of  momentum is not  clear   for the  strong solution with vacuum  to the flows of constant viscosities Jiu-Li-Ye \cite{j2} (see an example on non-conservation of momentum in \cite{zz2} ). In this sense, the definitions of regular solutions here are consistent with the physical background of \textbf{CNS}.
\end{rk}

The regular solutions select  velocity in a physically reasonable way when the density approaches the  vacuum at far fields. Under the help of this notion of solutions, the  momentum equations can be reformulated into a special quasi-linear parabolic system with some possible singular source terms near the  vacuum, and  the coefficients in front  of  $u_{xx}$ will tend to $\infty$ as $\rho\rightarrow 0$ in the far field for $0<\delta<1$. However, the problems  become trackable through some   elaborate  arguments on the  energy estimates.

For the case $0<\delta<1$, the global-in-time well-posedness result reads as:

\begin{thm}\label{th2}
Assume $\delta$ and $\gamma$ satisfy 
\begin{equation}\label{canshe}
0<\delta<1,\quad \gamma>1, \quad  \gamma\geq \delta+\frac12.
\end{equation}
If the initial data $(\rho_0, u_0)$ satisfy 
\begin{equation}\label{e1.7}
 0<\rho_0\in L^1, \quad (\rho_0^{\gamma-1}, u_0)\in H^2, \quad (\rho_0^{\delta-1})_x\in H^1,\end{equation}
and the initial compatibility conditions
\begin{equation}\label{e1.8}
\partial_x u_0=\rho_0^{\frac{1-\delta}{2}}g_1,\quad   \alpha \partial_{x}^2 u_0=\rho_0^{1-\delta} g_2,
\end{equation}
for some $g_i\in L^2(i=1,2)$,
then the Cauchy problem \eqref{e1.1}-\eqref{e1.3}  admits a unique global classical solution
$(\rho, u)$  in $(0,\infty)\times \mathbb R$  satisfying that, for any $0<T<\infty$, $(\rho, u)$  is a regular one in  $[0,T]\times \mathbb R$ as defined in Definition \ref{def2}, and  
\begin{equation}\label{er1}
\begin{split}
&\rho\in C([0,T];L^1),\quad (\rho^{\gamma-1})_t\in C([0,T];H^1),\quad (\rho^{\delta-1})_{tx}\in C([0,T];L^2),  \\
&t^{\frac12}\rho^{\frac{\delta-1}{2}}u_{tx}\in L^\infty([0,T];L^2)\cap L^2([0,T];D^1),\quad  t^{\frac12}u_{tt}\in L^2([0,T];L^2).
\end{split}
\end{equation}

\end{thm}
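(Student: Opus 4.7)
The natural plan is to establish a local-in-time existence theory with the claimed regularity class, then derive \emph{uniform} (in $T$) a priori estimates that let one iterate/extend the local solution to all of $(0,\infty)$ by a standard continuation argument. For the local step, I would regularize the initial density to stay uniformly away from zero (e.g. consider $\rho_0^\eta = \rho_0 + \eta e^{-x^2}$ or solve in a bounded interval with artificial boundary data), solve the resulting non-degenerate problem by the Cho--Choe--Kim type iteration scheme, and then pass $\eta \to 0$. The main purpose of the two compatibility conditions in \eqref{e1.8} is precisely to guarantee that, in the limit $\eta \to 0$, one recovers the correct \emph{weighted} bounds $\rho^{(\delta-1)/2}u_{0,x}\in L^2$ and $\rho^{\delta-1}u_{0,xx}\in L^2$, which in turn give the crucial $u_t(0)\in L^2$ and $\rho^{\delta-1}u_{xx}(0)\in L^2$ information needed to close a priori estimates. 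Uniqueness follows from a standard energy estimate on the difference of two solutions, using the regularity in Definition \ref{def2}.

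The heart of the argument is the a priori estimate scheme. The basic energy identity (conservation of mass, momentum, and total energy) is immediate from Definition \ref{def2}. The next layer is the Bresch--Desjardins entropy, which in 1D and with $\lambda\equiv 0$ takes the form of multiplying the momentum equation by $v = u + \varphi(\rho)_x$, where $\varphi'(\rho)=\mu(\rho)/\rho^2=\alpha\rho^{\delta-2}$, i.e.\ $\varphi(\rho)=\tfrac{\alpha}{\delta-1}\rho^{\delta-1}$. Because $v$ satisfies a transport-like equation modulo pressure and the viscous term collapses to a damping contribution on $\rho^{\gamma+\delta-2}\rho_x^2$, one obtains
\[
\frac{d}{dt}\int \tfrac12\rho v^2\,{\rm d}x + \int \rho^{\gamma+\delta-2}\rho_x^2\,{\rm d}x \lesssim 0,
\]
which provides control of $(\rho^{\delta-1})_x\in L^\infty_t L^2_x$ together with $\rho v\in L^\infty_t L^2_x$. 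The condition $\gamma\geq \delta+\tfrac12$ in \eqref{canshe} enters precisely here to ensure this BD-quantity dominates a power of $\rho$ that is compatible with the pressure, and to allow an interpolation yielding an a priori $L^\infty$ bound on $\rho$ (hence an upper bound on $\rho^{\gamma-1}$ and on the degenerate coefficient $\rho^{\delta-1}$ in $H^1$).

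The higher regularity cascade is then driven by the reformulated momentum equation
\[
u_t + uu_x + A\gamma\rho^{\gamma-2}\rho_x = \alpha\rho^{\delta-1}u_{xx} + \alpha\delta\rho^{\delta-1}(\ln\rho)_x u_x,
\]
where the source $\rho^{\delta-1}(\ln\rho)_x = \tfrac{1}{\delta}(\rho^\delta)_x/\rho$ is dangerous near the far-field vacuum but controlled by the BD estimate. Testing with $u_t$ and with $u_{xx}$, and using the compatibility structure $u_{0x}=\rho_0^{(1-\delta)/2}g_1$, $\alpha u_{0xx}=\rho_0^{1-\delta}g_2$, one closes estimates for $\rho^{(\delta-1)/2}u_x$, $u_t$, and $\rho^{\delta-1}u_{xx}$ in $L^\infty_t L^2_x$, then differentiates the equation in $t$ (multiplying by $t$ to absorb the singular initial layer) to obtain the weighted bounds in \eqref{er1} for $t^{1/2}\rho^{(\delta-1)/2}u_{tx}$ and $t^{1/2}u_{tt}$. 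The transport equation for $\rho^{\gamma-1}$ and for $(\rho^{\delta-1})_x$ give the claimed continuity-in-time of the density variables and the strict positivity $\rho(t,x)>0$ for all $x\in\mathbb R$, because the logarithmic Lipschitz bound on $\ln\rho$ coming from $u_x\in L^1_t L^\infty_x$ (via $H^1\hookrightarrow L^\infty$ in 1D) prevents interior vacuum formation in finite time.

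The main obstacle will be the \emph{global-in-time} smallness/boundedness of the BD-type dissipation, not merely its local version: the pressure term $\rho^\gamma$ and the cross-term $\int\rho u \varphi(\rho)_x\,{\rm d}x$ must be absorbed into the dissipation $\int \rho^{\gamma+\delta-2}\rho_x^2$ uniformly in $t$, which is exactly where the restriction $\gamma\geq \delta+\tfrac12$ is unavoidable. Once this is in hand, all higher-order estimates above are \emph{linear} Gronwall inequalities with time-integrable sources (the time integrability coming from the BD dissipation), so no exponential blow-up appears and the local solution extends globally. I expect that verifying this uniform closure of the BD estimate, and simultaneously keeping the degenerate coefficient $\rho^{\delta-1}$ locally bounded while it blows up at infinity, will be the technically heaviest step; the remaining pieces are relatively standard parabolic energy estimates given the reformulation.
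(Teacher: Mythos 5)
Your overall architecture (regularize the far-field vacuum, local existence via a Cho--Choe--Kim type scheme, BD entropy, weighted higher-order energy estimates, time-weighted estimates for $t^{1/2}u_{tt}$, continuation) matches the paper's. But two of the most delicate steps are handled by mechanisms that do not work, and these are precisely the steps where the paper's actual ideas live.

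First, the upper bound $|\rho(t,\cdot)|_\infty\le C_0$. You propose to get it by ``interpolation'' from the BD quantity. The BD estimate controls $\sqrt{\rho}\,\varphi(\rho)_x=\alpha\rho^{\delta-3/2}\rho_x$ in $L^\infty_tL^2_x$, i.e.\ $(\rho^{\delta-1/2})_x\in L^\infty_tL^2_x$. For $\delta>\tfrac12$ this indeed bounds $\rho^{\delta-1/2}$ and hence $\rho$ from above (combined with $\rho\in L^1\cap L^\gamma$), but for $0<\delta\le\tfrac12$ the exponent $\delta-\tfrac12$ is nonpositive and controlling $(\rho^{\delta-1/2})_x$ gives no upper bound on $\rho$ whatsoever; the theorem covers all $0<\delta<1$. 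The paper instead integrates the momentum equation over $(-\infty,x]$, sets $\xi(t,x)=\int_{-\infty}^x\rho u\,{\rm d}z$, derives $\xi_t+u\xi_x+\alpha\rho^{\delta-1}(\rho_t+u\rho_x)+A\rho^\gamma=0$, and observes that $\xi+\tfrac{\alpha}{\delta}\rho^\delta$ is nonincreasing along particle paths; since $|\xi|\le|\sqrt\rho u|_2|\rho|_1^{1/2}\le C_0$ by the energy estimate and mass conservation, this yields $|\rho^\delta|_\infty\le C_0$. This characteristic argument is indispensable and absent from your proposal.

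Second, the role of $\gamma\ge\delta+\tfrac12$. You place it in the BD estimate itself, claiming the pressure and cross terms must be ``absorbed'' into the dissipation. In fact the BD identity needs no such condition: multiplying the equation for $v=u+\varphi(\rho)_x$ by $v$, the pressure contributes $\int P_x\varphi(\rho)_x\,{\rm d}x=\alpha A\gamma\int\rho^{\gamma+\delta-3}\rho_x^2\,{\rm d}x\ge0$ exactly (note also your dissipation exponent $\gamma+\delta-2$ should be $\gamma+\delta-3$), so Lemma~\ref{l4.2} holds for all $\gamma>1$, $\delta>0$. The condition $\gamma\ge\delta+\tfrac12$ is actually used later, in the pointwise estimate of the effective velocity: $v$ solves the damped transport equation $v_t+uv_x+\tfrac{A\gamma}{\alpha}\rho^{\gamma-\delta}(v-u)=0$, and the characteristic representation bounds $|v|_\infty$ by $\int_0^t|\rho^{\gamma-\delta}u|_\infty\,{\rm d}s\le\int_0^t|\rho|_\infty^{\gamma-\delta-1/2}|\rho^{1/2}u|_\infty\,{\rm d}s$, which requires $\gamma-\delta-\tfrac12\ge0$ together with the auxiliary estimates $|\rho^{1/q}u|_q\le C$ and $\int_0^t|\rho^{1/2}u|_\infty\,{\rm d}s\le C$ (Lemmas~\ref{lma}--\ref{cru}). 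This $L^\infty$ bound on $v$, hence on $\rho^{\delta-2}\rho_x=\tfrac1\alpha(v-u)$, is what tames the singular coefficient $\alpha\rho^{\delta-2}\rho_x u_x$ in every subsequent energy estimate; the $L^2$-type BD bound you invoke is far too weak for this purpose. Without these two ingredients the higher-order cascade you describe cannot be closed, so as written the proposal has a genuine gap even though its skeleton is the right one.
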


\begin{rk}\label{rk1}
First, observe that there are no smallness conditions imposed on $(\rho_0, u_0)$.

Second, one   can find the following class of initial data $(\rho_0, u_0)$ satisfying the conditions \eqref{e1.7}-\eqref{e1.8}:
$$\rho_0(x)=\frac{1}{1+x^{2\sigma}},\quad u_0(x)\in C_0^2(\mathbb R),$$
where  $  \max\big\{\frac34, \frac{1}{4(\gamma-1)}\big\}<\sigma<\frac{1}{4(1-\delta)}.$
Particularly, when  $\partial_x u_0$ is  compactly supported,
  the compatibility conditions \eqref{e1.8} are satisfied automatically.

Moreover,   the compatibility conditions \eqref{e1.8} are also necessary for the existence of the unique  regular  solution $(\rho, u)$ obtained in Theorem \ref{th2}.
In particular, the one shown in $\eqref{e1.8}_2$  plays a key role in the derivation of $u_t \in L^\infty([0,T_*];L^2(\mathbb{R}))$, which will be used in the uniform estimates for  $|u|_{D^2}$.

\end{rk}



For the case $\delta=1$, the global-in-time well-posedness result reads as:

\begin{thm}\label{th1}
Assume $\delta=1$ and  $\gamma>\frac32$.
If the initial data $(\rho_0, u_0)$ satisfy 
 \begin{equation}\label{id1}
0<\rho_0\in L^1,  \quad (\rho_0^{\gamma-1}, u_0)\in H^2, \quad (\ln\rho_0)_x\in H^1,
\end{equation}
then  the Cauchy problem \eqref{e1.1}-\eqref{e1.3}  admits a unique global classical solution
$(\rho, u)$  in $(0,\infty)\times \mathbb R$  satisfying that, for any $0<T<\infty$, $(\rho, u)$  is a regular one in  $[0,T]\times \mathbb R$ as defined in Definition \ref{def1}, and 
\begin{equation}\label{er2}
\begin{split}
&\rho\in C([0,T];L^1),\quad (\rho^{\gamma-1})_t\in C([0,T];H^1),\quad   (\ln\rho)_{tx}\in C([0,T];L^2),\\
&t^{\frac12}u_{tx}\in L^\infty([0,T];L^2)\cap L^2([0,T];D^1),\quad t^{\frac12}u_{tt}\in L^2([0,T];L^2).
\end{split}
\end{equation}

\end{thm}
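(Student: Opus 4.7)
The plan is to prove Theorem 1.2 by combining a local-in-time existence and uniqueness result for regular solutions in the sense of Definition 1.2 with a set of uniform a priori estimates that allow propagation up to any finite time $T>0$. Since $\delta=1$, the natural quantity playing the role of $(\rho^{\delta-1})_x$ is $(\ln\rho)_x$, and the effective velocity introduced in the abstract takes the explicit form $v=u+\alpha(\ln\rho)_x$, corresponding to $\varphi(\rho)=\alpha\ln\rho$ with $\varphi'(\rho)=\mu(\rho)/\rho^2=\alpha/\rho$.

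The first step is to reformulate the momentum equation. Dividing the second equation of \eqref{e1.1} by $\rho$ and using $\mu(\rho)=\alpha\rho$ yields
$$u_t + u u_x + P'(\rho)(\ln\rho)_x = \alpha u_{xx} + \alpha(\ln\rho)_x u_x,$$
so that the degeneracies in front of the time derivative and the viscous term are traded for the potentially singular source $\alpha(\ln\rho)_x u_x$. A direct computation using the continuity equation then shows that the effective velocity satisfies the transport-type equation
$$v_t + u v_x + \frac{A\gamma}{\gamma-1}(\rho^{\gamma-1})_x = 0,$$
which does not see the viscous operator and serves as the crucial source of higher-order bounds on $(\ln\rho)_x$ without appealing to parabolic regularity.

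The uniform a priori estimates are carried out in the following order. First, the basic energy equality gives conservation of mass together with bounds on $\sqrt\rho u\in L^\infty([0,T];L^2)$ and $\sqrt\rho u_x\in L^2([0,T];L^2)$. Second, the BD entropy identity, which here is particularly clean because $\lambda(\rho)=2(\mu'(\rho)\rho-\mu(\rho))\equiv 0$ for $\delta=1$, yields a uniform control of $\sqrt\rho(\ln\rho)_x\in L^\infty([0,T];L^2)$ together with a dissipation of the form $\int_0^t\int\rho^{\gamma-2}\rho_x^2\,dx\,ds$; the hypothesis $\gamma>3/2$ is precisely what makes this dissipation consistent with the basic energy bound and the Sobolev embedding for $\rho^{\gamma-1}$. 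Third, combining the transport equation for $v$ with parabolic energy estimates for $u$ (obtained by multiplying the reformulated momentum equation successively by $-u_{xx}$, $u_t$, and their $x$-derivatives), and using the identity $u_x=v_x-\alpha(\ln\rho)_{xx}$ to exchange derivatives between $u$ and $\ln\rho$, one obtains $u\in L^\infty([0,T];H^2)\cap L^2([0,T];H^3)$, $(\ln\rho)_x\in L^\infty([0,T];H^1)$, and $u_t\in L^\infty([0,T];L^2)\cap L^2([0,T];H^1)$. Finally, time-weighted versions of the same estimates, multiplied by $t$, yield the bounds on $t^{1/2}u_{tx}$ and $t^{1/2}u_{tt}$ listed in \eqref{er2}, the parabolic smoothing at $t>0$ replacing any explicit second-order compatibility assumption on the initial data.

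The main obstacle is closing these estimates globally, without any smallness on $(\rho_0,u_0)$. The source $\alpha(\ln\rho)_x u_x$ in the reformulated momentum equation is singular in the far-field vacuum region, while the viscosity $\alpha\rho u_{xx}$ degenerates there as well, so one cannot directly apply parabolic regularity to control $u_{xx}$ by lower-order terms uniformly in $T$. The key cancellation that saves the argument is precisely the transport structure of $v$: the combination $u_x+\alpha(\ln\rho)_{xx}=v_x$ is controlled hyperbolically rather than parabolically, so one bypasses both the degeneracy and the singularity at the same time. Once the uniform estimates are established on an arbitrary $[0,T]$, uniqueness (which follows from the parabolic nature of the equation for $u$ and the transport structure of $\ln\rho$), together with a standard continuation argument, extends the solution to $(0,\infty)\times\mathbb R$ and completes the proof of Theorem 1.2.
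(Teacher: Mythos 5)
Your overall strategy coincides with the paper's: reformulate in terms of $\phi=\frac{A\gamma}{\gamma-1}\rho^{\gamma-1}$ and $\psi=(\ln\rho)_x$, use the basic energy and BD entropy estimates, exploit the transport structure of the effective velocity $v=u+\alpha(\ln\rho)_x$, close the higher-order energy estimates, add time weights, and continue. However, there is a genuine gap at the linchpin of the argument: you never explain how the pointwise bound $|v(t,\cdot)|_\infty\le C$ is actually obtained, and you misattribute the role of the hypothesis $\gamma>\frac32$. The BD entropy estimate holds for every $\gamma>1$ and does not need $\gamma>\frac32$. The restriction enters only when one bounds $v$ along characteristics: writing $\alpha\rho^{-1}\rho_x=v-u$, the $v$-equation becomes $v_t+uv_x+\frac{A\gamma}{\alpha}\rho^{\gamma-1}v=\frac{A\gamma}{\alpha}\rho^{\gamma-1}u$, a damped transport equation whose solution formula requires $\int_0^t|\rho^{\gamma-1}u|_\infty\,{\rm d}s<\infty$. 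The paper controls this by the chain: an $L^\infty$ upper bound on $\rho$ (integrating the momentum equation over $(-\infty,x]$ and following particle paths), the bounds $|\rho^{1/q}u|_q\le C$ for all $2\le q<\infty$ (multiplying the momentum equation by $|u|^pu$), and then $\int_0^t|\rho^\iota u|_\infty\,{\rm d}s\le C$ for some $\iota\in(\frac12,1)$ via Gagliardo--Nirenberg; one then estimates $|\rho^{\gamma-1}u|_\infty\le|\rho|_\infty^{\gamma-1-\iota}|\rho^\iota u|_\infty$, which forces $\gamma-1\ge\iota>\frac12$, i.e.\ $\gamma>\frac32$. Keeping the pressure term in the form $\frac{A\gamma}{\gamma-1}(\rho^{\gamma-1})_x$, as you do, gives no direct route to an $L^\infty$ bound on $v$.

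This is not a cosmetic omission: the $L^\infty$ bound on $v$, together with $|v|_2$ and $|v|_4$, is what yields $\psi\in L^\infty([0,T];L^2\cap L^4)$ and controls the singular source $\alpha\psi u_x$ in every subsequent energy estimate (the $L^2$ estimate of $u$, the first and second order estimates, and the time-weighted ones). Saying that "$v_x$ is controlled hyperbolically" correctly identifies the mechanism in spirit, but without the damping rewriting of the $v$-equation, the intermediate density and weighted-velocity bounds, and the explicit use of $\gamma>\frac32$ there, the estimates do not close. The remaining steps of your outline (energy multipliers $u$, $u_t$, $u_{tt}$, time weights via Lemma \ref{bjr}, and the continuation argument) match the paper and are fine once this block is supplied.
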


\begin{rk}First, observe that there are no smallness conditions imposed on $(\rho_0, u_0)$.

Second,   one  can find the following class of initial data $(\rho_0, u_0)$ satisfying  \eqref{id1}:
$$\rho_0(x)=\frac{1}{1+x^{2\sigma}},\quad u_0(x)\in C_0^2(\mathbb R), $$
where  $ \sigma>\max\big\{\frac34, \frac{1}{4(\gamma-1)}\big\}.$

Moreover,   it  should be pointed out  that the theory established in  Theorem \ref{th1} can be applied to  the viscous \textbf{Saint-Venant} system for the motion of  shallow water (i.e., $\delta=1$, $\gamma=2$, see Gerbeau-Perthame \cite{gpm}).

\end{rk}

\begin{rk}
It is worth pointing out that in the  domain $\Omega=[0,1]$,  Li-Li-Xin \cite{hailiang} proved that the entropy weak solutions for general large initial data satisfying finite  entropy to the corresponding initial-boundary value problem of the system \eqref{e1.1} with $\delta>\frac{1}{2}$  exist globally in time. Moreover, the authors  showed that  for more regular initial data, there
is a global entropy weak solution which is unique and regular with well-defined velocity
field for short time, and the interface of initial vacuum propagates along the particle path
during this time period. Furthermore, the dynamics of weak solutions and vacuum states are investigated rigorously. It is shown that for any global entropy weak solution, any
(possibly existing) vacuum state must vanish within finite time, which means that there exist some time $T_0>0$ (depending on initial data) and a constant $\rho_{-}$ such that 
\begin{equation}\label{vanishvacuum}
\begin{split}
\inf_{x\in \overline{\Omega}}\rho(t,x)\geq \rho_{-}>0 \quad \text{for}\quad t\geq T_0.
\end{split}
\end{equation}
The velocity (even if
regular enough and well-defined) blows up in finite time as the vacuum states vanish, and  after the vanishing of vacuum states, the global entropy weak solution
becomes a strong solution and tends to the non-vacuum equilibrium state exponentially
in time.

According to the conclusions obtained in Theorems \ref{th2}-\ref{th1}, one finds that if we replaced the domain $\Omega=[0,1]$ by the whole space $\mathbb{R}$, at least for sufficiently smooth initial values, the phenomena on  vacuum states observed in \eqref{vanishvacuum} does not happen again. Actually, for the global regular solutions obtained in Theorems \ref{th2}-\ref{th1} to the Cauchy problem \eqref{e1.1}-\eqref{e1.3}, it is obvious that 
$$
 \inf_{x\in \mathbb{R}}\rho(t,x)=0 \quad \text{for any }\quad t\geq 0.
$$

\end{rk}

\subsection{Remarks on the asymptotic behavior of  $u$}

Moreover, in contrast to the classical theory in the case of  the constant viscosity \cite{HX1,mat}, one can  show that the $L^\infty$ norm of $u$ of the global regular solution we obtained does not decay to zero as time $t$ goes to infinity.
First, based on the  physical quantities introduced  in Remark \ref{r1},  we define  a solution class  as follows:
\begin{mydef}\label{d2}
Let $T>0$ be any constant. For   the Cauchy problem \eqref{e1.1}-\eqref{e1.3},  a classical solution     $(\rho,u)$ is said to be in $ D(T)$ if $(\rho,u)$  satisfies the following conditions:
\begin{equation*}\begin{split}
&(\textrm{A})\quad \text{Conservation of total mass:}\\
&\qquad \qquad   \qquad 0<m(0)=m(t)<\infty \ \text{for  any}  \ t\in [0,T];\\
&(\textrm{B})\quad \text{Conservation of momentum:}\\  
&\qquad \qquad  \qquad 0<|\mathbb{P}(0)|=|\mathbb{P}(t)|<\infty \ \text{for  any}  \ t\in [0,T];\\
&(\textrm{C})\quad \text{Finite kinetic energy:}\\
&\qquad \qquad   \qquad 0<E_k(t)<\infty  \ \text{for  any}  \ t\in [0,T].
\end{split}
\end{equation*}
\end{mydef}

Then one has:
\begin{thm}\label{th:2.20}
Assume 
$\gamma > 1$ and $ \delta\geq 0$.
 Then for   the Cauchy problem \eqref{e1.1}-\eqref{e1.3}, there is no classical solution $(\rho,u)\in D(\infty)$  satisfying 
\begin{equation}\label{eq:2.15}
\limsup_{t\rightarrow \infty} |u(t,\cdot)|_{\infty}=0.
\end{equation}

\end {thm}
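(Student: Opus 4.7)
The plan is a short argument by contradiction based purely on the defining properties of the class $D(\infty)$ and an elementary $L^\infty$–$L^1$ duality. Suppose, toward a contradiction, that a classical solution $(\rho,u)\in D(\infty)$ exists with $\limsup_{t\to\infty}|u(t,\cdot)|_\infty=0$. The strategy is to show that this forces the momentum $\mathbb{P}(t)$ to decay to zero, which directly contradicts condition (B) in Definition \ref{d2}.

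First I would record the trivial but decisive pointwise bound. Since $\rho\geq 0$ and $\rho\in L^1$ for each $t$, Hölder's inequality gives
\begin{equation*}
|\mathbb{P}(t)|=\Bigl|\int \rho(t,x)\,u(t,x)\,{\rm d}x\Bigr|\leq |u(t,\cdot)|_{\infty}\int\rho(t,x)\,{\rm d}x = |u(t,\cdot)|_{\infty}\,m(t).
\end{equation*}
Then I would invoke condition (A) to replace $m(t)$ by the time-independent quantity $m(0)\in(0,\infty)$, obtaining
\begin{equation*}
|\mathbb{P}(t)|\leq m(0)\,|u(t,\cdot)|_{\infty}\qquad \text{for every } t\geq 0.
\end{equation*}

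Next, taking $\limsup_{t\to\infty}$ on both sides and using the hypothesis \eqref{eq:2.15}, I would conclude $\limsup_{t\to\infty}|\mathbb{P}(t)|=0$. On the other hand, condition (B) forces $|\mathbb{P}(t)|=|\mathbb{P}(0)|>0$ for all $t\geq 0$, hence $\limsup_{t\to\infty}|\mathbb{P}(t)|=|\mathbb{P}(0)|>0$. The two statements are incompatible, which yields the desired contradiction and completes the proof.

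The argument is essentially one line once the three items in Definition \ref{d2} are accepted, so there is no serious obstacle; the only point deserving attention is checking that no regularity beyond the listed properties is used. In particular, finite kinetic energy (condition (C)) is not invoked in the argument itself, but together with $m(t)<\infty$ it guarantees that the integrals defining $\mathbb{P}(t)$ and $m(t)$ are absolutely convergent, so that the Hölder estimate is justified without additional hypotheses on the decay of $\rho$ or $u$ at spatial infinity. Note that the conclusion is independent of $\delta\geq 0$ and $\gamma>1$, as expected from the purely conservation-law nature of the contradiction.
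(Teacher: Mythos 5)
Your proof is correct and follows essentially the same route as the paper: both arguments rest on conservation of mass and momentum forcing the lower bound $|u(t,\cdot)|_\infty\geq |\mathbb{P}(0)|/m(0)>0$, which is incompatible with \eqref{eq:2.15}. The only cosmetic difference is that you bound $|\mathbb{P}(t)|\leq m(t)\,|u(t,\cdot)|_\infty$ directly by $L^1$--$L^\infty$ duality, whereas the paper passes through the kinetic energy via $|\mathbb{P}(t)|\leq\sqrt{2m(t)E_k(t)}$ and $E_k(t)\leq\tfrac12 m(0)|u(t,\cdot)|_\infty^2$, arriving at the same constant.
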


According to  Theorems \ref{th2}-\ref{th:2.20} and Remark \ref{r1}, one shows that
\begin{cor}\label{th:2.20-c}
 Assume that   $|\mathbb{P}(0)|>0$.
Then  for   the Cauchy problem \eqref{e1.1}-\eqref{e1.3},  the   global  regular  solutions $(\rho,u)$ obtained in Theorems \ref{th2}-\ref{th1} do not 
satisfy \eqref{eq:2.15}.

\end {cor}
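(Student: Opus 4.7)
The plan is entirely deductive: I would show that the global regular solutions produced by Theorems~\ref{th2} and~\ref{th1} automatically lie in the class $D(\infty)$ of Definition~\ref{d2}, whence Theorem~\ref{th:2.20} directly forbids \eqref{eq:2.15}. So the work reduces to checking the three bookkeeping items (A), (B), (C) in Definition~\ref{d2} for every $T>0$, and then letting $T\to\infty$.

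For (A), the conservation of total mass, I would appeal to Lemma~\ref{ms} (already referenced in Remark~\ref{r1}), which applies because the regularity built into Definitions~\ref{def2}-\ref{def1} is more than enough to justify integrating the continuity equation and discarding boundary terms at infinity; combined with $0<\rho_0\in L^1$ from the hypotheses, this gives $0<m(t)=m(0)<\infty$ for all $t\geq 0$. For (C), finite kinetic energy, I would invoke the energy equality of Lemma~\ref{l4.1}, which yields $E(t)\leq E(0)$; the initial energy is finite thanks to $0<\rho_0\in L^1$, $\rho_0^{\gamma-1}\in H^2\hookrightarrow L^\infty$, and $u_0\in H^2\hookrightarrow L^\infty$, so $E_k(t)<\infty$ on $[0,\infty)$. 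Positivity of $E_k(t)$ is not directly required by (C) as I read it, but in any case $|\mathbb{P}(0)|>0$ forces $u_0\not\equiv 0$ and hence $E_k(0)>0$.

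For (B), the key input is the momentum conservation stated in Lemma~\ref{lemmak} of Remark~\ref{r1}. Combined with the standing assumption $|\mathbb{P}(0)|>0$, this gives $|\mathbb{P}(t)|=|\mathbb{P}(0)|>0$ for every $t\geq 0$. Finiteness is routine from Cauchy--Schwarz,
\[
|\mathbb{P}(t)|\leq\Big(\int\rho\,\mathrm{d}x\Big)^{1/2}\Big(\int\rho u^2\,\mathrm{d}x\Big)^{1/2}=\sqrt{2\,m(t)\,E_k(t)}<\infty,
\]
using (A) and (C) already established.

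With (A)--(C) verified, the solution belongs to $D(T)$ for every $T>0$, hence to $D(\infty)$, and Theorem~\ref{th:2.20} gives the contradiction with \eqref{eq:2.15}. There is no real obstacle here: the whole content of the corollary is that the existence theorems deliver solutions in the right functional class and that the hypothesis $|\mathbb{P}(0)|>0$ propagates forward in time via momentum conservation; the genuine work was done in Theorem~\ref{th:2.20}. The only point one must be a little careful about is to verify that the conservation identities of Remark~\ref{r1}, stated on $[0,T]$, extend to $[0,\infty)$, which is automatic because the global regular solution is regular on every finite time interval.
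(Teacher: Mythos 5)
Your proposal is correct and follows essentially the same route as the paper: verify membership in the class $D(T)$ for every $T>0$ via mass conservation (Lemma \ref{ms}), the energy equality (Lemma \ref{l4.1}), and momentum conservation (Lemma \ref{lemmak}), then invoke Theorem \ref{th:2.20}. The only minor quibble is that condition (C) does require $E_k(t)>0$, but this follows for all $t$ from $0<|\mathbb{P}(0)|^2/(2m(0))\leq E_k(t)$ once (A) and (B) are in place, so your argument is complete.
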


The rest of the paper is organized as follows:   \S $2$-\S$3$ are devoted to establishing the global well-posedness of regular solutions with large data stated in Theorems \ref{th2}-\ref{th1}.  We start with the reformulation of the original problem  \eqref{e1.1}-\eqref{e1.3}  as (\ref{e2.2}) ({\rm resp.} \eqref{e2.1}) in terms of the new variables for the case $0<\delta<1$ ({\rm resp.} $\delta=1$), and establish the local-in-time well-posedness of smooth solutions to the reformulated problems  under the assumption  that  the initial density  keeps positive for all point $x\in \mathbb{R}$  but decays to zero as $|x|\rightarrow \infty$ in \S 2.    The local smooth solutions to  (\ref{e2.2}) and (\ref{e2.1})  are  shown to be global ones  in 
\S 3 by deriving  the global-in-time a priori estimates  through energy methods. Here,  we have employed some  arguments due to  
Bresch-Desjardins \cite{bd6, bdvv} and  Bresch-Desjardins-Lin \cite{bd2}   to deal with the strong degeneracy of the density-dependent viscosity   $\mu(\rho)=\alpha \rho^\delta$, which include the well-known  B-D entropy estimates (see Lemma \ref{l4.2}) and the effective velocity (see Lemma \ref{l4.4}). Finally, in \S 4, we give the  proofs for  the non-existence theories of global regular solutions with $L^\infty$ decay on $u$ shown in Theorem \ref{th:2.20} and Corollary \ref{th:2.20-c}.  Furthermore,  we give several appendixes to list some lemmas that are frequently used in our proof,   and review some related  local-in-time well-posedness theories  for multi-dimensional  flows of degenerate viscosities.


\section{Local-in-time well-posedness}

This section will be  devoted to providing the local-in-time well-posedenss of regular solutions with far field vacuum  to the Cauchy problem \eqref{e1.1}-\eqref{e1.3} for the cases $0<\delta\le 1$. 

\subsection{\textbf{Case 1:} $0<\delta<1$}

For the case $0<\delta<1$, the corresponding local-in-time well-posedenss can be achieved by the following four steps.
\subsubsection{\rm{Reformulation}}

Via  introducing the following new variables \begin{equation}\label{tr}
 \phi=\frac{A\gamma}{\gamma-1}\rho^{\gamma-1}, \quad \psi=\frac{\delta}{\delta-1}(\rho^{\delta-1})_x,
 \end{equation}
 the Cauchy problem  \eqref{e1.1}-\eqref{e1.3} can be rewritten as 
\begin{equation}\label{e2.2}
\left\{
\begin{aligned}
&\ \phi_t+u\phi_x+(\gamma-1)\phi u_x=0,\\[6pt]
&\ u_t+u u_x+\phi_x-a\alpha\phi^{2e}u_{xx}=\alpha\psi u_x,\\[6pt]
&\ \psi_t+(u\psi)_x+(\delta-1)\psi u_x+a\delta \phi^{2e} u_{xx}=0,\\[6pt]
&\ (\phi,u,\psi)|_{t=0}=(\phi_0, u_0,\psi_0)\\[6pt]
=&\Big(\frac{A\gamma}{\gamma-1}\rho^{\gamma-1}_0(x),u_0(x),\frac{\delta}{\delta-1}(\rho^{\delta-1}_0(x))_x\Big)\quad \text{for} \quad x\in \mathbb R,\\[6pt]
&\ (\phi,u,\psi)\to (0,0,0) \quad \text{as}\quad  |x|\to \infty  \quad \text{for} \quad  t\geq0,
\end{aligned}
\right.
\end{equation}
where the constants $a$ and $e$ are given by
$$a=\Big(\frac{A\gamma}{\gamma-1}\Big)^{\frac{1-\delta}{\gamma-1}}\quad \text{and} \quad e=\frac{\delta-1}{2(\gamma-1)}<0.$$

\subsubsection{\rm{Discussion on $\nabla\rho_0^{\frac{\delta-1}{2}}\in L^4$ in \cite{zz2}}}

In \cite{zz2}, for the  3-D degenerate system \eqref{eq:1.1}-\eqref{10000}, via introducing the following new variables 
$$
 \phi=\frac{A\gamma}{\gamma-1}\rho^{\gamma-1}, \quad \psi=\frac{\delta}{\delta-1}\nabla\rho^{\delta-1},
$$
Xin-Zhu studied  the following reformulated problem:
\begin{equation}\label{rfp}
\left\{
\begin{aligned}
&\  \phi_t+u\cdot \nabla \phi+(\gamma-1)\phi \text{div} u=0,\\[6pt]
&\ u_t+u\cdot\nabla u +\nabla \phi+a\phi^{2e}Lu=\psi \cdot Q(u),\\[6pt]
&\ \psi_t+\nabla (u\cdot \psi)+(\delta-1)\psi\text{div} u +a\delta \phi^{2e}\nabla \text{div} u=0,\\[6pt]
&\ (\phi, u,\psi)|_{t=0}=(\phi_0, u_0,\psi_0)\\[6pt]
=&\Big(\frac{A\gamma}{\gamma-1} \rho^{\gamma-1}_0(x), u_0(x), \frac{\delta}{\delta-1}\nabla \rho^{\delta-1}_0(x)\Big)\quad \text{for} \quad x\in \mathbb{R}^3,\\[6pt]
&\ (\phi, u,\psi)\rightarrow (0,0,0)\quad \text{as}\quad |x|\rightarrow \infty\quad \text{for} \quad t \geq 0.
\end{aligned}
\right.
\end{equation}

For the above  problem,  the local-in-time existence   of the unique regular solution with far field vacuum  has been proven  in \cite{zz2} (see Lemma \ref{apd1} in Appendix C). It is worth pointing out that in the initial assumptions of Lemma \ref{apd1}, Xin-Zhu required that  
\begin{equation}\label{icd}
\nabla\rho_0^{\frac{\delta-1}{2}}\in L^4,
\end{equation}
which, actually,  is only used in the initial data's approximation process from the non-vacuum flow to the flow with far field vacuum, and does not appear in the  corresponding energy estimates process in the proof for the local existence of the regular solution shown in \cite{zz2}. Therefore, in Lemma \ref{apd1}, the  regularity of the quantity $\nabla\rho^{\frac{\delta-1}{2}}$ in positive time  has not been mentioned.


Actually, in Section 3.6 (page 136) of \cite{zz2}, in order to approximate  the flow with far field vacuum by the non-vacuum flow, for any $\eta\in (0,1)$,  one sets
$$\phi^\eta_{ 0}=\phi_0+\eta,\quad \psi^\eta_{ 0}=\frac{a\delta}{\delta-1} \nabla (\phi_0+\eta)^{2e},\quad h^\eta_0=(\phi_0+\eta)^{2e}.$$
Then the initial compatibility conditions of the perturbed initial data can be given as
\begin{equation*}
\begin{cases}
\displaystyle
\nabla u_0=(\phi_0+\eta)^{-e}g^\eta_1,\quad a Lu_0=(\phi_0+\eta)^{-2e}g^\eta_2,\\[12pt]
\displaystyle
\nabla \Big(a(\phi_0+\eta)^{2e}Lu_0\Big)=(\phi_0+\eta)^{-e}g^\eta_3,
\end{cases}
\end{equation*}
where $g^{\eta}_i$ ($i=1,2,3$) are given as
\begin{equation}\label{leitong}\begin{cases}
\displaystyle
g^\eta_1=\frac{\phi^{-e}_0}{(\phi_0+\eta)^{-e}}g_1,\quad g^\eta_2=\frac{\phi^{-2e}_0}{(\phi_0+\eta)^{-2e}}g_2,\\[8pt]
\displaystyle
g^\eta_3=\frac{\phi^{-3e}_0}{(\phi_0+\eta)^{-3e}}\Big(g_3-\frac{a\eta\nabla \phi^{2e}_0}{\phi_0+\eta}\phi^e_0Lu_0\Big).
\end{cases}
\end{equation}
 According to the definition of $\psi_0^\eta$, one has 
\begin{equation}\label{zp}
\begin{split}
\psi_0^\eta=&\frac{a\delta}{\delta-1}\Big(\frac{\phi_0}{\phi_0+\eta}\Big)^{1-2e}\nabla\phi_0^{2e},\\
\nabla\psi_0^\eta=&\frac{a\delta}{\delta-1}\Big(\Big(\frac{\phi_0}{\phi_0+\eta}\Big)^{1-2e}\nabla^2\phi_0^{2e}+\frac{(1-2e)\eta\nabla\phi_0\otimes \nabla\phi_0^{2e}}{(\phi_0+\eta)^2}\Big(\frac{\phi_0}{\phi_0+\eta}\Big)^{-2e}\Big)\\
=&\frac{a\delta}{\delta-1}\Big(\Big(\frac{\phi_0}{\phi_0+\eta}\Big)^{1-2e}\Big(\nabla^2\phi_0^{2e}+\frac{2(1-2e)}{e}\frac{\eta}{\phi_0+\eta}\nabla\phi_0^{e}\otimes \nabla\phi_0^{e}\Big)\Big),\\
\nabla^2\psi_0^\eta=&\frac{a\delta}{\delta-1}\Big(\Big(\frac{\phi_0}{\phi_0+\eta}\Big)^{1-2e}\Big(\nabla^3\phi_0^{2e}+\frac{1-2e}{e}\frac{\eta\phi_0^{-e}}{\phi_0+\eta}\nabla\phi_0^e\otimes\nabla^2\phi_0^{2e}\Big)\\
&+\frac{2(1-2e)}{e}\Big(\frac{-\eta\phi_0^{1-e}}{e(\phi_0+\eta)^2}(\nabla\phi_0^e)^3+\frac{2\eta}{\phi_0+\eta}\nabla\phi_0^e\otimes\nabla^2\phi_0^e\Big)\Big).
\end{split}
\end{equation}
From  the formula \eqref{zp}, one can find that the condition  \eqref{icd} is only used  to guarantee the uniform boundedness of $ |\psi^\eta_{0}|_{D^1_*}$ with respect to $\eta$. Then it follows from the initial assumptions of Lemma \ref{apd1} that, there exists a $\eta_{1}>0$ such that for any  $0<\eta\leq \eta_{1}$, 
\begin{equation*}\label{co-verfify}
\begin{split}
1+\eta+\|\phi^\eta_{0}-\eta\|_{3}+|\psi^\eta_{0}|_{D^1_*\cap D^2}+\|u_0\|_{3}+|g^\eta_1|_2+|g^\eta_2|_2+|g^\eta_3|_2\\
+\|(h^{\eta}_0)^{-1}\|_{L^\infty\cap D^{1,6} \cap D^{2,3} \cap D^3}+\|\nabla h^\eta_0 /h^{\eta}_0\|_{L^\infty \cap L^6\cap D^{1,3}\cap D^2} & \leq c_0,
\end{split}
\end{equation*}
where $c_0$ is a positive constant independent of $\eta$. 
Based on such kind of initial approximation process,  the Cauchy  problem \eqref{rfp} can be solved locally in time    by studying the corresponding Cauchy  problem away from the vacuum and then  passing to the limit as $\eta\to 0$. The details can be found in \cite{zz2}.

For 1-D case, the corresponding initial data's approximation process from the non-vacuum flow to the flow with far field vacuum   can also  be achieved  from the above process with some minor modifications. Similarly, for any $\eta\in (0,1)$,  one sets
$$\phi^\eta_{ 0}=\phi_0+\eta,\quad \psi^\eta_{ 0}=\frac{a\delta}{\delta-1} \Big((\phi_0+\eta)^{2e}\Big)_x,\quad h^\eta_0=(\phi_0+\eta)^{2e}.$$
Then the initial compatibility conditions of the perturbed initial data can be given as
\begin{equation*}
\displaystyle
\partial_xu_0=(\phi_0+\eta)^{-e}g^\eta_1,\quad  a\alpha \partial^2_xu_0=(\phi_0+\eta)^{-2e}g^\eta_2,
\end{equation*}
where $g^{\eta}_i$ ($i=1,2$) are given in the first line of \eqref{leitong}.
Then it follows from \eqref{e1.7}-\eqref{e1.8} that there also exists a $\eta_1>0$ such that for any $0<\eta<\eta_1$, 
\begin{equation*}\label{st}
\begin{split}
1+\eta+\|\phi_0^\eta-\eta\|_2+\|\psi_0^\eta\|_1+\|u_0\|_2+|g_1^\eta|_2 &\\
+|g_2^\eta|_2+\|(h^{\eta}_0)^{-1}\|_{L^\infty\cap D^{1} \cap D^{2} }+\|(\ln h^\eta_0)_x\|_{1} &\le c_0.
\end{split}
\end{equation*}
 However, compared with the 3D case in \cite{zz2}, it is worth pointing out  that the following initial assumption 
\begin{equation}\label{asu}
\partial_x\rho_0^{\frac{\delta-1}{2}}\in L^4
\end{equation} 
is no longer required  for the current 1D problem. Actually, one easily has 
\begin{equation*}
\partial_x \rho_0^{\frac{\delta-1}{2}}=\frac12\rho_0^{\frac{1-\delta}{2}}\partial_x \rho_0^{\delta-1},
\end{equation*}
 which,  along with the fact $\partial_x\rho_0^{\delta-1} \in H^1$, 
 and  the Sobolev embedding theorem, implies that  \eqref{asu} holds automatically.

\subsubsection{\rm{Local-in-time well-posedness}}

Since the mathematical structure of the system considered in problem \eqref{e2.2} is exactly the same as the one in problem  \eqref{rfp}, then based on the discussion in Section 2.1.2 and Lemma \ref{apd1}, one can obtain the following  local-in-time  well-posedness theory in 1-D space.

\begin{lem}\label{l2}
Assume that $0<\delta<1$ and  $\gamma>1$. If the initial data $(\rho_0, u_0)$ satisfy \eqref{e1.7}-\eqref{e1.8} except $\rho_0\in L^1$,
then there exist a time $T_*>0$ and a unique regular solution $(\rho, u)$ in $[0,T_*]\times \mathbb R$  to the Cauchy problem \eqref{e1.1}-\eqref{e1.3} satisfying \eqref{er1} with $T$ replaced by $T_*$ except $\rho\in C([0,T_*];L^1)$.
\end{lem}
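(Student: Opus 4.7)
The plan is to prove Lemma 2.1 by working entirely on the reformulated problem \eqref{e2.2} and adapting the three-dimensional argument of \cite{zz2} (recalled as Lemma \ref{apd1}) to one dimension, where the structure becomes strictly simpler. Once a smooth solution $(\phi,u,\psi)$ of \eqref{e2.2} on some $[0,T_*]\times\mathbb{R}$ is produced, the original density is recovered from $\rho=\bigl(\tfrac{\gamma-1}{A\gamma}\phi\bigr)^{1/(\gamma-1)}$ and the regularity listed in Definition \ref{def2} together with \eqref{er1} (except $\rho\in C([0,T_*];L^1)$, since $\rho_0\in L^1$ is dropped) follows by unwinding the change of variables \eqref{tr}. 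The first step is the initial data approximation from Section 2.1.2: for $\eta\in(0,\eta_1)$ set $\phi_0^\eta=\phi_0+\eta$ and $\psi_0^\eta=\tfrac{a\delta}{\delta-1}((\phi_0+\eta)^{2e})_x$, and transfer \eqref{e1.8} to $g_i^\eta$ as displayed in \eqref{leitong}. The uniform bound by a constant $c_0$ independent of $\eta$ was already established; crucially, the auxiliary assumption \eqref{asu} needed in 3D is automatic in 1D via $\partial_x\rho_0^{(\delta-1)/2}=\tfrac12\rho_0^{(1-\delta)/2}\partial_x\rho_0^{\delta-1}$ and Sobolev embedding.

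Second, I would solve the $\eta$-approximate Cauchy problem for \eqref{e2.2} with initial data $(\phi_0^\eta,u_0,\psi_0^\eta)$. Because $\phi^\eta\ge\eta>0$ on a short time interval, the coefficient $a\alpha(\phi^\eta)^{2e}$ in front of $u_{xx}$ is bounded above and below, the equation is uniformly parabolic, and the source $\alpha\psi^\eta u_x$ is a bounded perturbation. A standard iteration or contraction-mapping scheme (linearizing the transport equations for $\phi$ and $\psi$ about a known velocity and solving the parabolic equation for $u$) produces a unique smooth solution $(\phi^\eta,u^\eta,\psi^\eta)$. The core task is to run, line by line, the energy estimates of \cite{zz2} on these approximate solutions in order to extract $\eta$-independent bounds on a uniform lifespan $T_*>0$: transport estimates in $H^2$ for $\phi$ (from \eqref{e2.2}$_1$) and in $H^1$ for $\psi$ (from \eqref{e2.2}$_3$ with BD-type cancellation against the $a\delta\phi^{2e}u_{xx}$ term), parabolic estimates in $H^2$ for $u$, and a time-differentiated estimate producing $u_t\in L^\infty_t L^2_x\cap L^2_t H^1_x$ together with $\rho^{(\delta-1)/2}u_{tx}\in L^2_t L^2_x$.

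The main obstacle, exactly as in \cite{zz2}, is the time-derivative estimate for $u$. Differentiating \eqref{e2.2}$_2$ in $t$, multiplying by $u_t$ and integrating generates boundary-in-time terms that require $u_t|_{t=0}\in L^2$; this is precisely where the second compatibility condition $a\alpha\partial_x^2 u_0=\rho_0^{1-\delta}g_2$ in \eqref{e1.8} is used, via the identity $u_t|_{t=0}=-u_0\partial_xu_0-\phi_{0,x}+a\alpha\phi_0^{2e}\partial_x^2u_0+\alpha\psi_0\partial_xu_0$, in which every term is controlled by $(\rho_0^{\gamma-1},u_0)\in H^2$, $\psi_0\in H^1$ and $g_2\in L^2$. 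With $u_t\in L^\infty_tL^2_x$ in hand, reading $\rho^{\delta-1}u_{xx}$ off \eqref{e2.2}$_2$ yields the $H^2$-bound on $u$ and then the $\rho^{\delta-1}u_{xx}\in C_tL^2_x\cap L^2_tH^1_x$ bound. Controlling the possibly singular source $\alpha\psi u_x$ and the large coefficient $\phi^{2e}$ in the far field requires grouping them via the effective velocity variable $v=u+\varphi(\rho)_x$ implicit in \eqref{e2.2}$_2$+\eqref{e2.2}$_3$, which produces the cancellations needed to close the estimates without any smallness.

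Finally, with uniform bounds on $T_*$, Aubin–Lions compactness yields a limit $(\phi,u,\psi)$ as $\eta\to 0^+$ that solves \eqref{e2.2} in the sense of distributions and inherits the regularity in \eqref{er1}; weak lower semicontinuity promotes the $\eta$-uniform bounds to the limit. Uniqueness is obtained by a standard energy estimate on the difference of two regular solutions in $L^2\times L^2\times L^2$, in which the singular coefficients $\phi^{2e}$ and $\psi$ are absorbed by the $H^2$-regularity of either solution and Grönwall's inequality closes the argument.
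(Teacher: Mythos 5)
Your proposal is correct and follows essentially the same route as the paper, which proves Lemma \ref{l2} by invoking the identical mathematical structure of \eqref{e2.2} and the 3-D reformulated problem \eqref{rfp}, together with the non-vacuum approximation $\phi_0^\eta=\phi_0+\eta$ of Section 2.1.2 and the observation that the extra hypothesis \eqref{asu} is automatic in one dimension. Your reconstruction of the underlying argument of \cite{zz2} (uniform parabolic estimates for the $\eta$-problem, the use of $\eqref{e1.8}_2$ to obtain $u_t|_{t=0}\in L^2$, compactness as $\eta\to 0^+$, and an $L^2$ energy estimate for uniqueness) matches what the paper delegates to that reference.
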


\subsubsection{\rm{$\rho\in C([0,T_*];L^1$)}}\label{verf}

It should be emphasized that the initial assumption $\rho_0\in L^1$ in Theorem \ref{th2} is only used in the proof of global-in-time existence of the unique regular solution satisfying \eqref{er1}, which is not required  in the proof for the local-in-time existence of the general regular solution  of  \cite{zz2}. Thus, in the following lemma, we  show  that actually, the solution obtained in Lemma 2.1 still satisfies:
\begin{lem}\label{vsi}Let $(\rho, u)$ in $[0,T_*]\times \mathbb R$  be the solution  to the Cauchy problem \eqref{e1.1}-\eqref{e1.3} obtained in Lemma 2.1. Then
\begin{equation}\label{kakkkk}\rho\in C([0,T_*];L^1) \quad \text{if} \quad \rho_0\in L^1 \quad \text{additionally}. 
\end{equation}
\end{lem}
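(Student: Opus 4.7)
The plan is to reduce \eqref{kakkkk} to two ingredients: (i) the exact mass-conservation identity $|\rho(t,\cdot)|_1=|\rho_0|_1$ on $[0,T_*]$, obtained via the Lagrangian form of the continuity equation, and (ii) the promotion of the pointwise-in-$(t,x)$ continuity of $\rho$ already built into Lemma~\ref{l2} to $L^1$-continuity in time by means of Scheff\'e's lemma.

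For (i), I note that Definition~\ref{def2} gives $u\in C([0,T_*];H^2(\mathbb R))$, so the 1-D embedding $H^2(\mathbb R)\hookrightarrow W^{1,\infty}(\mathbb R)$ guarantees that both $u$ and $u_x$ are bounded on $[0,T_*]\times\mathbb R$. Consequently the characteristic flow
\begin{equation*}
\partial_t X(t,y)=u(t,X(t,y)),\qquad X(0,y)=y,
\end{equation*}
is of class $C^1$ and is a diffeomorphism of $\mathbb R$ for every $t\in[0,T_*]$: properness is immediate from $|X(t,y)-y|\le t\sup_{[0,T_*]\times\mathbb R}|u|$, and the Jacobian satisfies
\begin{equation*}
\partial_y X(t,y)=\exp\Big(\int_0^t u_x(s,X(s,y))\,\text{d}s\Big)>0.
\end{equation*}
Evaluating the first equation of \eqref{e1.1} along characteristics produces the pointwise identity $\rho(t,X(t,y))\,\partial_y X(t,y)=\rho_0(y)$, and the change of variable $x=X(t,y)$ then yields $|\rho(t,\cdot)|_1=|\rho_0|_1$ for every $t\in[0,T_*]$.

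For (ii), the regularity $\rho^{\gamma-1}\in C([0,T_*];H^2(\mathbb R))$ together with $H^2(\mathbb R)\hookrightarrow C(\mathbb R)$ implies $\rho\in C([0,T_*]\times\mathbb R)$, so $\rho(t,x)\to\rho(s,x)$ pointwise in $x$ as $t\to s$. Since $\rho\ge 0$ and the $L^1$ norms match by (i), Scheff\'e's lemma delivers $|\rho(t,\cdot)-\rho(s,\cdot)|_1\to 0$, which is exactly \eqref{kakkkk}. The only mildly delicate step is the change of variables in (i); it reduces to the bijectivity and $C^1$-smoothness of $X(t,\cdot)$ on $\mathbb R$, both automatic from the boundedness of $u_x$ furnished by Lemma~\ref{l2}. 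No smallness condition or extra regularity on $(\rho_0,u_0)$ beyond $\rho_0\in L^1$ is required.
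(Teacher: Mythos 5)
Your proof is correct, but it follows a genuinely different route from the paper's. The paper works entirely in Eulerian variables: it first obtains the uniform bound $\sup_{t}|\rho(t,\cdot)|_1\le C$ by testing $\eqref{e1.1}_1$ against an exponentially decaying cutoff $f_R$, applying the Gronwall inequality and then Fatou's lemma as $R\to\infty$, and it then deduces time continuity from the bound $|\rho_t|_1\le C$ (so that $|\rho(t,\cdot)-\rho(s,\cdot)|_1\le C|t-s|$, i.e.\ Lipschitz continuity into $L^1$, slightly stronger than what is claimed). You instead pass to Lagrangian coordinates to get the exact identity $|\rho(t,\cdot)|_1=|\rho_0|_1$ and then invoke Scheff\'e's lemma; this is cleaner and yields conservation of total mass as a by-product (a fact the paper proves separately in Lemma \ref{ms}, using the conclusion of the present lemma — your argument would subsume that step, with no circularity). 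The one point you should make explicit is that the flow-map identity $\rho(t,X(t,y))\,\partial_yX(t,y)=\rho_0(y)$ requires the continuity equation to hold pointwise, whereas Definition \ref{def2} only asserts it in the sense of distributions; this upgrade is legitimate here because $\rho>0$ everywhere and $\rho^{\gamma-1}\in C([0,T_*];H^2)$ together with the time-derivative regularity in \eqref{er1} give $(\rho,\rho_t,\rho_x,u,u_x)\in C([0,T_*]\times\mathbb R)$ (the paper records exactly this in \eqref{jichulianxu}), so the equation holds classically. With that remark added, both arguments are complete; the paper's has the advantage of using only norm bounds (no flow map) and of delivering Lipschitz-in-time $L^1$ continuity, while yours delivers exact mass conservation directly.
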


\begin{proof}

First,  one proves  that
\begin{equation}\label{adiqq}
\rho\in L^\infty([0,T_*];L^1).
\end{equation}
 For this purpose,   let $f: \mathbb R^{+}\to \mathbb R$ satisfy
\begin{align*}
f(s)=\begin{cases}
1, & s \in [0,\frac{1}{2}]\\
\text{non-negative polynomial}, & s \in [\frac{1}{2}, 1]\\
e^{-s}, & s \in [1,\infty)
\end{cases}
\end{align*}
such that $f \in C^2$. Then there exists a generic constant $C_* >0$ such that $
|f'(s)| \le C_* f(s)$.
Define, for any $R>0$, $
f_R(x)=f(\frac{|x|}{R})$.
Then, according to Lemma \ref{l2},   one can obtain  that for any given $R>0$,
\begin{align*}
\int \Big((|\rho_t|+|\rho_x u|+|\rho u_x|)f_R(x)+\rho f_R(x)+|\rho u \partial_x f_R(x)|\Big) \text{d}x<& \infty.
\end{align*}
Multiplying $\eqref{e1.1}_1$ by $f_R(x)$ and integrating over $\mathbb R$, one has 
\begin{equation*}
\begin{split}
\frac{\text{d}}{\text{d}t}\int \rho f_R(x)\text{d}x =&-\int (\rho_x u+\rho u_x)f_R(x)\text{d}x\\
=&\int \rho u f'_R(x)\text{d}x\leq C|u|_\infty \int \rho f_R(x)\text{d}x
\end{split}
\end{equation*}
for some positive constant $C$ depending only on the initial data, $A$, $\gamma$, $\delta$, $C_*$  and $T_*$ but independent of $R$, which, along with the Gronwall inequality, implies that 
\begin{equation}
\int \rho f_R(x) \text{d}x\leq C\quad \text{for}\quad  t\in [0,T_*].
\end{equation}
Since $\rho f_R(x)\to \rho$ almost everywhere as $R\to \infty$, then it follows from the Fatou lemma (see Lemma \ref{Fatou} in Appendix A) that 
\begin{equation}
\int \rho\text{d}x\leq \liminf\limits_{R\to \infty}\int \rho f_R(x) \text{d}x\leq C\quad \text{for}\quad  t\in [0,T_*].
\end{equation}

Second,  according to $\eqref{e1.1}_1$,  one has 
\begin{equation}\label{zzx}
|\rho_t|_1\leq  C(|\rho_x u|_1+|\rho u_x|_1)\leq C(|\psi|_2|u|_2|\rho|_\infty^{2-\delta}+|\rho|_2|u_x|_2)\leq C,
\end{equation}
 which,  along with \eqref{adiqq} and  the Sobolev embedding theorem, implies that \eqref{kakkkk} holds.

The proof of Lemma \ref{vsi} is complete.
\end{proof}

\subsection{\textbf{Case 2:} $\delta=1$}

For the case $\delta=1$, the corresponding local-in-time well-posedenss can be achieved by the following three steps.

\subsubsection{\rm{Reformulation}}
Via introducing the following  new variables \begin{equation}\label{bre}
\phi=\frac{A\gamma}{\gamma-1}\rho^{\gamma-1}, \quad  \psi=\frac{1}{\gamma-1}\frac{\phi_x}{\phi}=(\ln\rho)_x,\end{equation}
 the Cauchy problem \eqref{e1.1}-\eqref{e1.3} can be rewritten as 
\begin{equation}\label{e2.1}
\left\{
\begin{aligned}
&\ \phi_t+u\phi_x+(\gamma-1)\phi u_x=0,\\[6pt]
&\ u_t+u u_x+\phi_x-\alpha u_{xx}=\alpha \psi u_x,\\[6pt]
&\ \psi_t+(u\psi)_x+u_{xx}=0,\\[6pt]
&\ (\phi,u,\psi)|_{t=0}=(\phi_0, u_0,\psi_0)\\[6pt]
=&\Big(\frac{A\gamma}{\gamma-1}\rho^{\gamma-1}_0(x),u_0(x),(\ln\rho_0(x))_x\Big)\quad \text{for} \quad x\in \mathbb R,\\[6pt]
&\ (\phi,u,\psi)\to (0,0,0) \quad \text{as}\quad |x|\to \infty\quad \text{for} \quad t\geq 0.
\end{aligned}
\right.
\end{equation}

%

\subsubsection{\rm{Local-in-time existence}}

In \cite{sz3}, for the  2-D degenerate system \eqref{eq:1.1}-\eqref{10000}, via introducing the following new variables  
$$
\tilde\phi=\rho^{\frac{\gamma-1}{2}}, \quad \psi=\frac{2}{\gamma-1}\frac{\nabla \tilde\phi}{\tilde\phi}=\nabla \ln \rho,
$$
Li-Pan-Zhu studied  the following reformulated problem:
\begin{equation}\label{ref2}
\left\{
\begin{aligned}
&\ \tilde{\phi_t}+u\cdot\nabla\tilde\phi+\frac{\gamma-1}{2}\tilde\phi \diver u=0,\\[2pt]
&\ u_t+u\cdot\nabla u+\frac{2A\gamma}{\gamma-1}\tilde\phi \nabla \tilde\phi+Lu=\psi \cdot Q(u),\\[2pt]
&\ \psi_t+\nabla(u\cdot \psi)+\nabla\diver u=0,\\[2pt]
&(\tilde\phi,u,\psi)|_{t=0}=(\tilde\phi_0, u_0,\psi_0)\\[2pt]
=&\Big(\rho^{\frac{\gamma-1}{2}}_0(x),u_0(x),\nabla \ln\rho_0(x)\Big)\quad \text{for} \quad x\in \mathbb R^2,\\[2pt]
&\ (\tilde\phi,u,\psi)\to (0,0,0) \quad \text{as}\quad |x|\to \infty\quad \text{for} \quad t\geq 0.
\end{aligned}
\right.
\end{equation}
Since the mathematical structure of the system considered in problem \eqref{e2.1} is exactly the same as the one in problem  \eqref{ref2}, then based on Lemma \ref{apd2}, one can obtain the following  local-in-time  well-posedness theory in 1-D space.

\begin{lem}\label{l1}
Assume that $\delta=1$ and  $\gamma>1$. If  the initial data $(\rho_0, u_0)$ satisfy \eqref{id1} except $\rho_0\in L^1$,
then there exist a time $T_*>0$ and a unique regular solution $(\rho, u)$ in  $[0,T_*]\times \mathbb R$ to the Cauchy problem \eqref{e1.1}-\eqref{e1.3} satisfying \eqref{er2} with $T$ replaced by $T_*$ except $\rho\in C([0,T_*];L^1)$.
\end{lem}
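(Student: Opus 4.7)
The plan is to mirror the strategy used for the case $0<\delta<1$ in Section 2.1, making the minor modifications forced by the reformulation \eqref{e2.1}. I would work with the unknowns $(\phi,u,\psi)$ of \eqref{bre} rather than $(\rho,u)$ directly, exploiting the fact (emphasized by the authors) that \eqref{e2.1} has exactly the same mathematical structure as the 2-D system \eqref{ref2} treated by Li-Pan-Zhu in \cite{sz3}, whose local theory is recalled as Lemma \ref{apd2}. All the delicate work—in particular the transport-type control of the potentially singular source $\alpha\psi u_x$ appearing in $\eqref{e2.1}_2$—has essentially been carried out there, so my task reduces to verifying that the scheme adapts to one spatial dimension under the data class \eqref{id1}.

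Concretely, for $\eta\in(0,1)$ I would approximate the initial data by $\phi_0^\eta=\phi_0+\eta$ and $\psi_0^\eta=\frac{1}{\gamma-1}(\phi_0^\eta)_x/\phi_0^\eta$ to remove the far-field vacuum, and check, using \eqref{id1}, that $\|\phi_0^\eta-\eta\|_2+\|\psi_0^\eta\|_1+\|u_0\|_2\le c_0$ uniformly in $\eta\in(0,\eta_1]$. Note that no analogue of the compatibility conditions \eqref{e1.8} is required, since the elliptic coefficient in front of $u_{xx}$ is the constant $\alpha$, hence uniformly non-degenerate. For each fixed $\eta>0$, problem \eqref{e2.1} with the perturbed data is a classical symmetric hyperbolic–parabolic system away from vacuum and admits a unique short-time smooth solution $(\phi^\eta,u^\eta,\psi^\eta)$. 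The next step is to derive $\eta$-uniform a priori estimates in the norms dictated by Definition \ref{def1}, using exactly the energy method developed in \cite{sz3}: the transport identity $\eqref{e2.1}_3$ controls $\psi$ in $L^\infty_tH^1$, the parabolic structure of $\eqref{e2.1}_2$ yields $u\in L^\infty_tH^2\cap L^2_tH^3$ along with the desired $u_t$-bounds, and the $\phi$-equation closes the loop. The 1-D setting is in fact strictly easier, since $H^1(\mathbb R)\hookrightarrow L^\infty(\mathbb R)$ trivialises several Gagliardo–Nirenberg arguments that are needed in 2-D.

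Finally, passing to the limit $\eta\to 0^+$ in the standard fashion yields the regular solution on some $[0,T_*]$, and uniqueness follows from a routine $L^2$-difference estimate for $(\phi,u,\psi)$. Reverting to $\rho=\bigl(\tfrac{\gamma-1}{A\gamma}\phi\bigr)^{1/(\gamma-1)}$ and using $\psi=(\ln\rho)_x\in H^1\subset L^\infty$ shows that $\inf_x\rho=0$ is attained only in the far field, as required by Definition \ref{def1}. I expect the only real obstacle to be bookkeeping: verifying that every step of the 2-D argument in \cite{sz3} survives the substitution $\psi=(\ln\rho)_x$ of \eqref{bre} in place of $\psi=\nabla\ln\rho$, and that the resulting lower bound on $T_*$ is independent of $\eta$. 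The additional claim $\rho\in C([0,T_*];L^1)$ when $\rho_0\in L^1$ is deliberately excluded from the present lemma and would be recovered afterwards by precisely the cutoff argument of Lemma \ref{vsi}.
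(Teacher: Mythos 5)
Your proposal is correct and follows essentially the same route as the paper: the paper disposes of this lemma by observing that the reformulated system \eqref{e2.1} has exactly the same mathematical structure as the 2-D system \eqref{ref2} of Li-Pan-Zhu \cite{sz3} and invoking Lemma \ref{apd2}, which is precisely the reduction you describe (including the non-vacuum approximation $\phi_0^\eta=\phi_0+\eta$, the absence of any compatibility condition since the coefficient of $u_{xx}$ is the constant $\alpha$, and the simplification afforded by $H^1(\mathbb R)\hookrightarrow L^\infty(\mathbb R)$). If anything, your write-up supplies more detail than the paper, which leaves the adaptation implicit.
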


\subsubsection{\rm{$\rho\in C([0,T_*];L^1)$}} 
As mentioned in Section \ref{verf}, one  also needs to show  that 
\begin{lem}\label{vsikkk}Let $(\rho, u)$ in $[0,T_*]\times \mathbb R$  be the solution  to the Cauchy problem \eqref{e1.1}-\eqref{e1.3} obtained in Lemma 2.3. Then 
\begin{equation}\label{kakkkkmmm}\rho\in C([0,T_*];L^1) \quad \text{if} \quad \rho_0\in L^1 \quad \text{additionally}. 
\end{equation}
\end{lem}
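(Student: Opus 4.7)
The plan is to mimic the argument for Lemma \ref{vsi} essentially verbatim, replacing only the place where the definition of $\psi$ enters (now $\psi=(\ln\rho)_x$ rather than $\psi=\frac{\delta}{\delta-1}(\rho^{\delta-1})_x$). The two ingredients are a uniform-in-$R$ cut-off argument to propagate the $L^1$ bound forward in time, followed by a direct bound on $|\rho_t|_1$ from the continuity equation to upgrade to time continuity.

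First I would introduce the same smooth cut-off $f_R(x)=f(|x|/R)$ with $f$ equal to $1$ on $[0,1/2]$, $e^{-s}$ on $[1,\infty)$, and a non-negative polynomial in between chosen so that $f\in C^2$ and $|f'(s)|\le C_* f(s)$. Since the solution from Lemma \ref{l1} provides $\rho,\rho_t,\rho_x u,\rho u_x\in L^\infty([0,T_*];L^1_{loc})$ by the regularity in \eqref{er2} together with $\phi\in H^2$ and $\psi\in H^1$, multiplying the continuity equation by $f_R$ and integrating by parts gives
\begin{equation*}
\frac{\mathrm{d}}{\mathrm{d}t}\int \rho\, f_R(x)\,\mathrm{d}x=\int \rho u\, f_R'(x)\,\mathrm{d}x\le \frac{C_*}{R}|u|_{\infty}\int \rho\, f_R(x)\,\mathrm{d}x,
\end{equation*}
so Gronwall yields a bound on $\int \rho f_R\,\mathrm{d}x$ which is uniform in $R\in(0,\infty)$ and in $t\in[0,T_*]$; passing $R\to\infty$ via Fatou's lemma then gives $\rho\in L^\infty([0,T_*];L^1)$.

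Next, to obtain $C([0,T_*];L^1)$, I would bound $|\rho_t|_1$ directly from $\rho_t=-(\rho u)_x=-\rho_x u-\rho u_x$. The only new point compared with Lemma \ref{vsi} is that, for $\delta=1$, one has $\rho_x=\rho\psi$ from the definition $\psi=(\ln\rho)_x$ in \eqref{bre}. Using $\phi\in H^2\hookrightarrow L^\infty$ and hence $\rho\in L^\infty$, together with the just established $\rho\in L^\infty([0,T_*];L^1)$, one interpolates to get $\rho\in L^\infty([0,T_*];L^p)$ for any $p\in[1,\infty]$; combining this with $\psi\in L^\infty([0,T_*];L^2)$ and $u\in L^\infty([0,T_*];H^2)$ yields
\begin{equation*}
|\rho_t|_1\le |\rho\psi u|_1+|\rho u_x|_1\le |u|_\infty|\rho|_2|\psi|_2+|\rho|_2|u_x|_2\le C
\end{equation*}
uniformly on $[0,T_*]$. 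Therefore $\rho$ is Lipschitz from $[0,T_*]$ into $L^1$, in particular continuous, which proves \eqref{kakkkkmmm}.

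I do not expect any real obstacle here; the only thing to double-check is that every term appearing in the integration-by-parts identity and in the bound for $|\rho_t|_1$ is actually integrable for the regular solution produced by Lemma \ref{l1}, so that no distributional subtlety arises. All such membership questions are handled by the regularity class \eqref{er2} for $\delta=1$ together with the Sobolev embeddings on $\mathbb{R}$, exactly as in the $0<\delta<1$ case treated in Lemma \ref{vsi}.
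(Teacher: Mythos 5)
Your proposal is correct and is essentially the paper's own argument: the paper omits the proof of this lemma, stating only that it is similar to that of Lemma \ref{vsi}, and you carry out precisely that adaptation (cut-off $f_R$ plus Gronwall plus Fatou for the uniform $L^1$ bound, then a direct $|\rho_t|_1$ estimate for time continuity). Your only substantive modification — replacing $\rho_x=\tfrac{1}{\delta}\rho^{2-\delta}\psi$ by $\rho_x=\rho\psi$ for $\delta=1$ in the bound $|\rho_x u|_1\le |u|_\infty|\rho|_2|\psi|_2$ — is exactly the adjustment required.
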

The proof of this lemma is similar to that of  Lemma \ref{vsi}. Here we omit it.
\section{Global-in-time well-posedness}

This section is devoted  to proving Theorems \ref{th2}-\ref{th1}, and we always  assume  $0<\delta\le1$ and  $\gamma>1$.
 Let $T>0$ be some time and $(\rho, u)$ be  regular solutions to the  problem \eqref{e1.1}-\eqref{e1.3} in $[0,T]\times \mathbb R$ obtained in Lemmass \ref{l2}-\ref{vsi} and Lemmas \ref{l1}-\ref{vsikkk}.
The main aim in the rest of this section is to  establish the global-in-time  a priori estimates for these solutions.   Hereinafter, we denote  $C_0$ {\rm (resp. $C$)} a  generic positive constant depending only on $(\rho_0,u_0, A, \gamma,\alpha,\delta)$  {\rm (resp. $(C_0,T)$)}, which may be different  from line to line. 
\par\smallskip


\subsection{The $L^\infty$ estimate of  $\rho$}
We consider the upper bound of the mass density $\rho$ in $[0,T]\times \mathbb R$.
First, the standard energy estimates yields that
\begin{lem}\label{l4.1}
For any $T>0$, it holds that
\begin{equation*}\label{e-1.1}
\int \left(\frac12\rho u^2+\frac{A}{\gamma-1}\rho^\gamma\right)(t,\cdot){\rm{d}}x+\alpha\int_0^t\int \rho^{\delta} u_x^2 {\rm{d}}x{\rm{d}}s\leq C_0 \quad {\rm{for}}\ \ 0\leq t\leq T.
\end{equation*}
\end{lem}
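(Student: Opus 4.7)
The plan is to derive the standard energy identity for system \eqref{e1.1} by testing the momentum equation against $u$ and combining it with an ODE for the pressure obtained from the continuity equation. Then integration in time on $[0,t]$ gives the bound with $C_0$ depending only on the initial energy.

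First I would multiply $\eqref{e1.1}_2$ by $u$ and use $\eqref{e1.1}_1$ to rewrite the convective terms in conservative form, yielding the pointwise kinetic energy identity
\begin{equation*}
\partial_t\Bigl(\tfrac12\rho u^2\Bigr)+\partial_x\Bigl(\tfrac12\rho u^3\Bigr)+u P(\rho)_x=\bigl(\mu(\rho)u_x u\bigr)_x-\mu(\rho)u_x^2.
\end{equation*}
Next, using $P=A\rho^\gamma$ and $\eqref{e1.1}_1$, a direct computation shows
\begin{equation*}
\partial_t\Bigl(\tfrac{P}{\gamma-1}\Bigr)+\partial_x\Bigl(\tfrac{uP}{\gamma-1}\Bigr)+P u_x=0,
\end{equation*}
and, after integration against $P u_x=-u P_x$ (up to a flux) plus the kinetic identity, spatial integration over $\mathbb{R}$ kills the flux terms and leaves
\begin{equation*}
\frac{\mathrm{d}}{\mathrm{d}t}\int\Bigl(\tfrac12\rho u^2+\tfrac{A}{\gamma-1}\rho^\gamma\Bigr)\mathrm{d}x+\alpha\int\rho^\delta u_x^2\,\mathrm{d}x=0.
\end{equation*}
Integrating in time from $0$ to $t\le T$ then yields the desired bound with $C_0=\int\bigl(\tfrac12\rho_0 u_0^2+\tfrac{A}{\gamma-1}\rho_0^\gamma\bigr)\mathrm{d}x$, which is finite thanks to $\rho_0\in L^1$, $\rho_0^{\gamma-1}\in H^2\hookrightarrow L^\infty$ and $u_0\in H^2$.

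The main (but mild) technical obstacle is justifying that the boundary terms at $|x|\to\infty$ arising from the integrations by parts actually vanish in this unbounded-domain, far-field-vacuum setting. For this I would appeal to the regularity bundled into Definitions \ref{def2}-\ref{def1}: the products $\rho u^3$, $uP$ and $\mu(\rho)u_x u=\alpha\rho^\delta u_x u$ all decay at infinity because $\rho\to 0$ in the far field (controlled by $\rho^{\gamma-1}\in C([0,T];H^2)$), while $u\in C([0,T];H^2)\hookrightarrow C([0,T];C_b)$ and $\rho^{\frac{\delta-1}{2}}u_x\in C([0,T];L^2)$ (resp.\ $u_x\in C([0,T];H^1)$ for $\delta=1$) ensure $\rho^\delta u_x u$ is integrable with vanishing traces. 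A standard cut-off argument (multiply by $f_R(x)$ as in the proof of Lemma \ref{vsi}, and send $R\to\infty$ via dominated convergence) makes the vanishing of the boundary contributions fully rigorous, so the formal energy identity above is the true identity, and Lemma \ref{l4.1} follows.
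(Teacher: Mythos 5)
Your proposal is correct and follows essentially the same route as the paper's proof in Appendix B: multiply the momentum equation by $u$, combine with the renormalized continuity equation for $P/(\gamma-1)$, integrate in space so that the flux terms vanish (the paper justifies this by noting $\rho u^3,\ \rho^\gamma u,\ \rho^\delta u_x u\in W^{1,1}(\mathbb R)$, which is the same point your cut-off argument makes rigorous), and then integrate in time. No gaps.
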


Second, we give the well-known B-D entropy estimate.
\begin{lem}[\cite{bd}]\label{l4.2}
For any $T>0$, it holds that
\begin{equation*}\label{e-1.2}
\int\left(\frac12\rho\left|u+\varphi(\rho)_x\right|^2+\frac{A}{\gamma-1}\rho^\gamma\right)(t,\cdot){\rm{d}}x+\alpha A\gamma\int_0^t\int \rho^{\gamma+\delta-3}\rho_x^2{\rm{d}}x{\rm{d}}s\leq C_0
\end{equation*}
for $ 0\leq t\leq T$, where $\varphi'(\rho)=\frac{\mu(\rho)}{\rho^2}$.
\end{lem}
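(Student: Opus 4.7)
The plan is to implement the classical Bresch--Desjardins argument built around the effective velocity $v=u+\varphi(\rho)_x$. With $\mu(\rho)=\alpha\rho^\delta$ one has $\varphi'(\rho)=\alpha\rho^{\delta-2}$ and $\varphi(\rho)_x=\alpha\rho^{\delta-2}\rho_x$, and the algebraic identity $\mu(\rho)=\rho^2\varphi'(\rho)$ is the engine behind all cancellations. First I would differentiate the continuity equation $\eqref{e1.1}$ in $x$, multiply by $\varphi'(\rho)$, and combine with the transport identity $(\varphi(\rho))_t+u(\varphi(\rho))_x+\rho\varphi'(\rho)u_x=0$ (itself immediate from mass conservation) to derive an evolution equation for $\varphi(\rho)_x$. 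Adding it to the momentum equation yields an equation for $\rho v$, in which the second-order viscous terms reorganize cleanly thanks to $\mu(\rho)=\rho^2\varphi'(\rho)$.

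Second, I would test this $\rho v$-equation against $v$ and integrate in $x$. The transport part yields $\tfrac{d}{dt}\int\tfrac12\rho v^2\, dx$ by the continuity equation, while the pressure term splits as $\int P_x v\, dx=\int P_x u\, dx+\int P_x\varphi(\rho)_x\, dx$. The first piece is standard and contributes $\tfrac{d}{dt}\int\tfrac{A}{\gamma-1}\rho^\gamma\, dx$ after using the mass equation (exactly as in Lemma \ref{l4.1}). The second piece, which is the whole reason for introducing $v$, evaluates directly to
\begin{equation*}
\int P_x\varphi(\rho)_x\, dx=A\gamma\alpha\int\rho^{\gamma-1}\rho_x\cdot\rho^{\delta-2}\rho_x\, dx=A\gamma\alpha\int\rho^{\gamma+\delta-3}\rho_x^2\, dx,
\end{equation*}
which is exactly the claimed dissipation. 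The remaining right-hand side viscous contributions, after integration by parts and repeated use of $\mu(\rho)=\rho^2\varphi'(\rho)$, should reduce to a non-positive quantity modulo the standard dissipation $\alpha\int\rho^\delta u_x^2\, dx$ already controlled in Lemma \ref{l4.1}. Integrating in time from $0$ to $t$ and noting that the initial quantity $\int\rho_0|u_0+\varphi(\rho_0)_x|^2\, dx$ is finite---since $\rho_0^{\gamma-1}\in H^2\hookrightarrow L^\infty$, $u_0\in H^2$, and the identity $\rho_0|\varphi(\rho_0)_x|^2=(\alpha/\delta)^2\rho_0\psi_0^2$ (with $\psi_0\in L^2$ supplied by $(\rho_0^{\delta-1})_x\in H^1$, or by $(\ln\rho_0)_x\in H^1$ when $\delta=1$) makes it integrable---yields the asserted bound by $C_0$.

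The main obstacle will be the rigorous justification of these manipulations in the presence of far-field vacuum, where the individual quantities $\rho^{\delta-2}\rho_x$ and $\rho^{\gamma+\delta-3}\rho_x^2$ need not belong to $L^1$ in $x$ without the exact $\rho$-weights supplied by the correct groupings, and the integration-by-parts boundary terms at infinity must be shown to vanish. I would handle this by introducing a spatial cutoff $f_R$ as in the proof of Lemma \ref{vsi}, carrying out the energy identity for the truncated quantities (where everything is classical thanks to the regularities listed in Definition \ref{def2} for $0<\delta<1$, respectively Definition \ref{def1} for $\delta=1$), and then passing to the limit $R\to\infty$ using Fatou's lemma on the nonnegative dissipation integrand together with dominated convergence on the remaining energy terms, the decay of $\rho$ at infinity ensuring that all cutoff-commutator errors vanish.
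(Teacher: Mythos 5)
Your proposal follows exactly the paper's argument (Appendix B): derive the evolution equation for $\varphi(\rho)_x$ from the continuity equation, add it to the momentum equation so that the identity $\mu(\rho)=\rho^2\varphi'(\rho)$ makes the viscous terms cancel \emph{completely} (leaving the pure transport equation $\rho(v_t+uv_x)+P(\rho)_x=0$, so there is in fact no leftover viscous contribution to estimate), then test against $v$ and read off the dissipation $\alpha A\gamma\int\rho^{\gamma+\delta-3}\rho_x^2$ from $\int P_x\varphi(\rho)_x\,{\rm d}x$. This matches the paper's proof; your extra remarks on the finiteness of the initial BD entropy and on justifying the integrations by parts are correct but go beyond what the paper records.
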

The conclusions obtained in Lemmas \ref{l4.1}-\ref{l4.2} are classical, and the corresponding  proofs can be found in Appendix B.

Next we show the regular solution $(\rho,u)$ keeps the conservation of total mass.
\begin{lem}\label{ms}
For any $T>0$, it holds that
$$m(t)=m(0)  \quad {\rm{for}}\ \ 0\leq t\leq T.$$
\end{lem}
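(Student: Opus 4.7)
The plan is to extend the already-established $L^1$ bound from Lemma 2.2 into the conservation identity on $[0,T]$ by testing the continuity equation $\rho_t+(\rho u)_x=0$ against a family of cutoff functions and passing to the limit. I would reuse the cutoff $f_R(x)=f(|x|/R)$ constructed in the proof of Lemma 2.2, which satisfies $0\le f_R\le 1$, $f_R\to 1$ pointwise (monotonically in $R$, after choosing the transition polynomial on $[1/2,1]$ to be decreasing), and the pointwise bound $|f_R'(x)|\le (C_*/R)\,f_R(x)$.

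First I would multiply $\eqref{e1.1}_1$ by $f_R$, integrate over $\mathbb{R}$, and integrate by parts (the required integrability is exactly what was verified in the proof of Lemma 2.2, extended to $[0,T]$ by the regularity statements \eqref{er1}, \eqref{er2}), producing
$$
\frac{d}{dt}\int \rho\, f_R\, dx=\int \rho u\, f_R'(x)\, dx.
$$
Next, using $|f_R'|\le (C_*/R)f_R$ and Cauchy-Schwarz,
$$
\left|\int \rho u\, f_R'\, dx\right|\le \frac{C_*}{R}\int \sqrt{\rho}\,\big(\sqrt{\rho}\,|u|\big)\, f_R\, dx\le \frac{C_*}{R}\Big(\int \rho u^2\, dx\Big)^{1/2}\Big(\int \rho\, f_R^2\, dx\Big)^{1/2}.
$$
The first factor is bounded by $C_0^{1/2}$ by Lemma \ref{l4.1}, and $f_R^2\le f_R$ gives the second factor $\le y_R(t)^{1/2}$, where $y_R(t):=\int \rho(t,x) f_R(x)\, dx$. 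Hence
$$
|y_R'(t)|\le \frac{C}{R}\sqrt{y_R(t)},
$$
which I would rewrite as $\bigl|\bigl(\sqrt{y_R}\bigr)'(t)\bigr|\le C/(2R)$ and integrate to obtain $\bigl|\sqrt{y_R(t)}-\sqrt{y_R(0)}\bigr|\le Ct/(2R)$.

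Finally I would send $R\to\infty$. Since $f_R\uparrow 1$ pointwise and $\rho_0\in L^1$ (so, by Lemma 2.2 extended to $[0,T]$, $\rho(t,\cdot)\in L^1$ as well), monotone convergence gives $y_R(t)\to\int\rho(t,x)\,dx$ and $y_R(0)\to\int\rho_0\,dx=m(0)$. Passing to the limit in the inequality above eliminates the $Ct/R$ term and yields $\sqrt{m(t)}=\sqrt{m(0)}$, i.e.\ $m(t)=m(0)$, as claimed. The argument is essentially the Lemma 2.2 computation sharpened into an equality, so no new obstacle arises; the only points to be careful about are the monotonicity of $f_R$ in $R$ (to use monotone convergence rather than dominated convergence, which would require a uniform integrable majorant), and that the energy bound $\int \rho u^2\le C_0$ on all of $[0,T]$ is already in hand through Lemma \ref{l4.1}.
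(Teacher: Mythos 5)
Your proof is correct, but it takes a genuinely different route from the paper's. The paper's proof is a one-liner: it integrates $\eqref{e1.1}_1$ directly over $\mathbb{R}$ and observes that $\int (\rho u)_x\,\mathrm{d}x=0$ because $\rho u(t,\cdot)\in W^{1,1}(\mathbb{R})$ (so the flux has vanishing limits at $\pm\infty$), with $\rho\in C([0,T];L^1)$ justifying the differentiation under the integral. You instead sharpen the cutoff computation of Lemma \ref{vsi} into an equality by exploiting the $1/R$ gain in $|f_R'|\le (C_*/R)f_R$ that the paper's Lemma \ref{vsi} discards: combined with Cauchy--Schwarz and the energy bound $\int\rho u^2\,\mathrm{d}x\le C_0$ of Lemma \ref{l4.1} (which is proved independently, so there is no circularity), this yields $|(\sqrt{y_R})'|\le C/(2R)$ and hence conservation in the limit $R\to\infty$. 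What each approach buys: the paper's argument is shorter but requires verifying the $W^{1,1}$ membership of the flux, which leans on the full regularity of the regular solution ($(\rho^{\delta-1})_x\in L^\infty$, etc.); yours needs only the finite energy and the $L^1$ control of $\rho$, is closer in spirit to a weak-solution argument, and reproves $m(t)<\infty$ along the way. Your two points of care are handled correctly — monotone convergence is legitimate with the decreasing transition polynomial (though dominated convergence with majorant $\rho(t,\cdot)\in L^1$ from Lemma \ref{vsi} would also do), and $y_R>0$ (needed to differentiate $\sqrt{y_R}$) holds since $\rho>0$ pointwise.
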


\begin{proof}
It follows from $\eqref{e1.1}_1$ that 
\begin{equation*}
    \frac{\text{d}}{\text{d}t}\int \rho(t,\cdot)\text{d}x=-\int (\rho u)_x(t,\cdot)\text{d}x=0,
\end{equation*}
where one has used fact that 
$$\rho\in C([0,T];L^1)\quad \text{and}\quad \rho u(t,\cdot)\in W^{1,1}(\mathbb R).$$
The proof of Lemma \ref{ms} is complete.
\end{proof}

Now we are ready to give the uniform  upper bound of the density.

\begin{lem}\label{l4.3}
For any $T>0$, it holds that
\begin{equation*}\label{e4.3}
|\rho(t,\cdot)|_\infty\leq C_0 \quad {\rm{for}}\ \ 0\leq t\leq T.
\end{equation*}
\end{lem}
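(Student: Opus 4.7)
The plan is to combine the basic energy estimate (Lemma \ref{l4.1}), the Bresch--Desjardins entropy estimate (Lemma \ref{l4.2}), and a one-dimensional Gagliardo--Nirenberg type interpolation applied to $\rho^{\gamma/2}$ so as to derive a quasi-algebraic inequality that closes on $|\rho|_\infty$.

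First, I would extract from Lemmas \ref{l4.1}--\ref{l4.2} a uniform-in-time bound on $\int \rho\,\varphi(\rho)_x^2\,dx$. Using the elementary inequality $\varphi(\rho)_x^2 \leq 2(u+\varphi(\rho)_x)^2 + 2u^2$, one obtains
\begin{equation*}
\int \rho\,\varphi(\rho)_x^2\,dx \leq 2\int \rho\,(u+\varphi(\rho)_x)^2\,dx + 2\int \rho u^2\,dx \leq C_0.
\end{equation*}
Since $\varphi'(\rho)=\mu(\rho)/\rho^2=\alpha\rho^{\delta-2}$, this is precisely
\begin{equation*}
\int \rho^{2\delta-3}\rho_x^2\,dx \leq C_0 \qquad \text{for all } t\in[0,T].
\end{equation*}

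Next, set $\eta := \rho^{\gamma/2}$. From the definition of regular solution and the far field vacuum condition, $\eta\to 0$ as $|x|\to \infty$ and $\eta \in H^1(\mathbb R)$ for each fixed $t$. The energy estimate yields $|\eta|_2^2 = \int \rho^\gamma\,dx \leq C_0$. For the derivative, I would use the pointwise identity
\begin{equation*}
\rho^{\gamma-2}\rho_x^2 = \rho^{\gamma+1-2\delta}\cdot \rho^{2\delta-3}\rho_x^2,
\end{equation*}
and exploit $\gamma+1-2\delta>0$ (which holds automatically since $\gamma>1$ and $\delta\le 1$) to bound
\begin{equation*}
|\eta_x|_2^2 = \frac{\gamma^2}{4}\int \rho^{\gamma-2}\rho_x^2\,dx \leq \frac{\gamma^2}{4}|\rho|_\infty^{\gamma+1-2\delta}\int \rho^{2\delta-3}\rho_x^2\,dx \leq C_0\,|\rho|_\infty^{\gamma+1-2\delta}.
\end{equation*}

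Finally, the one-dimensional Sobolev inequality $|\eta|_\infty^2 \leq 2|\eta|_2|\eta_x|_2$ produces
\begin{equation*}
|\rho|_\infty^\gamma = |\eta|_\infty^2 \leq C_0\,|\rho|_\infty^{(\gamma+1-2\delta)/2},
\end{equation*}
which rearranges to $|\rho|_\infty^{(\gamma-1+2\delta)/2} \leq C_0$. Since $\gamma-1+2\delta>0$, this yields the claim $|\rho|_\infty\leq C_0$. The subtle point is really just the choice of test exponent: $\rho^{\gamma/2}$ is forced by the energy control of $\int \rho^\gamma$, and the weighted H\"older splitting of $\rho^{\gamma-2}\rho_x^2$ is what converts the B-D estimate into an $L^2$ bound on $\eta_x$ at the price of a power of $|\rho|_\infty$ strictly smaller than $\gamma$. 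Without this sublinearity the inequality would not close.
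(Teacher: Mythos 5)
Your proof is correct, but it is a genuinely different argument from the one in the paper. The paper integrates the momentum equation $\eqref{e1.1}_2$ over $(-\infty,x]$ to obtain $\xi_t+u\xi_x+\alpha\rho^{\delta-1}(\rho_t+u\rho_x)+A\rho^\gamma=0$ with $\xi=\int_{-\infty}^x\rho u\,{\rm d}z$, deduces that $\xi+\frac{\alpha}{\delta}\rho^{\delta}$ is non-increasing along particle trajectories, and bounds $|\xi|\leq|\sqrt{\rho}u|_2|\rho|_1^{1/2}$; this is a Lagrangian, pointwise argument that relies on the energy estimate together with conservation of mass (Lemma \ref{ms}) but makes no use of the B--D entropy. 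You instead work entirely in Eulerian variables: the B--D estimate of Lemma \ref{l4.2} combined with Lemma \ref{l4.1} gives $\int\rho^{2\delta-3}\rho_x^2\,{\rm d}x\leq C_0$, the splitting $\rho^{\gamma-2}\rho_x^2=\rho^{\gamma+1-2\delta}\cdot\rho^{2\delta-3}\rho_x^2$ (valid since $\gamma+1-2\delta>0$ for $\gamma>1$, $\delta\leq1$) turns this into $|(\rho^{\gamma/2})_x|_2^2\leq C_0|\rho|_\infty^{\gamma+1-2\delta}$, and the 1-D Agmon inequality closes the estimate because the resulting exponent $\tfrac{\gamma+1-2\delta}{2}$ is strictly smaller than $\gamma$. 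Both routes are sound under the standing hypotheses; yours trades the use of $\rho\in L^\infty([0,T];L^1)$ for the use of the B--D entropy, and avoids characteristics altogether, while the paper's version additionally yields the pointwise monotonicity of $\xi+\frac{\alpha}{\delta}\rho^\delta$ along particle paths. One small point worth making explicit in your write-up: the final rearrangement $|\rho|_\infty^{(\gamma-1+2\delta)/2}\leq C_0$ presupposes $|\rho(t,\cdot)|_\infty<\infty$ for each fixed $t$, which is guaranteed by $\rho^{\gamma-1}\in C([0,T];H^2)\hookrightarrow L^\infty$ in the definition of regular solutions, so the algebra is licit.
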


\begin{proof}

First, integrating the  equation $\eqref{e1.1}_2$ over $(-\infty, x]$ with respect to $x$, one has 
\begin{equation}\label{cc}
\frac{\text{d}}{\text{d}t}\int_{-\infty}^x \rho u(t, z){\rm{d}}z+\rho u^2+A\rho^\gamma=\alpha\rho^\delta u_{x},
\end{equation}
where one has used the far field behavior $\eqref{e1.3}_2$. 

Denote $\displaystyle \xi(t, x)=\int_{-\infty}^x \rho u(t, z){\rm{d}}z$. Then it follows from  \eqref{cc} that  
\begin{equation}\label{e-1.7}
\xi_t+u\xi_x+\alpha\rho^{\delta-1}(\rho_t+u\rho_x) +A\rho^\gamma=0.
\end{equation}

Second, let  $y(t;x)$ be the solution of the following problem
\begin{equation}
\left\{
\begin{aligned}
&\frac{\text{d}y(t; x)}{\text{d}t}=u(t, y(t;x)),\\
& y(0; x)=x.
\end{aligned}
\right.
\end{equation}
It follows from \eqref{e-1.7} that 
\begin{equation*}
\frac{\text{d}}{\text{d}t}\left(\xi+\frac{\alpha}{\delta}\rho^{\delta}\right)(t, y(t; x))\leq 0,
\end{equation*}
which, along with the fact 
\begin{equation}\label{gg}\xi(t, y(t;x))\leq |\sqrt\rho u(t,\cdot)|_{2}|\rho(t,\cdot)|_{1}^{\frac12}\leq C_0\quad {\rm{for}}\ \ 0\leq t\leq T,
\end{equation}
implies that 
\begin{equation}\label{gg1}
\left(\xi+\frac{\alpha}{\delta}\rho^{\delta}\right)(t, y(t; x))\leq \xi(0, y(0; x))+
\frac{\alpha}{\delta}\rho_0^{\delta}(x)\leq C_0.
\end{equation}

Finally,  according to \eqref{gg}-\eqref{gg1}, one can obtain that 
\begin{equation*}
|\rho^{\delta}(t,\cdot)|_\infty\leq C_0 \quad {\rm{for}}\ \ 0\leq t\leq T.
\end{equation*}
The proof of Lemma \ref{l4.3} is complete.

\end{proof}

\subsection{The $L^2$ estimate of $u$}
We consider the $L^2$ estimate of $u$. For this purpose,  one first needs to show  the following several  auxiliary lemmas. The first one is on the $L^q$ estimate of  $\rho^{\frac1q}u$ for any  $2\le q< \infty$.



\begin{lem} \label{lma}
Assume  $2\le q< \infty$. Then for any  $T>0$, it holds that 
\begin{equation*}
|\rho^{\frac{1}{q}} u(t, \cdot)|_q \leq C \quad {\rm{for}}\ \ 0\leq t\leq T,
\end{equation*}
where the constant $C$ depends on $q$.
\end{lem}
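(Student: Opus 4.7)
My plan is to derive an evolution identity for $\int \rho|u|^q\,dx$ from the system \eqref{e1.1} and close a Gronwall-type argument using the bounds already obtained on $\rho$. Multiplying the momentum equation $\eqref{e1.1}_2$ by $q|u|^{q-2}u$ and using $\eqref{e1.1}_1$ to rewrite $\rho(u_t+uu_x)$ as $(\rho u)_t+(\rho u^2)_x$, I would obtain the conservation-form identity
\begin{equation*}
(\rho|u|^q)_t + (\rho u|u|^q)_x + q|u|^{q-2}u\, P(\rho)_x = q|u|^{q-2}u\,(\mu(\rho)u_x)_x.
\end{equation*}
Integrating over $\mathbb R$---the boundary terms vanish by the regularity $u\in C([0,T];H^2)$ from Lemma \ref{l2}/\ref{l1} together with $\rho\to 0$ at infinity---and integrating by parts in the pressure and viscous terms yields
\begin{equation*}
\frac{d}{dt}\int \rho|u|^q\,dx + q(q-1)\int |u|^{q-2}\mu(\rho) u_x^2\,dx = q(q-1)\int |u|^{q-2}u_x P(\rho)\,dx.
\end{equation*}

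Next, I would apply Cauchy's inequality to split the right-hand side, absorb half the viscous dissipation into the left, and reduce the task to estimating
\begin{equation*}
\int |u|^{q-2}\frac{P(\rho)^2}{\mu(\rho)}\,dx = \frac{A^2}{\alpha}\int |u|^{q-2}\rho^{2\gamma-\delta}\,dx.
\end{equation*}
The idea here is to rewrite $|u|^{q-2}=(\rho|u|^q)^{(q-2)/q}\,\rho^{-(q-2)/q}$ and apply H\"older with conjugate exponents $q/(q-2)$ and $q/2$, giving
\begin{equation*}
\int |u|^{q-2}\rho^{2\gamma-\delta}\,dx \le \Big(\int \rho|u|^q\,dx\Big)^{(q-2)/q}\Big(\int \rho^{\,q(\gamma-\frac{\delta+1}{2})+1}\,dx\Big)^{2/q}.
\end{equation*}
Since $0<\delta\le 1$ and $\gamma>1$, the exponent on $\rho$ in the second factor is $\geq 1$, so the pointwise bound $\rho^r\le |\rho|_\infty^{r-1}\rho$ combined with Lemmas \ref{l4.3} and \ref{ms} delivers a uniform bound depending only on $C_0$ and $q$.

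Setting $E_q(t):=\int \rho |u|^q\,dx$, the previous steps reduce to the scalar differential inequality $E_q'\le C_q\, E_q^{(q-2)/q}$. For $q=2$ this is trivially integrable (and already contained in Lemma \ref{l4.1}); for $q>2$, substituting $F:=E_q^{2/q}$ linearizes it to $F'\le 2C_q/q$, so $E_q(t)\le\big(E_q(0)^{2/q}+2C_qt/q\big)^{q/2}$ on $[0,T]$. The initial value is finite because $\rho_0\in L^1\cap L^\infty$ and $u_0\in H^2\hookrightarrow L^\infty$.

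I expect the main (rather mild) obstacle to be verifying that the exponent $q(\gamma-(\delta+1)/2)+1$ on $\rho$ in the second H\"older factor is large enough to be controlled by mass conservation together with the $L^\infty$ bound on $\rho$; this works precisely because $\gamma\ge(\delta+1)/2$ under the standing hypotheses $0<\delta\le 1$ and $\gamma>1$. Aside from this book-keeping point, the proof is a standard weighted $L^q$ energy estimate adapted to the degenerate viscosity $\mu(\rho)=\alpha\rho^\delta$.
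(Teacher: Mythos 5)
Your proposal is correct and follows essentially the same route as the paper: both multiply the momentum equation by $|u|^{q-2}u$, absorb the pressure work into the degenerate viscous dissipation via Cauchy's inequality, control the remaining term $\int\rho^{2\gamma-\delta}|u|^{q-2}\,{\rm d}x$ by H\"older together with the $L^1\cap L^\infty$ bounds on $\rho$ from Lemmas \ref{ms} and \ref{l4.3} (using $\gamma\ge\tfrac{\delta+1}{2}$), and close with a Gronwall-type argument. The only differences are cosmetic: your H\"older split yields the sublinear inequality $E_q'\le C E_q^{(q-2)/q}$ solved explicitly, whereas the paper works with $p=q-2\ge 2$ and a standard Gronwall bound.
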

\begin{proof}
First, multiplying $\eqref{e1.1}_2$ by $|u|^{p}u$ $(p\ge2)$ and integrating over $\mathbb R$, one has 
\begin{equation}\label{add}
\begin{split}
&\frac{1}{p+2} \frac{\text{d}}{\text{d}t}\big|\rho^{\frac{1}{p+2}} u\big|_{p+2}^{p+2}  +\alpha (p+1) \big|\rho^{\frac{\delta}{2}}u^{\frac{p}{2}} u_x\big|_2^2\\
=& A(p+1)\int  \rho^{\gamma} |u|^p u_x{\rm{d}}x\\
\le & \frac{\alpha(p+1)}{2} \int \rho^{\delta}|u|^p u_x^2{\rm{d}}x+\frac{A^2(p+1)}{2\alpha} \int \rho^{2\gamma-\delta} |u|^p{\rm{d}}x.
\end{split}
\end{equation}

When $p=2$, it follows from \eqref{add}, Lemmas \ref{l4.1} and \ref{ms}-\ref{l4.3} that
\begin{equation}\label{addd}
\begin{split}
&\frac14 \frac{\text{d}}{\text{d}t}\big|\rho^{\frac14} u\big|_{4}^{4}  +\frac{3\alpha}{2} \big|\rho^{\frac{\delta}{2}}u u_x\big|_2^2\\
\le & C|\rho|_\infty^{2\gamma-\delta-1}|\rho^{\frac14} u|_4|\rho^{\frac12} u|_2|\rho|_1^{\frac14}\leq C(1+|\rho^{\frac14} u|_4^4).
\end{split}
\end{equation}

When $p>2$, it follows from \eqref{add}, Lemmas \ref{l4.1} and \ref{ms}-\ref{l4.3} that
\begin{equation}\label{adddd}
\begin{split}
&\frac{1}{p+2} \frac{\text{d}}{\text{d}t}\big|\rho^{\frac{1}{p+2}} u\big|_{p+2}^{p+2}  +\frac{\alpha(p+1)}{2} \big|\rho^{\frac{\delta}{2}}u^{\frac{p}{2}} u_x\big|_2^2\\
\le& C|\rho|_\infty^{2\gamma-\delta-1}|\rho^{\frac{p-2}{p}}u^{\frac{(p-2)(p+2)}{p}}|_{\frac{p}{p-2}}|\rho^{\frac 2p}u^{\frac 4p}|_{\frac p2}\\
\le &C | \rho|_{\infty}^{2\gamma-\delta-1} |\rho^{\frac{1}{p+2}} u|_{p+2}^{\frac{(p-2)(p+2)}{p}} |\rho^{\frac12} u |_2^{\frac{4}{p}}\\
\le & C |\rho^{\frac{1}{p+2}} u|_{p+2}^{\frac{(p-2)(p+2)}{p}} 
\le  C\big(1+ |\rho^{\frac{1}{p+2}} u|_{p+2}^{p+2}\big).
\end{split}
\end{equation}

Second, integrating \eqref{addd} and \eqref{adddd} over $[0,t]$, one can obtain that  for any $p\ge 2$,  
\begin{equation*}
\begin{split}
&\big|\rho^{\frac{1}{p+2}} u\big|_{p+2}^{p+2} +\int_0^t \big|\rho^{\frac{\delta}{2}}u^{\frac{p}{2}} u_x\big|_2^2\text{d}s\\
\leq &\big|\rho_0^{\frac{1}{p+2}} u_0\big|_{p+2}^{p+2}+C\Big(t+\int_0^t \big|\rho^{\frac{1}{p+2}} u\big|_{p+2}^{p+2}\text{d}s\Big),
\end{split}
\end{equation*}
which, along with the Gronwall inequality, yields that
\begin{equation}\label{polo}
\begin{split}
&\big|\rho^{\frac{1}{p+2}} u(t,\cdot)\big|_{p+2}^{p+2} +\int_0^t \big|\rho^{\frac{\delta}{2}}u^{\frac{p}{2}} u_x(s,\cdot)\big|_2^2 {\rm{d}s} \\
\le&C \Big(1+\big|\rho_0^{\frac{1}{p+2}} u_0\big|_{p+2}^{p+2}\Big)
\le C\Big(1+|\rho_0|_1 |u_0|_{\infty}^{p+2}\Big)\le C\quad {\rm{for}}\ \ 0\leq t\leq T.
\end{split}
\end{equation}
Finally, the desired conclusion can be achieved  from \eqref{polo} and Lemma \ref{l4.1}. 

The proof of Lemma \ref{lma} is complete.
\end{proof}

The following lemma gives the estimate of $\int_0^t |\rho^{\iota}u(s,\cdot)|_\infty {\rm{d}}s$ $(\frac12\le \iota< 1)$, which plays a crucial role in obtaining the $L^\infty$ estimate of the so-called  effective velocity $v$ (see \eqref{ev}).

\begin{lem}\label{cru}
Assume 
\begin{equation}\label{zuizhong}
\iota=\frac12\quad \rm{for}\quad  0<\delta<1;\quad    \frac12<\iota<1\quad \rm{for}\quad  \delta=1.
\end{equation}
Then for any  $T>0$, it holds that 
\begin{equation*}
\int_0^t |\rho^{\iota}u(s,\cdot)|_{\infty} {\rm{d}}s\leq C \quad {\rm{for}}\ \ 0\leq t\leq T.
\end{equation*}
\end{lem}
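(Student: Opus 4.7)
\medskip

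The plan is to apply the 1-D Gagliardo--Nirenberg inequality $|f|_\infty^2\leq 2|f|_2|f_x|_2$ to $f=\rho^{\iota}u$ and then integrate in time via Hölder. First I would verify that $|\rho^{\iota}u|_2\leq C$ uniformly in $t$: writing $\rho^{\iota}u=\rho^{\iota-1/2}\cdot(\sqrt{\rho}\,u)$, the bound follows from $\iota\geq 1/2$, the uniform estimate $|\rho|_\infty\leq C_0$ (Lemma \ref{l4.3}), and the energy bound $|\sqrt{\rho}\,u|_2\leq C$ (Lemma \ref{l4.1}). Combined with GN and two applications of Hölder in time, this reduces the problem to showing
$$\int_0^T|(\rho^{\iota}u)_x|_2^2\,{\rm d}s\leq C.$$

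Expanding $(\rho^{\iota}u)_x=\iota\rho^{\iota-1}\rho_x u+\rho^{\iota}u_x$, the first (easy) piece $\rho^{\iota}u_x$ is controlled since $\iota\geq\delta/2$ under \eqref{zuizhong}, via
$$|\rho^{\iota}u_x|_2\leq|\rho|_\infty^{\iota-\delta/2}|\rho^{\delta/2}u_x|_2,$$
whose time integral of the square is finite by Lemma \ref{l4.1}. The delicate piece is the cross term $\rho^{\iota-1}\rho_x u$; here I plan to exploit the BD entropy integrability $\int_0^T\int\rho^{\gamma+\delta-3}\rho_x^2\,{\rm d}x\,{\rm d}s\leq C_0$ from Lemma \ref{l4.2} together with the factorization
$$\rho^{2(\iota-1)}\rho_x^2 u^2=\bigl(\rho^{(\gamma+\delta-3)/2}\rho_x\bigr)^2\cdot\rho^{2\iota-\gamma-\delta+1}u^2,$$
which gives
$$\int_0^T|\rho^{\iota-1}\rho_x u|_2^2\,{\rm d}s\leq C\sup_{[0,T]}\bigl|\rho^{(2\iota-\gamma-\delta+1)/2}u\bigr|_\infty^2.$$

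The main obstacle will be to bound the auxiliary weighted $L^\infty$ quantity $|\rho^{b}u|_\infty$ with $b=(2\iota-\gamma-\delta+1)/2$ uniformly in $t$. This is precisely where the precise choices in \eqref{zuizhong}---namely $\iota=1/2$ for $\delta\in(0,1)$ under $\gamma\geq\delta+1/2$, and $\iota\in(1/2,1)$ for $\delta=1$ under $\gamma>3/2$---are used: they arrange $b$ in a range where a second application of Gagliardo--Nirenberg, combined with the weighted $L^q$ bounds $|\rho^{1/q}u|_q\leq C$ of Lemma \ref{lma}, the uniform $|\rho|_\infty$, and the energy dissipation, closes the estimate without circularity. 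The strict inequality $\iota>1/2$ in the case $\delta=1$ is what provides the margin $\rho^{\iota-1/2}\leq|\rho|_\infty^{\iota-1/2}$ needed to absorb an otherwise borderline power of $\rho$ arising from $\rho_x/\sqrt{\rho}$ (the BD-controlled quantity when $\delta=1$). The heart of the argument is thus a delicate algebraic bookkeeping balancing the weights appearing in the energy estimate, the BD entropy, and the GN interpolation; once this is carried out, a final Cauchy--Schwarz in time converts the $L^2_t$ control of $|(\rho^{\iota}u)_x|_2$ into the desired $L^1_t$ bound on $|\rho^{\iota}u|_\infty$.
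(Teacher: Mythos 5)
Your overall skeleton (Gagliardo--Nirenberg on $\rho^{\iota}u$, product-rule splitting of $(\rho^{\iota}u)_x$, BD entropy for the cross term, energy dissipation for $\rho^{\iota}u_x$) matches the paper's, but the execution has a genuine gap at the cross term, and it is exactly the step you flag as "the main obstacle." Your factorization reduces $\int_0^T|\rho^{\iota-1}\rho_x u|_2^2\,{\rm d}s$ to $\sup_{[0,T]}|\rho^{b}u|_\infty^2$ with $b=(2\iota-\gamma-\delta+1)/2$. This quantity is not controllable at this stage: first, for admissible parameters $b$ is typically nonpositive (e.g.\ $\delta=\tfrac12$, $\gamma=2$, $\iota=\tfrac12$ gives $b=-\tfrac14$; $\delta=1$, $\gamma=2$, $\iota=0.6$ gives $b=-0.4$), and with far-field vacuum $\rho^{b}u$ with $b\le 0$ has no reason to be bounded. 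Second, and more fundamentally, a \emph{uniform-in-time} weighted $L^\infty$ bound on $u$ is strictly stronger than the conclusion of the lemma (which is only an $L^1$-in-time bound), and any attempt to close it by a second Gagliardo--Nirenberg runs into $\rho^{b}u_x\in L^\infty([0,T];L^2)$, i.e.\ pointwise-in-time control of $|\rho^{\delta/2}u_x|_2$ — but at this point only the time-integrated dissipation $\int_0^T|\rho^{\delta/2}u_x|_2^2\,{\rm d}s\le C$ is available (the $L^\infty_t$ bound on $|\rho^{(\delta-1)/2}u_x|_2$ comes only in Lemma \ref{l4.6}, which itself relies on Lemma \ref{cru} through the effective-velocity estimate). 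So the argument is circular.

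The paper avoids this precisely by \emph{not} insisting that $(\rho^{\iota}u)_x$ land in $L^2_x$. It uses the $L^\infty_t L^2_x$ part of the BD entropy, $|\rho^{\delta-\frac32}\rho_x|_2\le C$ (from $\sqrt\rho\,\varphi(\rho)_x\in L^\infty_tL^2$), and pairs it by H\"older with the genuinely time-uniform weighted $L^q$ bounds $|\rho^{1/q}u|_q\le C$ of Lemma \ref{lma} and the mass bound $|\rho^{(1-\delta)/2}|_{2/(1-\delta)}\le C$, so that the cross term sits in $L^\infty_tL^{p}_x$ and the dissipation term in $L^2_tL^p_x$ with $p=\tfrac{2}{2-\delta}$ (resp.\ $p=\tfrac1\iota$) in $(1,2)$. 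The one-dimensional Gagliardo--Nirenberg inequality $|f|_\infty\le C|f|_2^{1-\theta}|f_x|_p^{\theta}$ still applies for $p>1$, and Young's inequality in time then yields the $L^1_t L^\infty_x$ bound. If you replace your $L^2_x$ target for the derivative by this $L^p_x$ target with $p<2$, no pointwise-in-time $L^\infty$ bound on weighted $u$ is ever needed and the proof closes.
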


\begin{proof}
{\textbf{Case 1:}} $0<\delta<1$. First, it follows  direct calculations that 
\begin{equation*}\label{ne1}
\partial_x(\rho^{\frac12} u)=\frac12\rho^{\delta-\frac32}\rho_x\rho^{\frac{1-\delta}{2}}\rho^{\frac{1-\delta}{2}} u+\rho^{\frac{1-\delta}{2}}\rho^{\frac{\delta}{2}}u_x.
\end{equation*}

According to Lemmas \ref{l4.1}-\ref{ms} and \ref{lma}, one has 
\begin{equation*}\label{ne2}
|\rho^{\delta-\frac32}\rho_x|_2+|\rho^{\frac{1-\delta}{2}}u|_{\frac{2}{1-\delta}}+|\rho^{\frac{1-\delta}{2}}|_{\frac{2}{1-\delta}}+\int_0^t |\rho^{\frac{\delta}{2}}u_x|_2^2 {\rm{d}}s\leq C,
\end{equation*}
which, along with Lemma \ref{l4.3}, yields that 
\begin{equation}\label{sg}
\int_0^t |\partial_x(\rho^{\frac12}u)(s,\cdot)|_{\frac{2}{2-\delta}}^2\text{d}s\le C \quad {\rm{for}}\ \ 0\leq t\leq T.
\end{equation}

Second, notice that  $\frac{2}{2-\delta}\in (1,2)$, then by  the Sobolev embedding theorem (or see Lemma \ref{al1}), one has 
$$|\rho^{\frac12}u|_\infty\le C|\rho^{\frac12}u|_2^{1-\aleph}|\partial_x(\rho^{\frac12}u)|^\aleph_{\frac{2}{2-\delta}}\quad \text{with}\quad \aleph=\frac{1}{1+\delta}\in (0,1),$$
which, along with Lemma \ref{l4.1} and \eqref{sg}, yields that
$$\int_0^t |\rho^{\frac12}u(s,\cdot)|_{\infty} {\rm{d}}s\leq C \int_0^t\Big(1+|\partial_x(\rho^{\frac12}u)|^2_{\frac{2}{2-\delta}}\Big)(s,\cdot)\text{d}s\leq C \quad {\rm{for}}\ \ 0\leq t\leq T.$$

{\textbf{Case 2:}} $\delta=1$.  First, it follows  direct calculations that 
\begin{equation*}
\partial_x(\rho^\iota u)=\iota\rho^{-\frac12}\rho_x\rho^{\iota-\frac12} u+\rho^{\iota-\frac12}\rho^{\frac12} u_x.
\end{equation*}
According to Lemmas \ref{l4.1}-\ref{ms}, \ref{lma} and $\frac12<\iota<1$, one has 
\begin{equation*}
|\rho^{-\frac12}\rho_x|_2+|\rho^{\iota-\frac12}u|_{\frac{2}{2\iota-1}}+|\rho^{\iota-\frac12}|_{\frac{2}{2\iota-1}}+\int_0^t |\rho^{\frac{1}{2}}u_x|_2^2 {\rm{d}}s\leq C,
\end{equation*}
which yields that 
\begin{equation}\label{sgg}
\int_0^t |\partial_x(\rho^{\iota}u)(s,\cdot)|_{\frac{1}{\iota}}^2\text{d}s\le C \quad {\rm{for}}\ \ 0\leq t\leq T.
\end{equation}

Second, notice that $\frac{1}{\iota}\in(1,2)$, then by Sobolev embedding theorem (or see Lemma \ref{al1}), one has 
$$|\rho^{\iota}u|_\infty\le C|\rho^{\iota}u|_2^{1-\Xi}|\partial_x(\rho^{\iota}u)|^\Xi_{\frac{1}{\iota}}\quad \text{with}\quad \Xi=\frac{1}{3-2\iota}\in(0,1),$$
which, along with Lemmas \ref{l4.1}, \ref{l4.3} and \eqref{sgg}, yields that

$$\int_0^t |\rho^{\iota}u(s,\cdot)|_{\infty} {\rm{d}}s\leq C \int_0^t\Big(1+|\partial_x(\rho^{\iota}u)|^2_{\frac{1}{\iota}}\Big)(s,\cdot)\text{d}s\leq C \quad {\rm{for}}\ \ 0\leq t\leq T.$$

The proof of Lemma \ref{cru} is complete.
\end{proof}

Next we show the  $L^\infty$ estimate of the effective velocity:
\begin{equation}\label{ev}
 v=u+\varphi(\rho)_x=u+\alpha \rho^{\delta-2}\rho_x.
\end{equation}


\begin{lem}\label{l4.4}
Assume additionally 
\begin{equation}\label{zuizhong}
\gamma \geq \delta+ \frac{1}{2}\quad \rm{for}\quad  0<\delta<1;\quad   \gamma>\frac32\quad \rm{for}\quad  \delta=1.
\end{equation} Then for any  $T>0$, it holds that 
\begin{equation*}
|v(t,\cdot)|_{\infty}\leq C\quad {\rm{for}}\ \ 0\leq t\leq T.
\end{equation*}
\end{lem}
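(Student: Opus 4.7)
The strategy is to derive a transport-type evolution equation for the effective velocity $v = u + \varphi(\rho)_x$, recognize that the pressure-induced source can be re-expressed as a linear damping in $v$ itself, and then propagate the $L^\infty$ bound along particle paths.

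First I would combine the momentum equation, rewritten as $u_t + u u_x + P_x/\rho = (\mu(\rho) u_x)_x/\rho$, with the identity
\begin{equation*}
(\varphi(\rho)_x)_t = -u\,\varphi(\rho)_{xx} - \varphi(\rho)_x u_x - (\rho\varphi'(\rho) u_x)_x,
\end{equation*}
obtained by applying $\varphi'(\rho)$ to the continuity equation and differentiating once in $x$. Since $\rho\varphi'(\rho) = \alpha\rho^{\delta-1} = \mu(\rho)/\rho$, a direct computation yields the cancellation
\begin{equation*}
\frac{(\mu(\rho) u_x)_x}{\rho} - (\rho\varphi'(\rho) u_x)_x = \varphi'(\rho)\rho_x u_x = \varphi(\rho)_x u_x,
\end{equation*}
so that $v_t + u v_x = -P_x/\rho = -\frac{A\gamma}{\gamma-1}(\rho^{\gamma-1})_x$. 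Using $v - u = \varphi(\rho)_x = \alpha \rho^{\delta-2}\rho_x$ to write $\rho_x = \alpha^{-1}\rho^{2-\delta}(v-u)$, the source term becomes
\begin{equation*}
(\rho^{\gamma-1})_x = (\gamma-1)\rho^{\gamma-2}\rho_x = \frac{\gamma-1}{\alpha}\rho^{\gamma-\delta}(v-u),
\end{equation*}
turning the equation into the linear damped transport
\begin{equation*}
v_t + u v_x + \tfrac{A\gamma}{\alpha}\rho^{\gamma-\delta}\, v = \tfrac{A\gamma}{\alpha}\rho^{\gamma-\delta}\, u.
\end{equation*}

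Next, along the particle trajectories $\dot y(t;x) = u(t,y(t;x))$, $y(0;x)=x$ (well-defined since $u\in C([0,T];H^2)\hookrightarrow C([0,T];L^\infty)$), this is a linear ODE in $v(t,y(t;x))$ with nonnegative damping coefficient $\tfrac{A\gamma}{\alpha}\rho^{\gamma-\delta}\geq 0$. Applying the integrating factor (whose exponent is non-positive and thus bounded by $1$) gives
\begin{equation*}
|v(t,y(t;x))| \leq |v_0|_\infty + \tfrac{A\gamma}{\alpha}\int_0^t |\rho^{\gamma-\delta}\, u|_\infty\,{\rm d}s.
\end{equation*}
Since characteristics cover $\mathbb{R}$, this supplies $|v(t,\cdot)|_\infty$. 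The initial bound $|v_0|_\infty \leq C_0$ follows from $u_0\in H^2\hookrightarrow L^\infty$ together with $(\rho_0^{\delta-1})_x\in H^1\hookrightarrow L^\infty$ for $0<\delta<1$, and $(\ln\rho_0)_x\in H^1\hookrightarrow L^\infty$ for $\delta=1$.

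Finally, to close the estimate I split $\rho^{\gamma-\delta}|u| = \rho^{\gamma-\delta-\iota}\cdot \rho^{\iota}|u|$, choosing $\iota$ as in Lemma \ref{cru}: $\iota=\tfrac12$ for $0<\delta<1$, and some $\iota\in(\tfrac12,\min\{1,\gamma-1\})$ for $\delta=1$. The sharp assumption \eqref{zuizhong} is precisely what guarantees $\gamma-\delta-\iota\geq 0$; combined with the uniform upper bound $|\rho|_\infty\leq C_0$ from Lemma \ref{l4.3} and the integrability $\int_0^t|\rho^\iota u|_\infty\,{\rm d}s\leq C$ from Lemma \ref{cru}, one concludes $|v(t,\cdot)|_\infty\leq C$. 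The main obstacle, and also the central structural observation, is identifying that $(\rho^{\gamma-1})_x$ can be rewritten proportional to $\rho^{\gamma-\delta}(v-u)$: this simultaneously converts the transport equation into a dissipative linear ODE along characteristics and introduces the weight $\rho^{\gamma-\delta}$ whose interplay with Lemma \ref{cru} dictates the threshold $\gamma\geq \delta+\tfrac12$ (resp.\ $\gamma>\tfrac32$).
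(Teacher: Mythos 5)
Your proposal is correct and follows essentially the same route as the paper: derive the damped transport equation $v_t+uv_x+\frac{A\gamma}{\alpha}\rho^{\gamma-\delta}v=\frac{A\gamma}{\alpha}\rho^{\gamma-\delta}u$ (the paper states this as its equation for $v$ and proves it in Appendix~B by the same cancellation you exhibit), solve along characteristics with the nonnegative damping absorbed into an integrating factor bounded by one, and control $\int_0^t|\rho^{\gamma-\delta}u|_\infty\,{\rm d}s$ by writing $\rho^{\gamma-\delta}|u|=\rho^{\gamma-\delta-\iota}\cdot\rho^{\iota}|u|$ and invoking Lemmas \ref{l4.3} and \ref{cru} under the threshold \eqref{zuizhong}. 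The only cosmetic differences are that you justify $|v_0|_\infty\leq C_0$ explicitly via Sobolev embedding and inline the derivation of the $v$-equation, both of which the paper handles implicitly or in the appendix.
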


\begin{proof}
First, it follows from  the system $\eqref{e1.1}$ and the definition of $v$ that 
\begin{equation}\label{e-1.16}
v_t+uv_x+\frac{A\gamma}{\alpha}\rho^{\gamma-\delta}v-\frac{A\gamma}{\alpha}\rho^{\gamma-\delta}u=0.
\end{equation}
Here, the detail derivation of the above equation for $v$ can be found in Appendix B.

Via the  standard characteristic method, one can obtain 
\begin{equation*}
v=\left(v_0+\int_0^t \frac{A\gamma}{\alpha} \rho^{\gamma-\delta}u\  \exp \Big(\int_0^s \frac{A\gamma}{\alpha} \rho^{\gamma-\delta}{\rm{d}}\tau\Big){\rm{d}}s\right)\exp\left(-\int_0^t \frac{A\gamma}{\alpha} \rho^{\gamma-\delta} {\rm{d}}s\right).
\end{equation*}
Then according to Lemmas \ref{l4.3} and  \ref{cru}, one has 
\begin{equation}\label{e-1.17}
\begin{split}
|v|_\infty\le & C\Big(|v_0|_\infty+\int_0^t|\rho^{\gamma-\delta}u|_\infty{\rm{d}}s\Big)\\
\le &C\Big(1+\int_0^t |\rho|^{\gamma-\delta-\frac{1}{2}}_{\infty} |\rho^{\frac{1}{2}}u|_\infty{\rm{d}}s\Big)\\
\le &C \quad \text{for} \quad 0<\delta<1, \quad  \gamma\ge \delta+\frac12,\\
|v|_\infty\le & C\Big(|v_0|_\infty+\int_0^t|\rho^{\gamma-1}u|_\infty{\rm{d}}s\Big)\\
\le & C\Big(1+\int_0^t |\rho|^{\gamma-1-\iota}_{\infty} |\rho^{\iota}u|_\infty{\rm{d}}s\Big)\\
\le &C \quad \text{for}\quad \delta=1,\quad  \gamma\ge 1+\iota>\frac32.
\end{split}
\end{equation}

 The  proof of Lemma \ref{l4.4} is complete.
\end{proof}

 Now we show the  $L^2$ estimate of velocity $u$.

\begin{lem}\label{l4.5}
Assume \eqref{zuizhong} additionally. Then for any $T>0$,  it holds that 
\begin{equation*}
|u(t,\cdot)|_2^2+\int_0^t| \rho^{\frac{\delta-1}{2}} u_x(s,\cdot)|_2^2{\rm{d}}s\leq C\quad {\rm{for}}\ \  0\le t\le T.
\end{equation*}
\end{lem}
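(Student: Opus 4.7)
The plan is to derive an ODE inequality for $|u(t,\cdot)|_2^2$ by multiplying the momentum equation (divided through by $\rho$) by $u$ and then using the effective velocity decomposition $\alpha \rho^{\delta-2}\rho_x = v - u$ from Lemma \ref{l4.4} to turn all the apparently singular boundary-like terms into controllable products involving the bounded quantity $v$. First, since $\rho>0$ pointwise for a regular solution, I would rewrite $\eqref{e1.1}_2$ as
\begin{equation*}
u_t + u u_x + A\gamma\, \rho^{\gamma-2}\rho_x = \alpha \rho^{\delta-1} u_{xx} + \alpha\delta \rho^{\delta-2}\rho_x u_x,
\end{equation*}
multiply by $u$, integrate over $\mathbb{R}$, use $\int u^2 u_x\,dx = 0$ (valid since $u \in H^2$), and integrate the viscous term by parts to obtain
\begin{equation*}
\tfrac{1}{2}\tfrac{d}{dt}|u|_2^2 + \alpha \int \rho^{\delta-1} u_x^2 \, dx = -A\gamma\int u \rho^{\gamma-2}\rho_x\,dx + \alpha\int u \rho^{\delta-2}\rho_x u_x \, dx.
\end{equation*}

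Next, I would substitute $\alpha \rho^{\delta-2}\rho_x = v-u$ into the right-hand side, again using $\int u^2 u_x\,dx = 0$, to rewrite it as
\begin{equation*}
I_1 + I_2 + I_3 := \int uv\, u_x\,dx - \frac{A\gamma}{\alpha}\int uv\, \rho^{\gamma-\delta}\,dx + \frac{A\gamma}{\alpha}\int u^2 \rho^{\gamma-\delta}\,dx.
\end{equation*}
For $I_1$, the splitting $u u_x = (\rho^{(1-\delta)/2}u)\cdot (\rho^{(\delta-1)/2} u_x)$ together with $|v|_\infty\le C$ (Lemma \ref{l4.4}) and $|\rho|_\infty\le C$ (Lemma \ref{l4.3}) yields, via Cauchy--Schwarz and Young, a bound of the form $\tfrac{\alpha}{2}|\rho^{(\delta-1)/2} u_x|_2^2 + C|u|_2^2$. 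For $I_3$, $|\rho|_\infty^{\gamma-\delta}|u|_2^2 \le C|u|_2^2$.

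The key term is $I_2$. Here I would again use $|v|_\infty\le C$ and Cauchy--Schwarz to get $|I_2|\le C|u|_2 \cdot |\rho^{\gamma-\delta}|_2$; the hypothesis $\gamma\ge \delta+\tfrac{1}{2}$ (resp.\ $\gamma>\tfrac{3}{2}$ in the case $\delta=1$) then gives $2(\gamma-\delta)\ge 1$, so that
\begin{equation*}
|\rho^{\gamma-\delta}|_2^2 = \int \rho^{2(\gamma-\delta)}\,dx \le |\rho|_\infty^{2(\gamma-\delta)-1} |\rho|_1 \le C,
\end{equation*}
using mass conservation (Lemma \ref{ms}) and $|\rho|_\infty\le C$. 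Combining all three estimates produces
\begin{equation*}
\tfrac{d}{dt}|u|_2^2 + \alpha |\rho^{(\delta-1)/2} u_x|_2^2 \le C\bigl(1 + |u|_2^2\bigr),
\end{equation*}
after which Gronwall's inequality together with time integration yields the desired bound.

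The main obstacle is precisely the treatment of $I_2$: because the effective velocity $v$ is controlled only in $L^\infty$ and not in $L^2$ (the B--D estimate only furnishes $\sqrt{\rho}\,v\in L^\infty_t L^2$), the weight $\rho^{\gamma-\delta}$ must be integrable in a very specific way, and this is exactly what forces the threshold condition on $\gamma$ stated in \eqref{zuizhong}. Every other estimate in the computation is elementary once the effective velocity substitution has been performed.
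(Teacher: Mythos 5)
Your proof is correct and follows essentially the same route as the paper: the same $L^2$ energy identity obtained by testing the (density-divided) momentum equation with $u$, the same substitution $\alpha\rho^{\delta-2}\rho_x=v-u$ to eliminate the singular density gradient, and the same three resulting terms estimated via $|v|_\infty\le C$, $|\rho|_\infty\le C$, and $|\rho^{\gamma-\delta}|_2\le |\rho|_\infty^{\gamma-\delta-\frac12}|\rho|_1^{\frac12}$ (which is exactly where the paper also uses $\gamma\ge\delta+\frac12$, resp. $\gamma>\frac32$), followed by Gronwall. Your closing remark correctly identifies that at this stage $v$ is available only in $L^\infty$, which is precisely why the mass-conservation splitting of $\rho^{\gamma-\delta}$ is needed.
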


\begin{proof}
{\textbf{Case 1:}} $0<\delta<1$.
 From $\eqref{e2.2}_2$, one has 
\begin{equation}\label{e4.14}
u_t+u u_x+A\gamma\rho^{\gamma-2}\rho_x=\alpha(\rho^{\delta-1}u_{x})_x+\alpha\rho^{\delta-2}\rho_x u_x.
\end{equation}
Multiplying \eqref{e4.14} by $u$ and integrating over $\mathbb R$, one has 
\begin{equation}\label{e4.15}
\begin{split}
&\frac12\frac{\text{d}}{\text{d}t} |u|_2^2 +\alpha|\rho^{\frac{\delta-1}{2}}u_x|_2^2\\
=& \int \big(-u u_{x}-A\gamma \rho^{\gamma-2}\rho_x +\alpha\rho^{\delta-2}\rho_x u_x\big) u{\rm{d}}x\triangleq  \sum_{i=1}^3 \text{J}_i.
\end{split}
\end{equation}

According to   H\"older's  inequality, Young's inequality and Lemmas \ref{l4.1}-\ref{l4.4}, one can obtain 
\begin{equation}\label{j13}
\begin{aligned}
\text{J}_1=&-\int u^2 u_{x}{\rm{d}}x=0,\\
\text{J}_2=&-A\gamma\int \rho^{\gamma-2}\rho_x u{\rm{d}}x=-A\gamma\int \rho^{\gamma-\delta}\rho^{\delta-2}\rho_x u{\rm{d}}x\\
=&-\frac{A\gamma}{\alpha}\int \rho^{\gamma-\delta}(v-u)u{\rm{d}}x\\
\leq& C\Big(|\sqrt\rho |_{2}|u|_{2}|v|_\infty|\rho|_{\infty}^{\gamma-\delta-\frac12}+|\rho|_{\infty}^{\gamma-\delta}|u|_{2}^2\Big)
\leq C|u|_{2}^2+C,\\
\text{J}_3=&\alpha\int \rho^{\delta-2}\rho_x u_x u{\rm{d}}x=\int \left(v-u\right)u_x u{\rm{d}}x\\
\leq &C|v|_{\infty}|\rho|_{\infty}^{\frac{1-\delta}{2}}|\rho^{\frac{\delta-1}{2}}u_x|_{2}|u|_{2}
\leq C|u|_{2}^2+\frac{\alpha}{2} |\rho^{\frac{\delta-1}{2}}u_x|_2^2.
\end{aligned}
\end{equation}
It follows from  \eqref{e4.15}-\eqref{j13} and  the Gronwall  inequality that 
\begin{equation*}\label{e4.16}
|u(t,\cdot)|_2^2+\int_0^t| \rho^{\frac{\delta-1}{2}} u_x(s,\cdot)|_2^2{\rm{d}}s\leq C\quad {\rm{for}}\ \ 0\leq t\leq T.
\end{equation*}

{\textbf{Case 2:}}  $\delta=1$.
Multiplying $\eqref{e2.1}_2$ by $u$ and integrating over $\mathbb R$, one has 
\begin{equation*}\label{e4.170}
\frac12\frac{\text{d}}{\text{d}t}|u|_2^2+\alpha | u_x|_2^2=\int \big(-u u_{x}-A\gamma\rho^{\gamma-2} \rho_x +\alpha\psi u_x \big)u{\rm{d}}x.
\end{equation*}
Via the similar argument for estimating $\text{J}_1$-$\text{J}_3$, one can obtain 
\begin{equation*}\label{e4.171}
|u(t,\cdot)|_2^2+\int_0^t| u_x(s,\cdot)|_2^2{\rm{d}}s\leq C\quad {\rm{for}}\ \ 0\leq t\leq T.
\end{equation*}

The proof of Lemma \ref{l4.5} is complete.
\end{proof}

\subsection{The  first order   estimates of $u$}
Now we are ready to consider the first order estimates of $u$.
\begin{lem}\label{l4.6}
Assume \eqref{zuizhong} additionally. Then for any $T>0$,  it holds that 
\begin{equation*}\label{e-1.21}
| \rho^{\frac{\delta-1}{2}}u_x(t,\cdot)|_2^2+\int_0^t\big( |u_t|_2^2+|\rho^{\delta-1}u_{xx}|_2^2+|u_x|_{\infty}\big)(s,\cdot){\rm{d}}s\leq C \quad {\rm{for}}\ \ 0\leq t\leq T.
 \end{equation*}
\end{lem}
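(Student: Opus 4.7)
My plan is to test the non-conservative momentum equation \eqref{e4.14} (and its analogue from $\eqref{e2.1}$ for $\delta=1$) against $u_t$ and integrate. Integration by parts on the viscous term, combined with the mass equation to rewrite $(\rho^{\delta-1})_t$, yields
\[
|u_t|_2^2 + \frac{\alpha}{2}\frac{d}{dt}E(t) = I_1+I_2+I_3+I_4,
\]
where $E(t):=|\rho^{(\delta-1)/2}u_x(t)|_2^2$, $I_1=-\!\int u u_x u_t\,dx$, $I_2=-A\gamma\!\int\rho^{\gamma-2}\rho_x u_t\,dx$, $I_3=\alpha\!\int\rho^{\delta-2}\rho_x u_x u_t\,dx$, and $I_4=\frac{\alpha(1-\delta)}{2}\!\int\rho^{\delta-1}u_x^3\,dx+\frac{1-\delta}{2}\!\int u(v-u)u_x^2\,dx$, which vanishes identically when $\delta=1$. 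The target is a Gronwall-type inequality for $E$ with $|u_t|_2^2$ serving as dissipation.

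First I would handle the benign terms via the effective-velocity identity $\alpha\rho^{\delta-2}\rho_x = v-u$: this consolidates $I_1+I_3$ into $-\!\int u_x u_t(2u-v)\,dx$ and recasts $I_2$ as $-\frac{A\gamma}{\alpha}\!\int\rho^{\gamma-\delta}(v-u)u_t\,dx$. The hypothesis $\gamma\ge\delta+\tfrac12$ (respectively $\gamma>\tfrac32$ for $\delta=1$) combined with Lemmas \ref{l4.1} and \ref{l4.3} yields $|\rho^{\gamma-\delta}|_2^2\le|\rho|_\infty^{2(\gamma-\delta)-1}|\rho|_1\le C$, while Lemma \ref{l4.4} bounds $|v|_\infty$. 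Paired with the one-dimensional interpolation $|u|_\infty\le C|u|_2^{1/2}|u_x|_2^{1/2}$ and $|u_x|_2\le|\rho|_\infty^{(1-\delta)/2}E^{1/2}$ (using Lemma \ref{l4.5}), these absorb $I_1,I_2,I_3$ and the second half of $I_4$ into $\varepsilon|u_t|_2^2 + C(1+E)$ via Young.

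The central obstacle is the cubic term $\int\rho^{\delta-1}u_x^3\,dx\le|u_x|_\infty E(t)$, since $|u_x|_\infty$ couples back to the very quantity being estimated. I would close the loop by a two-step bootstrap. First, reading off $\alpha\rho^{\delta-1}u_{xx}=u_t+uu_x+A\gamma\rho^{\gamma-2}\rho_x-\alpha\delta\rho^{\delta-2}\rho_xu_x$ algebraically from the equation and once more invoking the effective-velocity identity gives $|\rho^{\delta-1}u_{xx}|_2\le C(|u_t|_2+E^{1/2}+1)$; since $|\rho|_\infty\le C$ and $\delta\le 1$, it follows that $|u_{xx}|_2\le|\rho|_\infty^{1-\delta}|\rho^{\delta-1}u_{xx}|_2$. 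Second, the one-dimensional Gagliardo--Nirenberg estimate $|u_x|_\infty\le C|u_x|_2^{1/2}|u_{xx}|_2^{1/2}$ feeds back into the cubic integral; a careful application of Young's inequality, possibly preceded by an integration by parts on $\int\rho^{\delta-1}u_x^3\,dx$ to extract an extra factor of $|u|_\infty\le CE^{1/4}$, is expected to bring this contribution into the form $\varepsilon|u_t|_2^2+C(1+E)$. Balancing the exponents so as not to produce a super-linear Gronwall inequality is the most delicate step of the entire argument.

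Once the differential inequality $\frac{d}{dt}E+\tfrac12|u_t|_2^2\le C(1+E)$ is established, Gronwall together with the finiteness of $E(0)=|g_1|_2^2$ and $|u_t(0)|_2<\infty$ (both delivered by the compatibility conditions \eqref{e1.8}, where the second one is used precisely to make $u_t(0)\in L^2$ out of the otherwise-singular term $\alpha\rho_0^{\delta-1}\partial_x^2 u_0$) produces the $L^\infty_t$ bound on $E$ and the $L^2_t$ bound on $u_t$. The remaining two estimates follow directly: the $L^2_t$ bound on $|\rho^{\delta-1}u_{xx}|_2$ is immediate from the algebraic expression above, and the $L^1_t$ bound on $|u_x|_\infty$ follows by Cauchy--Schwarz applied to Gagliardo--Nirenberg using the already-controlled $u_x\in L^\infty_t L^2$ and $u_{xx}\in L^2_t L^2$. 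The case $\delta=1$ reduces to the same scheme with $I_4\equiv 0$, the viscous operator already $-\alpha u_{xx}$, and no $\rho^{\delta-1}$ weight to manage, so the interpolation step for $|u_{xx}|_2$ can be skipped.
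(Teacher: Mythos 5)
Your skeleton coincides with the paper's: test the reduced momentum equation against $u_t$, rewrite $(\rho^{\delta-1})_t$ via the mass equation, convert every $\rho^{\delta-2}\rho_x$ into $\tfrac{1}{\alpha}(v-u)$ using the effective velocity, bound $|\rho^{\delta-1}u_{xx}|_2$ algebraically from the equation, and feed $|u_x|_\infty\le C|u_x|_2^{1/2}|u_{xx}|_2^{1/2}$ back into the cubic term. Up to that point the proposal is sound.

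The gap is exactly at the step you flag as ``the most delicate'': the differential inequality you are aiming for, $\frac{\rm d}{{\rm d}t}E+\tfrac12|u_t|_2^2\le C(1+E)$, is not attainable, and no amount of exponent balancing or integration by parts will make it so. Track the powers: the cubic term gives $|u_x|_\infty E\le C|u_x|_2^{1/2}|u_{xx}|_2^{1/2}E$ with $|u_x|_2\le CE^{1/2}$ and $|u_{xx}|_2\le C(1+|u_t|_2+E^{3/4})$, so after Young you are left with $\varepsilon|u_t|_2^2+C\bigl(1+E^{5/3}\bigr)$ (the term $E^{5/4}|u_t|_2^{1/2}$ alone forces the exponent $5/3$); your alternative of integrating $\int\rho^{\delta-1}u_x^3\,{\rm d}x$ by parts to harvest $|u|_\infty\le CE^{1/4}$ still leaves $E^{3/2}$. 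The paper accepts the super-linear bound $\frac{\rm d}{{\rm d}t}E+|u_t|_2^2\le C\bigl(1+E^{5/3}\bigr)$ and closes it by a Gronwall argument with an $L^1$-in-time coefficient: writing $1+E^{5/3}\le C(1+E)^2$ and noting that $\int_0^T(1+E)\,{\rm d}s\le C$ is already known from Lemma \ref{l4.5} (the dissipation $\int_0^T|\rho^{\frac{\delta-1}{2}}u_x|_2^2\,{\rm d}s\le C$), one gets $1+E(t)\le(1+E(0))\exp\bigl(C\int_0^t(1+E)\,{\rm d}s\bigr)\le C$. Without invoking this time-integrability of $E$ the argument only closes locally in time, so your proof as written does not reach the stated global-in-$[0,T]$ bound. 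Once this closing device is inserted, the remaining assertions ($L^2_t$ bounds on $u_t$ and $\rho^{\delta-1}u_{xx}$, and the $L^1_t$ bound on $|u_x|_\infty$ by Cauchy--Schwarz) follow as you describe, and the case $\delta=1$ simplifies as you indicate.
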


\begin{proof}
\textbf{Case 1:} $0<\delta<1$.
Multiplying \eqref{e4.14} by $u_t$ and integrating over $\mathbb R$, one has 
\begin{equation}\label{e-1.20}
\begin{split}
&\frac{\alpha}{2}\frac{\text{d}}{\text{d}t}| \rho^{\frac{\delta-1}{2}}u_x|_2^2+| u_t|_2^2\\
=&\int\Big(\frac{\alpha}{2}(\rho^{\delta-1})_t u_x^2- \big(u u_x +A\gamma \rho^{\gamma-2}\rho_x -\alpha \rho^{\delta-2}\rho_x u_x\big) u_t\Big){\rm{d}}x\triangleq \sum_{i=4}^7 \text{J}_i.
\end{split}
\end{equation}

According to the equation  $\eqref{e1.1}_1$, H\"older's inequality, Lemma \ref{al1} and Lemmas \ref{l4.1}-\ref{l4.5}, one can obtain that 
\begin{equation}\label{j4}
\begin{split}
\text{J}_4=&\frac{\alpha}{2}\int (\rho^{\delta-1})_t u_x^2{\rm{d}}x=-\frac{\alpha}{2}\int \big((\rho^{\delta-1})_x u+(\delta-1)\rho^{\delta-1}u_x\big)u_x^2{\rm{d}}x\\
=&-\frac{\alpha(\delta-1)}{2}\int \rho^{\delta-2}\rho_x u u_x^2{\rm{d}}x-\frac{\alpha(\delta-1)}{2}\int \rho^{\delta-1}u_x^3{\rm{d}}x\\
=&-\frac{\delta-1}{2}\int (v-u)u u_x^2{\rm{d}}x-\frac{\alpha(\delta-1)}{2}\int \rho^{\delta-1}u_x^3{\rm{d}}x\\
=&-\frac{\delta-1}{2}\int v u u_x^2{\rm{d}}x+\frac{\delta-1}{2}\int u^2 u_x^2{\rm{d}}x-\frac{\alpha(\delta-1)}{2}\int \rho^{\delta-1}u_x^3{\rm{d}}x\\
\le &-\frac{\delta-1}{2}\int v u u_x^2{\rm{d}}x-\frac{\alpha(\delta-1)}{2}\int \rho^{\delta-1}u_x^3{\rm{d}}x\\
\le& C\big(|v|_{\infty}|u|_{\infty}|\rho|_{\infty}^{1-\delta}+|u_x|_{\infty}\big)| \rho^{\frac{\delta-1}{2}}u_x|_2^2\\
\le& C\big(1+|u_{xx}|_2^{\frac12}\big)|u_x|_2^{\frac12} | \rho^{\frac{\delta-1}{2}}u_x|_2^2.
\end{split}
\end{equation}

Then we  need to consider $|u_{xx}|_2$. According to \eqref{e4.14}, one has 
\begin{align*}
|u_{xx}|_{2}\leq& C|\rho|_\infty^{1-\delta}|\rho^{\delta-1}u_{xx}|_{2}\\
\leq &C\big(|u_t|_{2}+|u u_x|_{2}+|\rho^{\delta-2}\rho_x u_x|_{2}+|\rho^{\gamma-2}\rho_x|_{2}\big)\\
\leq &C\big(|u_t|_{2}+|u |_{2}|u_x|_{\infty}+|(v-u) u_x|_{2}+|\rho^{\gamma-\delta}(v-u)|_{2}\big)\\
\leq &C\big(|u_t|_{2}+|u|_{2} |u_x|_{2}^{\frac12}|u_{xx}|_{2}^{\frac12}+(|v|_{\infty}+|u|_{\infty})|u_x|_{2}\\
&+|\rho^{\gamma-\delta}|_{2}|v|_{\infty}+|\rho|_{\infty}^{\gamma-\delta}|u|_{2}\big) \\
\le & C\big(1+|u_t|_{2}+|u_x|_{2}^{\frac32}\big)+\frac12|u_{xx}|_{2}\label{e-1.25},
\end{align*}
which implies that  
\begin{equation}\label{e-1.290}
|u_{xx}|_{2}\leq C\Big(1+|u_t|_{2}+|u_x|_{2}^{\frac32}\Big).
\end{equation}

Consequently, according to \eqref{j4}-\eqref{e-1.290}, Lemma \ref{l4.3} and Young's inequality, one can obtain 
\begin{equation}\label{j14}
\text{J}_4\leq C\Big(1+|\rho^{\frac{\delta-1}{2}}u_x|_{2}^{\frac{10}{3}}\Big)+\frac18|u_t|_{2}^2.
\end{equation}

For the terms $\text{J}_i \ (i=5,6,7)$, one can similarly obtain 
\begin{equation}\label{strange1}
    \begin{split}
       \text{J}_5=&-\int u u_x u_t{\rm{d}}x\leq C|u|_{\infty}|u_x|_{2}|u_t|_{2}\\
\leq& C|u|_{2}^{\frac12}|u_x|_{2}^{\frac12}|\rho|_{\infty}^{\frac{1-\delta}{2}}|\rho^{\frac{\delta-1}{2}}u_x|_{2}|u_t|_{2}\\
\leq& C|\rho^{\frac{\delta-1}{2}}u_x|_{2}^3+\frac18 |u_t|_{2}^2,\\
\text{J}_6=&-A\gamma\int \rho^{\gamma-2}\rho_x u_t{\rm{d}}x
=  -\frac{A\gamma}{\alpha}\int \rho^{\gamma-\delta} (v-u) u_t{\rm{d}}x\\
\le & C \Big(|\rho^{\gamma-\delta}|_{2} |v|_\infty + |\rho|_{\infty}^{\gamma-\delta} |u|_2\Big) |u_t|_2
\le C+\frac18 |u_t|^2_2,\\
\text{J}_7=&\alpha \int \rho^{\delta-2}\rho_x u_x u_t{\rm{d}}x=\int (v-u)u_x u_t{\rm{d}}x\\
\leq &C\left(|v|_{\infty}+|u|_{\infty}\right)|u_x|_{2}|u_t|_{2}\\
\leq &C(1+|u_x|_{2}) |\rho^{\frac{\delta-1}{2}}u_x|_2^2+\frac18|u_t|_{2}^2.
    \end{split}
\end{equation}

Substituting \eqref{j4}-\eqref{strange1} into \eqref{e-1.20}, one gets
\begin{equation}\label{e-1.23}
\frac{\text{d}}{\text{d}t}|\rho^{\frac{\delta-1}{2}}u_x|_2^2+|u_t|_2^2\leq C\Big(1+|\rho^{\frac{\delta-1}{2}}u_x|_{2}^{\frac{10}{3}}\Big),
\end{equation}
which, along with  the Gronwall inequality and \eqref{e-1.290}, yields that 
 \begin{equation}\label{eg}
 |\rho^{\frac{\delta-1}{2}}u_x(t,\cdot)|_2^2+\int_0^t\big( |u_t|_2^2+|\rho^{\delta-1}u_{xx}|_2^2\big)(s,\cdot){\rm{d}}s\leq C \quad {\rm{for}}\ \ 0\le t\le T.
\end{equation}
\par\medskip

\textbf{Case 2:} $\delta=1$. First, similarly to the derivation of \eqref{e-1.290}, one still has 
\begin{equation}\label{e-1.290dengyuyi}
|u_{xx}|_{2}\leq C\Big(1+|u_t|_{2}+|u_x|_{2}^{\frac32}\Big).
\end{equation}

Second, multiplying $\eqref{e2.1}_2$ by $u_t$ and integrating  over $\mathbb R$, one has
\begin{equation*}
\begin{aligned}
&\frac{\alpha}{2}\frac{\text{d}}{\text{d}t}| u_x|_2^2+| u_t|_2^2\\
=& -\int u u_x u_t{\rm{d}}x-A\gamma\int \rho^{\gamma-2}\rho_x u_t{\rm{d}}x+\alpha \int \rho^{-1}\rho_x u_x u_t{\rm{d}}x.
\end{aligned}
\end{equation*}
Via the similar arguments for estimating $\text{J}_5$-$\text{J}_7$, one can get
 \begin{equation}\label{e4.29}
| u_x(t,\cdot)|_2^2 +\int_0^t( |u_t(s,\cdot)|_2^2+|u_{xx}(s,\cdot)|_2^2){\rm{d}}s\leq C \quad {\rm{for}}\ \ 0\leq t\leq T.
\end{equation}

Finally, it follows from \eqref{eg}, \eqref{e4.29} and    Sobolev embedding theorem that
\begin{equation*}\label{e4.31}
\begin{split}
\int_0^t |u_{x}(s,\cdot)|_{\infty}{\rm{d}}s\leq C\int_0^t |u_x(s,\cdot)|_{2}^{\frac12}|u_{xx}(s,\cdot)|_{2}^{\frac12}{\rm{d}}s
\leq C \quad {\rm{for}}&\ \ 0\leq t\leq T.
\end{split}
\end{equation*}

The proof of Lemma \ref{l4.6} is complete. 
\end{proof}

\subsection{The  second  order  estimates of $u$}
 
We consider the second order estimates of $u$. For this purpose,  we first  give the $L^2$ and $L^4$ estimates for $\psi$.
 
 \begin{lem}\label{l4.7}
Assume \eqref{zuizhong} additionally. Then for any $T>0$, it holds that 
 \begin{equation*}\label{e4.34}
|\psi(t,\cdot)|_{2}+|\psi(t,\cdot)|_{4}\leq C \quad {\rm{for}}\ \  0\leq t\leq T.
\end{equation*}
 \end{lem}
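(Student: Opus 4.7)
The plan is to test the transport-type equation for $\psi$ against $\psi$ and $\psi^3$ and then exploit the bounds already established. Recalling that $a\phi^{2e}=\rho^{\delta-1}$, both the $\delta<1$ equation in \eqref{e2.2} and the $\delta=1$ equation in \eqref{e2.1} can be written in the unified form
$$\psi_t+(u\psi)_x+(\delta-1)\psi u_x+\delta\rho^{\delta-1}u_{xx}=0,$$
the source $\delta\rho^{\delta-1}u_{xx}$ being exactly what the a priori estimates of Lemma \ref{l4.6} were designed to control.

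For $|\psi|_2$, multiplying by $\psi$, integrating over $\mathbb{R}$, and using $\int(u\psi)_x\psi\,dx=\tfrac12\int u_x\psi^2\,dx$ yields
$$\tfrac12\tfrac{d}{dt}|\psi|_2^2+\Big(\delta-\tfrac12\Big)\int u_x\psi^2\,dx+\delta\int\rho^{\delta-1}u_{xx}\,\psi\,dx=0.$$
The first integral is bounded by $|u_x|_\infty|\psi|_2^2$, and the second by Cauchy--Schwarz and Young by $C(|\rho^{\delta-1}u_{xx}|_2^2+|\psi|_2^2)$. Since Lemma \ref{l4.6} provides $\int_0^t|u_x|_\infty\,ds\le C$ and $\int_0^t|\rho^{\delta-1}u_{xx}|_2^2\,ds\le C$ (the latter with $\delta=1$ reducing to $|u_{xx}|_2^2$), Gronwall's inequality closes the $L^2$ bound.

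For $|\psi|_4$, the cleanest route is to first observe that the effective-velocity framework of Lemma \ref{l4.4} already forces $|\psi|_\infty\le C$: the identity $v=u+\varphi(\rho)_x$ with $\varphi'(\rho)=\alpha\rho^{\delta-2}$ gives $\psi=\frac{\delta}{\alpha}(v-u)$ when $0<\delta<1$ and $\psi=\alpha^{-1}(v-u)$ when $\delta=1$. Combining $|v|_\infty\le C$ (Lemma \ref{l4.4}) with the one-dimensional interpolation $|u|_\infty^2\le 2|u|_2|u_x|_2$, and controlling $|u_x|_2\le|\rho|_\infty^{(1-\delta)/2}|\rho^{(\delta-1)/2}u_x|_2\le C$ via Lemmas \ref{l4.3} and \ref{l4.5}, produces $|\psi|_\infty\le C$. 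The $L^4$ bound is then immediate from $|\psi|_4^4\le|\psi|_\infty^2|\psi|_2^2\le C$. Equivalently, one can obtain the same bound by an energy identity: multiplying the $\psi$-equation by $\psi^3$, integrating, and using $\int(u\psi)_x\psi^3\,dx=\tfrac34\int u_x\psi^4\,dx$, so that
$$\tfrac14\tfrac{d}{dt}|\psi|_4^4+\Big(\delta-\tfrac14\Big)\int u_x\psi^4\,dx+\delta\int\rho^{\delta-1}u_{xx}\psi^3\,dx=0,$$
and estimating the source by $|\rho^{\delta-1}u_{xx}|_2\,|\psi|_\infty|\psi|_4^2\le C|\rho^{\delta-1}u_{xx}|_2^2+C|\psi|_4^4$ before applying Gronwall.

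The main (minor) obstacle is making the $L^\infty$ control of $\psi$ precise; this is the step where the structural input of the BD entropy and effective velocity beyond the bare Lemma \ref{l4.6} is really needed. Once this $L^\infty$ bound is in place, the integrations by parts are justified by the far-field decay built into Definitions \ref{def2}--\ref{def1}, and the remaining estimates are routine.
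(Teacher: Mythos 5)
Your proof is correct, but the $L^2$ step follows a genuinely different route from the paper's. The paper never tests the $\psi$-equation at all: it multiplies the effective-velocity equation \eqref{e-1.16} by $v$ and runs Gronwall on $|v|_2^2$, which is advantageous because \eqref{e-1.16} is a damped transport equation with \emph{no} second-derivative source, so the only input needed is $\int_0^t|u_x|_\infty\,{\rm d}s\le C$ from Lemma \ref{l4.6}; the bounds $|\psi|_2\le C(|v|_2+|u|_2)$ and $|\psi|_4\le C(|v|_4+|u|_4)$ then follow from $\psi=\frac{\delta}{\alpha}(v-u)$ (resp. $\frac{1}{\alpha}(v-u)$), Lemma \ref{l4.4} and the $L^2$, $L^\infty$ control of $u$. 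You instead run the energy estimate directly on the $\psi$-transport equation, which is also valid but pays for the source $\delta\rho^{\delta-1}u_{xx}$; the price is covered precisely by $\int_0^t|\rho^{\delta-1}u_{xx}|_2^2\,{\rm d}s\le C$ from Lemma \ref{l4.6}, so nothing is lost, and your coefficient bookkeeping ($\delta-\tfrac12$ and $\delta-\tfrac14$ after the integrations by parts) is right. Your $L^4$ step is essentially the paper's argument in disguise: the bound $|\psi|_\infty\le C$ via $\psi=c(v-u)$, $|v|_\infty\le C$ and $|u|_\infty\le C|u|_2^{1/2}|u_x|_2^{1/2}$ (with $|u_x|_2\le|\rho|_\infty^{(1-\delta)/2}|\rho^{(\delta-1)/2}u_x|_2$) is exactly the structural input the paper uses to pass from $L^2$ to $L^4$, and it is \emph{not} an obstacle as you suggest at the end --- it is immediate from Lemmas \ref{l4.3}--\ref{l4.6}. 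In short, your argument trades the paper's clean $v$-energy estimate for a $\psi$-energy estimate that leans slightly harder on Lemma \ref{l4.6}; both close, and the paper's version is marginally more economical.
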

 
 \begin{proof}
  Multiplying \eqref{e-1.16} by $v$ and integrating the resulting equation over $\mathbb R$, one has
 \begin{equation*}\begin{split}
\frac{\text{d}}{\text{d}t}|v|_2^2\leq  C\left(|u_x|_{\infty}|v|_{2}^2+|\rho|^{\gamma-\delta}_{\infty}|v|_{2}^2+|\rho|^{\gamma-\delta}_{\infty}|u|_{2}|v|_{2}\right)
\le   C(1+|u_x|_{\infty} )|v|_{2}^2+C,
\end{split}
 \end{equation*}
which, along with Lemma \ref{l4.6} and the Gronwall inequality,  yields that
\begin{equation}\label{e4.35}
|v(t,\cdot)|_2\leq C \quad {\rm{for}}\ \ 0\leq t\leq T.
\end{equation}
It thus follows from \eqref{e4.35}, Lemmas \ref{l4.4}-\ref{l4.6} and the Sobolev embedding theorem  that 
\begin{equation*}
\begin{split}
|\psi(t,\cdot)|_{2}\leq C\left(|v(t,\cdot)|_{2}+|u(t,\cdot)|_{2}\right)\leq C\quad {\rm{for}}&\ \ 0\leq t\leq T,\\
|\psi(t,\cdot)|_{4}\leq C\left(|v(t,\cdot)|_{4}+|u(t,\cdot)|_{4}\right)\leq C\quad {\rm{for}}&\ \ 0\leq t\leq T.
\end{split}
\end{equation*}

The proof of Lemma \ref{l4.7} is complete. 
 \end{proof}

Now we are ready to show the second order estimates of $u$.
\begin{lem}\label{l4.8}
Assume \eqref{zuizhong} additionally. Then for any $T>0$,  it holds that 
 \begin{equation*}
|u_t(t,\cdot)|^2_2+|u_{xx}(t,\cdot)|_2^2+|\rho^{\delta-1} u_{xx}(t,\cdot)|_2^2+\int_0^t |\rho^{\frac{\delta-1}{2}}u_{tx}(s,\cdot)|^2_2 {\rm{d}}s  \le C \quad {\rm{for}}\ \ 0\leq t\leq T.
\end{equation*}

\end{lem}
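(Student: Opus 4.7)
The plan is to differentiate the reformulated momentum equation \eqref{e4.14} in time, test the result against $u_t$, and close the resulting differential inequality by Gronwall. The $L^\infty_t L^2_x$ estimates on $u_{xx}$ and $\rho^{\delta-1}u_{xx}$ are then recovered by viewing \eqref{e4.14} as an algebraic identity.

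For the case $0<\delta<1$, $\partial_t$-differentiating \eqref{e4.14} produces
\begin{equation*}
u_{tt}+u_t u_x+u u_{tx}+A\gamma(\rho^{\gamma-2}\rho_x)_t-\alpha(\rho^{\delta-1})_t u_{xx}-\alpha\rho^{\delta-1}u_{txx}=\alpha\psi_t u_x+\alpha\psi u_{tx}.
\end{equation*}
Multiplying by $u_t$, integrating over $\mathbb R$, and eliminating all density time-derivatives via the continuity equation and the transport equation for $\psi$ in \eqref{e2.2} (and writing $\rho^{\delta-2}\rho_x=\alpha^{-1}(v-u)$ via \eqref{ev}) yields an identity of the form
\begin{equation*}
\tfrac{1}{2}\tfrac{d}{dt}|u_t|_2^2+\alpha|\rho^{(\delta-1)/2}u_{tx}|_2^2=\mathrm R(t).
\end{equation*}
To estimate $\mathrm R(t)$, I would use the pointwise and time-integrable bounds from Lemmas~\ref{l4.3}--\ref{l4.7} ($|\rho|_\infty,|v|_\infty,|\psi|_2,|\psi|_4\le C$ and $|u_x|_\infty,|\rho^{(\delta-1)/2}u_x|_2^2\in L^1([0,T])$), together with the elliptic-type bound already used in \eqref{e-1.290},
\begin{equation*}
|u_{xx}|_2\le C(1+|u_t|_2+|u_x|_2^{3/2})\le C(1+|u_t|_2),
\end{equation*}
to handle the quadratic-in-$u_{xx}$ contributions; here the last inequality invokes $|u_x|_2\le|\rho|_\infty^{(1-\delta)/2}|\rho^{(\delta-1)/2}u_x|_2\le C$ from Lemmas~\ref{l4.3} and \ref{l4.5}. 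Cauchy--Schwarz absorbs small multiples of $|\rho^{(\delta-1)/2}u_{tx}|_2^2$ into the left and produces
\begin{equation*}
\tfrac{d}{dt}|u_t|_2^2+\alpha|\rho^{(\delta-1)/2}u_{tx}|_2^2\le C(1+F(t))(1+|u_t|_2^2),\quad F\in L^1([0,T]).
\end{equation*}

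The initial value $|u_t(0,\cdot)|_2$ is finite: evaluating \eqref{e4.14} at $t=0$ and using the compatibility condition \eqref{e1.8}, in particular $\alpha u_{0xx}=\rho_0^{1-\delta}g_2$ with $g_2\in L^2$, one bounds $u_t|_{t=0}$ in $L^2$ by the initial data. Gronwall then yields $|u_t|_{L^\infty([0,T];L^2)}+|\rho^{(\delta-1)/2}u_{tx}|_{L^2([0,T];L^2)}\le C$. The elliptic bound above then gives $|u_{xx}|_2\le C$ pointwise in $t$, and the identity
\begin{equation*}
\alpha\rho^{\delta-1}u_{xx}=u_t+uu_x+\tfrac{A\gamma}{\alpha}\rho^{\gamma-\delta}(v-u)-\delta(v-u)u_x
\end{equation*}
together with Lemmas~\ref{l4.1}--\ref{l4.7} delivers $|\rho^{\delta-1}u_{xx}|_2\le C$. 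The $\delta=1$ case runs in parallel but is simpler, since $\rho^{\delta-1}\equiv 1$ makes the elliptic and weighted estimates coincide.

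The main obstacle is the pair of remainder terms $-\alpha\int(\rho^{\delta-1})_t u_{xx} u_t\,dx$ and $\alpha\int\psi_t u_x u_t\,dx$: through the source $-\delta\rho^{\delta-1}u_{xx}$ in the $\psi$-equation both generate pieces quadratic in $u_{xx}$, and a careless estimate would force super-quadratic Gronwall growth in $|u_t|_2^2$. The fix is to combine the elliptic control $|u_{xx}|_2\le C(1+|u_t|_2)$ with the pointwise bound on $v-u=\alpha\rho^{\delta-2}\rho_x$ from Lemma~\ref{l4.4} and the $L^1([0,T])$-integrability of $|u_x|_\infty$ from Lemma~\ref{l4.6}, ensuring each such term is absorbed either into $F(t)(1+|u_t|_2^2)$ with $F\in L^1([0,T])$ or into a small multiple of the dissipation $|\rho^{(\delta-1)/2}u_{tx}|_2^2$.
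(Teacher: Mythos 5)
Your proposal follows essentially the same route as the paper's proof: differentiate the momentum equation in time, test against $u_t$, eliminate the density time-derivatives via the continuity and $\psi$-equations and the effective velocity $v=u+\alpha\rho^{\delta-2}\rho_x$, close a linear Gronwall inequality with $L^1([0,T])$-in-time coefficients using the elliptic bound $|u_{xx}|_2\le C(1+|u_t|_2)$, and recover $\limsup_{\tau\to 0}|u_t(\tau,\cdot)|_2$ from the compatibility condition $\eqref{e1.8}_2$, exactly as in the treatment of $\text{J}_8$--$\text{J}_{12}$. The only quibble is a citation slip: the uniform-in-time bound $|\rho^{\frac{\delta-1}{2}}u_x|_2\le C$ that you invoke to control $|u_x|_2$ is supplied by Lemma \ref{l4.6}, not Lemma \ref{l4.5}.
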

 
\begin{proof} 
\textbf{Case 1:} $0<\delta<1$.
 First,  according to  Lemmas \ref{l4.3},  \ref{l4.5}-\ref{l4.7} and the Sobolev embedding theorem, one has
\begin{equation*}
\begin{split}
&|\phi_x|_4=|A\gamma\rho^{\gamma-\delta}\rho^{\delta-2}\rho_x|_4\\
\leq & C|\rho|^{\gamma-\delta}_{\infty} |\rho^{\delta-2}\rho_x|_4\leq C(|v|_4+|u|_4)\leq C.
\end{split}
\end{equation*}
Then it follows from $\eqref{e2.2}_1$ that 
\[
|\phi_t|_2\le C\left(|u|_4|\phi_x|_4+|\phi|_\infty|u_x|_2\right)\leq C.
\]
According to \eqref{e-1.290} and Lemma \ref{l4.6}, one has
\begin{equation}\label{uxxl1}
\begin{split}
|u_{xx}|_2\leq &  C\big(1+|u_t|_{2}+|u_x|_{2}^{\frac32}\big)\le C\big(1+|u_t|_2\big),\\
|\rho^{\delta-1}u_{xx}|_{2}\leq & C\big(1+|u_t|_{2}+|u_x|_{2}^{\frac32}\big)+\frac12|u_{xx}|_{2}\le C\big(1+|u_t|_2\big).
\end{split}
\end{equation}

Second, differentiating $(\ref{e2.2})_2$ with respect to $t$, 
multiplying the resulting equation by $u_t$ and integrating over $\mathbb{R}$, one has 
\begin{equation}\label{util2}
\begin{split}
&\frac{1}{2} \frac{\text{d}}{\text{d}t}|u_t|^2_2+\alpha|\rho^{\frac{\delta-1}{2}}u_{tx}|^2_2\\
=&\int \Big(\alpha\big(\rho^{\delta-1}\big)_t u_{xx}-\alpha \big(\rho^{\delta-1}\big)_x u_{tx}+\alpha (\psi u_x)_t -(u u_x)_t - \phi_{tx} \Big)u_t {\rm{d}}x \triangleq  \sum_{i=8}^{12}\text{J}_i.
\end{split}
\end{equation}

According to the equations $\eqref{e1.1}_1$ and $\eqref{e2.2}_3$, H\"older's inequality, Young's inequality  and Lemmas \ref{l4.3} and  \ref{l4.5}-\ref{l4.7}, one gets
  \begin{equation}\label{j18}
\begin{split}
\text{J}_8=&\alpha\int  \big(\rho^{\delta-1}\big)_t u_{xx} u_t{\rm{d}}x 
=\alpha(\delta-1)\int  \rho^{\delta-2}\rho_t u_{xx} u_t{\rm{d}}x \\
= &-\alpha(\delta-1)\int \Big( \rho^{\delta-2}\rho_x u +\rho^{\delta-1}  u_x\Big)u_{xx} u_t{\rm{d}}x \\
\le &C\Big( |\rho^{\delta-2}\rho_x|_4 |u|_{4} |u_{xx}|_2  + \big|\rho^{\frac{\delta-1}{2}} u_x\big|_2 \big|\rho^{\frac{\delta-1}{2}} u_{xx}\big|_2\Big)|u_t|_{\infty}\\
\le &C\Big( |u_{xx}|_2  + \big|\rho^{\frac{\delta-1}{2}} u_{xx}\big|_2\Big)|u_t|_2^{\frac12} |u_{tx}|_2^{\frac12}\\
\le& C\Big( |u_t|_2^2 + | \rho^{\frac{\delta-1}{2}} u_{xx}|_2^2\Big)+\frac{\alpha}{8}|\rho^{\frac{\delta-1}{2}}u_{tx}|^2_2,\\
\text{J}_9=&-\alpha\int \big(\rho^{\delta-1}\big)_x u_{tx}u_t{\rm{d}}x=-\alpha(\delta-1)\int \rho^{\delta-2} \rho_x u_{tx}u_t {\rm{d}}x\\
\leq &C| \rho^{\delta-2} \rho_x|_4 |u_{tx}|_2|u_t|_4\\
\le & C|u_{tx}|_2^{\frac54}|u_t|_2^{\frac34}
\leq C|u_t|_2^2+ \frac{\alpha}{8}|\rho^{\frac{\delta-1}{2}}u_{tx}|^2_2,
\end{split}
\end{equation}
where one has used the facts
$$
|u_t|_\infty\leq C|u_{t}|_2^{\frac12}|u_{tx}|_2^{\frac12}\quad \text{and} \quad |u_t|_4\leq C|u_{tx}|_2^{\frac14}|u_t|_2^{\frac34}.
$$
Similarly, via the integration by parts, one has 
 \begin{equation}\label{j20}
\begin{split}
\text{J}_{10}=&\alpha\int (\psi u_x)_t u_t {\rm{d}}x \\
=&\alpha\int \Big(\psi u_{tx} u_t-\big((u \psi)_x+(\delta-1)\psi u_x+\delta\rho^{\delta-1} u_{xx}\big)u_x u_t\Big){\rm{d}}x\\
\leq &C\Big(|\psi|_4\big(|u_{tx}|_2|u_t|_{4}+|u|_\infty|u_{xx}|_2|u_t|_{4}+ |u|_4|u_x|_{\infty} |u_{tx}|_2\\
&+|u_x|_2|u_t|_4 |u_x|_{\infty}\big)+|\rho^{\delta-1}u_{xx}|_2|u_x|_2|u_t|_\infty\Big)\\
\leq &C\Big(1+ |u_t|_2^2 + | \rho^{\delta-1}u_{xx}|_2^2\Big)+\frac{\alpha}{8}|\rho^{\frac{\delta-1}{2}}u_{tx}|^2_2,\\
\text{J}_{11}=&-\int (u u_x)_t u_t {\rm{d}}x 
=- \int( u u_{tx} u_t+u_t^2 u_x ){\rm{d}}x\\
\le & C\big( |u|_{\infty} |u_{tx}|_2 |u_t|_2+ |u_x|_{\infty} |u_t|_2^2\big)\\
\leq & C\Big(1+|u_{xx}|_2^{\frac12}\Big) |u_t|_2^2+\frac{\alpha}{8}|\rho^{\frac{\delta-1}{2}}u_{tx}|^2_2,\\
\text{J}_{12}=& -\int \phi_{tx}  u_t{\rm{d}}x = \int \phi_t  u_{tx}{\rm{d}}x \\
\le& C |\phi_t|_2 |u_{tx}|_2\le C+\frac{\alpha}{8}|\rho^{\frac{\delta-1}{2}}u_{tx}|^2_2.
\end{split}
\end{equation}

Substituting \eqref{j18}-\eqref{j20} into \eqref{util2}, one can obtain 
\begin{equation}\label{lg}
 \frac{\text{d}}{\text{d}t}|u_t|^2_2+|\rho^{\frac{\delta-1}{2}}u_{tx}|^2_2\le C \Big(1+\big(1+|u_{xx}|_2^{\frac12} \big)| u_t|^2_2+ | \rho^{\delta-1} u_{xx}|_2^2\Big).
\end{equation}
Integrating \eqref{lg} over $(\tau, t)(\tau\in (0,t))$, one has 
\begin{equation}\label{lg1}
\begin{split}
&|u_t(t,\cdot)|_2^2+\int_\tau^t |\rho^{\frac{\delta-1}{2}}u_{tx}(s,\cdot)|_2^2 {\rm{d}}s\\
\leq& |u_t(\tau,\cdot)|_2^2+C\int_\tau^t\Big(1+\big(1+|u_{xx}|_2^{\frac12} \big)| u_t|^2_2+ | \rho^{\delta-1} u_{xx}|_2^2\Big)(s,\cdot) {\rm{d}}s.
\end{split}
\end{equation}
It follows from $\eqref{e2.2}_2$ that 
\begin{equation}
|u_t(\tau,\cdot)|_2\leq C\Big( |u|_\infty|u_x|_2+|\phi_x|_2+|\rho^{\delta-1}u_{xx}|_2+|\psi|_2|u_x|_\infty\Big)(\tau),
\end{equation}
which, together with the time continuity of $(\rho,u)$ and \eqref{e1.7}-\eqref{e1.8},  implies that
\begin{equation*}
\lim\sup_{\tau\to 0} |u_t(\tau,\cdot)|_2\leq C\big(|u_0|_\infty|\partial_x u_0|_2+|\partial_x\phi_0|_2+|g_2|_2+|\psi_0|_2|\partial_x u_0|_\infty\big)\leq C. 
\end{equation*}
Letting $\tau\to 0$ in \eqref{lg1} and using the Gronwall inequality, one can obtain
\[
|u_t(t,\cdot)|^2_2+\int_0^t |\rho^{\frac{\delta-1}{2}}u_{tx}(s,\cdot)|^2_2 \text{d}s \le C \quad {\rm{for}}\ \ 0\leq t\leq T.
\]
It thus follows from  \eqref{uxxl1} that
\[
|u_{xx}(t,\cdot)|_2+ |\rho^{\delta-1}u_{xx}(t,\cdot)|_2 \le C\big(1+|u_t(t,\cdot)|_2\big)\le C  \quad {\rm{for}}\ \ 0\leq t\leq T.
\]

\textbf{Case 2:} $\delta=1$. 
First,  according to  $(\ref{e2.1})_2$,  H\"older's inequality, Young's  inequality and  Lemmas \ref{l4.5}-\ref{l4.7},  one has
\begin{equation*}
\begin{split}
|u_{xx}|_2=&\frac{1}{\alpha}\Big| u_t+u u_x+\phi_x-\alpha\psi u_x\Big|_2\\
\le & C \big(|u_t|_2+|u|_{\infty} |u_x|_2+ |\phi_x|_2+|\psi |_4 |u_x|_{4}\big)\\
\le & C \big(1+|u_t|_2+ |u_x|_{2}^{\frac34} |u_{xx}|_2^{\frac14}\big)\\
\le & C \big(1+|u_t|_2\big)+\frac12|u_{xx}|_2,
\end{split}
\end{equation*}
which  implies that 
\begin{equation}\label{uxx}
\begin{split}
|u_{xx}|_{2}\le C(1+|u_t|_2).
\end{split}
\end{equation}

Second, differentiating $(\ref{e2.1})_2$ with respect to $t$, 
multiplying the resulting equation by $u_t$ and integrating over $\mathbb{R}$,  one has 
\begin{equation}\label{zhu38}
\frac{1}{2} \frac{\text{d}}{\text{d}t}|u_t|^2_2+\alpha|u_{tx}|^2_2
=-\int \Big(u u_x+ \phi_x - \alpha\psi u_x\Big)_t u_t \text{d}x\triangleq  \sum_{i=13}^{15}\text{J}_i.
\end{equation}

Then according to   H\"older's inequality, Young's  inequality, the Sobolev embedding theorem  and Lemmas \ref{l4.1}-\ref{l4.7}, one has 
\begin{equation*}
\begin{split}
\text{J}_{13}=&-\int  (u u_x)_t u_t \text{d}x
=-\int  \big( |u_t|^2 u_x  +u u_{tx} u_t\big)\text{d}x\\ 
\le & C\big(|u_x|_\infty| u_t|^2_2+|u|_{\infty} |u_{tx}|_2 |u_t|_2\big)\\
\le& C\big(1+|u_x|_\infty)| u_t|^2_2+\frac{\alpha}{8} |u_{tx}|_2^2,\\ 
\text{J}_{14}=&- \int  \phi_{tx}  u_t \text{d}x
=\int   \phi_t  u_{tx} \text{d}x\\
\le &C|\phi_t|_2|u_{tx}|_2\leq C+\frac{\alpha}{8} |u_{tx}|_2^2,\\
\text{J}_{15}=&\alpha\int (\psi u_x)_t u_t \text{d}x
=\alpha\int \Big( \psi u_{tx} u_t+\psi_t u_x u_t\Big)\text{d}x\\ 
=&\alpha\int \Big( \psi u_{tx} u_t-(u\psi)_x u_x u_t-u_{xx}u_x u_t\Big)\text{d}x\\ 
\le & C\Big(|\psi|_2  |u_{tx}|_2|u_t|_{\infty}+|u|_{\infty}|\psi|_2\big( |u_{xx}|_2|u_t|_{\infty}+|u_x|_{\infty}|u_{tx}|_2\big)\\
&+|u_x|_\infty |u_{xx}|_2| u_t|_2\Big)\\
\le & C\Big(  |u_t|_2^{\frac12}|u_{tx}|_2^{\frac32}+ |u_{xx}|_2|u_t|_2^{\frac12}|u_{tx}|_2^{\frac12}+|u_{xx}|_2^{\frac12}|u_{tx}|_2+ |u_{xx}|_2^{\frac32}| u_t|_2\Big)\\
\le &C\big(1+|u_{xx}|_2^2\big)\big(1+| u_t|^2_2\big)+\frac{\alpha}{8} |u_{tx}|_2^2,
\end{split}
\end{equation*}
which, along with  \eqref{uxx}-(\ref{zhu38}), yields that 
\begin{equation}\label{uxt}
\begin{split}
& \frac{\text{d}}{\text{d}t}|u_t|^2_2+|u_{tx}|^2_2\le C \big(1 +|u_{xx}|_2^2\big)\big(1+|u_t|^2_2\big)\leq  C \big(1+|u_t|^2_2\big)^2.
\end{split}
\end{equation}
Integrating (\ref{uxt}) over $(\tau,t)$ $(\tau \in( 0,t))$, one has
\begin{equation}\label{ut0}
\begin{split}
&|u_t(t,\cdot)|^2_2+\int_\tau^t| u_{tx}(s,\cdot)|^2_2\text{d}s
\le |u_t(\tau,\cdot)|^2_2+C\int_{\tau}^t  \big(1+|u_t(s,\cdot)|^2_2\big)^2\text{d}s.
\end{split}
\end{equation}
According to $(\ref{e2.1})_2$, one can obtain
\begin{equation*}
\begin{split}
|u_t(\tau,\cdot)|_2
\le & C\big(|u u_x|_2+|\phi_x|_2+|u_{xx}|_2+|\psi u_x|_2\big)(\tau)\\
\le & C\big( |u|_\infty | u_x|_2+| \phi_x|_2+|u_{xx}|_{2}+|\psi|_2|u_x|_\infty\big)(\tau),
\end{split}
\end{equation*}
which, along with the time continuity of $(\rho,u)$ and \eqref{id1}, yields that 
\begin{equation*}
\begin{split}
&\lim \sup_{\tau\rightarrow 0}|u_t(\tau,\cdot)|_2
\le C\big( |u_0|_\infty \| u_0\|_1+\| \phi_0\|_1+\|u_0\|_2+|\psi_0|_2 \|u_0\|_2\big)\le C.
\end{split}
\end{equation*}
Letting $\tau \rightarrow 0$ in (\ref{ut0}),  it follows from the Gronwall  inequality that
$$
|u_t(t,\cdot)|^2_2+|u_{xx}(t,\cdot)|_2^2+\int_0^t| u_{tx}(s,\cdot)|^2_2\text{d}s\leq C \quad {\rm{for}}\ \ 0\leq t\leq T.
$$

The proof of Lemma \ref{l4.8} is complete.
\end{proof}

\subsection{The  high  order  estimates of $\phi$ and $ \psi$}

The following lemma provides  the high order estimates of $\phi$ and $ \psi$.

\begin{lem}\label{l4.9}  
Assume \eqref{zuizhong} additionally. Then for any $T>0$,  it holds that 
\begin{equation*}
\begin{split}
&|\phi_{xx}(t,\cdot)|^2_2+|\phi_{tx}(t,\cdot)|^2_2+|\psi_x(t,\cdot)|_2^2+|\psi_t(t,\cdot)|^2_{2}\\
&+\int_{0}^{t}\big(|u_{xxx}|^2_2+|\rho^{\frac{\delta-1}{2}}u_{xxx}|_2^2+|\rho^{\delta-1}u_{xxx}|_2^2+|\phi_{tt}|^2_{2}\big)(s,\cdot){\rm{d}}s\leq C\quad {\rm{for}}\ \ 0\leq t\leq T.
\end{split}
\end{equation*}

 \end{lem}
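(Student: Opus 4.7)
The plan is to first estimate $|\psi_x|_2$ through the effective velocity $v=u+\frac{\alpha}{\delta}\psi$ (or $v=u+\alpha\psi$ when $\delta=1$) satisfying \eqref{e-1.16}; this detour avoids inserting a $u_{xxx}$ term into the $\psi$ energy estimate, which would otherwise force a circular argument against the degenerate bound on $u_{xxx}$. Differentiating \eqref{e-1.16} in $x$, pairing with $v_x$, integrating by parts so that $\int uv_{xx}v_x\,\mathrm dx=-\tfrac12\int u_xv_x^2\,\mathrm dx$, and using the algebraic identity $\rho^{\gamma-\delta-1}\rho_x=\tfrac{1}{\delta}\rho^{\gamma-2\delta+1}\psi$ together with $|\psi|_\infty=\tfrac{\delta}{\alpha}|v-u|_\infty\le C$ (from Lemma \ref{l4.4}), I expect to obtain
\[
\frac{\mathrm d}{\mathrm dt}|v_x|_2^2\le C(1+|u_x|_\infty)|v_x|_2^2+C,
\]
which closes by the Gronwall inequality and $\int_0^t|u_x|_\infty\,\mathrm ds\le C$ from Lemma \ref{l4.6}. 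Then $|\psi_x|_2\le C(|v_x|_2+|u_x|_2)\le C$.

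Next, differentiating the algebraic identity $\phi_x=\tfrac{A\gamma}{\delta}\rho^{\gamma-\delta}\psi$ once more yields
\[
\phi_{xx}=c_1\rho^{\gamma-2\delta+1}\psi^2+c_2\rho^{\gamma-\delta}\psi_x,
\]
which, combined with Lemma \ref{l4.7} ($|\psi|_4\le C$), Lemma \ref{l4.3} ($|\rho|_\infty\le C$), the exponent check $\gamma-2\delta+1\ge\tfrac32-\delta>0$ under \eqref{canshe}, and the first step, gives $|\phi_{xx}|_2\le C$. Then $\phi_{tx}=-u_x\phi_x-u\phi_{xx}-(\gamma-1)(\phi_xu_x+\phi u_{xx})$ and $\psi_t=-(u\psi)_x-(\delta-1)\psi u_x-a\delta\phi^{2e}u_{xx}$ (with the obvious analogue for $\delta=1$) are immediate $L^2$ bounds, invoking $|\rho^{\delta-1}u_{xx}|_2\le C$ from Lemma \ref{l4.8}.

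For the time-integrated $u_{xxx}$ bounds, I would differentiate $a\alpha\phi^{2e}u_{xx}=u_t+uu_x+\phi_x-\alpha\psi u_x$ in $x$ to obtain the pointwise identity
\[
a\alpha\phi^{2e}u_{xxx}=u_{tx}+u_x^2+uu_{xx}+\phi_{xx}-\alpha\psi_xu_x-\alpha\psi u_{xx}-\tilde c(v-u)u_{xx},
\]
where $(\phi^{2e})_x\sim\rho^{\delta-2}\rho_x\sim v-u$ by the same algebra as above. Since $\phi^{2e}\sim\rho^{\delta-1}$ and $\rho^{1-\delta}\le|\rho|_\infty^{1-\delta}\le C$, multiplying this identity by $\rho^{1-\delta}$, $\rho^{(1-\delta)/2}$, or leaving it as is produces pointwise formulas for $u_{xxx}$, $\rho^{(\delta-1)/2}u_{xxx}$, and $\rho^{\delta-1}u_{xxx}$; squaring $L^2$ norms and integrating in $t$, the only delicate piece is $\int_0^t|u_{tx}|_2^2\,\mathrm ds\le|\rho|_\infty^{1-\delta}\int_0^t|\rho^{(\delta-1)/2}u_{tx}|_2^2\,\mathrm ds\le C$ from Lemma \ref{l4.8}. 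For $\phi_{tt}$, differentiating the continuity equation in $t$ gives $\phi_{tt}=-u_t\phi_x-u\phi_{tx}-(\gamma-1)(\phi_tu_x+\phi u_{tx})$, and since $|u_t|_\infty^2\le|u_t|_2|u_{tx}|_2\in L^1_t$, squaring and integrating yields $\int_0^t|\phi_{tt}|_2^2\,\mathrm ds\le C$.

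The main obstacle is exactly the coupling I sidestep at the outset: a direct energy estimate on $\psi_x$ from the $\psi$-equation in \eqref{e2.2} would pair the dangerous term $a\delta\phi^{2e}u_{xxx}$ against $\psi_x$, and because $\phi^{2e}\sim\rho^{\delta-1}$ diverges in the far field while the only natural control on $u_{xxx}$ itself carries the weight $\rho^{\delta-1}$, no margin is left for that pairing. Routing the $\psi_x$ estimate through the effective velocity $v$ converts the singular viscous contribution into a bounded zero-order source of a transport equation, which is the conceptual mechanism driving the whole Lemma; the remaining secondary difficulty is merely the bookkeeping of the weight $\rho^{\delta-1}$ in the three variants of the $u_{xxx}$ bound, which is controlled uniformly since Lemma \ref{l4.3} yields $\rho^{1-\delta}\in L^\infty$.
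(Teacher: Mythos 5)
Your proposal is correct, but the key step is organized differently from the paper. For the central bound on $|\psi_x|_2$ and $|\phi_{xx}|_2$, the paper works directly with the reformulated system: it differentiates $\eqref{e2.2}_3$ in $x$, pairs with $\psi_x$, and accepts the term $\int \rho^{\delta-1}u_{xxx}\psi_x\,{\rm d}x$; it simultaneously runs a transport estimate for $\phi_{xx}$ from $\eqref{e2.2}_1$, and closes the pair via the elliptic-type identity $|\rho^{\delta-1}u_{xxx}|_2\le C(1+|u_{tx}|_2+|\psi_x|_2+|\phi_{xx}|_2)$ from $\eqref{e2.2}_2$ followed by a combined Gronwall argument with the integrable forcing $|u_{tx}|_2^2$. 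You instead differentiate the effective-velocity equation \eqref{e-1.16}, which is a pure transport equation with a zero-order source and contains no $u_{xxx}$ at all, obtain $|v_x|_2\le C$ by Gronwall with $\int_0^t|u_x|_\infty\,{\rm d}s\le C$, and then recover $\psi_x=\tfrac{\delta}{\alpha}(v_x-u_x)$ and $\phi_{xx}$ algebraically (your exponent check $\gamma-2\delta+1\ge \tfrac32-\delta>0$ under \eqref{canshe} is exactly what makes the algebraic route for $\phi_{xx}$ legitimate; the paper itself uses this algebraic identity only in the $\delta=1$ case and a transport estimate for $0<\delta<1$). Both closings are valid; yours extends the effective-velocity philosophy of Lemmas \ref{l4.4} and \ref{l4.7} one derivative higher and avoids the coupled Gronwall, at the mild cost of needing $|v_{0,x}|_2<\infty$ and $|v-u|_2=\tfrac{\alpha}{\delta}|\psi|_2\le C$, both already available. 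The remaining steps ($u_{xxx}$ in its three weighted forms from differentiating the momentum equation, $\psi_t$, $\phi_{tx}$, $\phi_{tt}$) coincide in substance with the paper's.

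One correction to your commentary: the ``circularity'' you cite as the reason to avoid the direct energy estimate on $\psi_x$ is not a genuine obstruction. The dangerous pairing $\int\rho^{\delta-1}u_{xxx}\psi_x\,{\rm d}x$ is bounded by $|\rho^{\delta-1}u_{xxx}|_2|\psi_x|_2$, and since the elliptic bound on $|\rho^{\delta-1}u_{xxx}|_2$ is \emph{linear} in $|\psi_x|_2$ and $|\phi_{xx}|_2$ with coefficients already controlled at this stage, substituting it back yields a closable quadratic Gronwall inequality --- this is precisely the paper's proof. So your detour through $v$ is a clean alternative, not a necessity.
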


 \begin{proof} 
 \textbf{Case 1:} $0<\delta<1.$ 
 The proof will be divided into two steps.
 
\textbf{Step 1:} the estimate on $u_{xxx}$.
First, it follows from $\eqref{e2.2}_2$,  H\"older's inequality, Young's inequality  and Lemmas \ref{l4.1}-\ref{l4.8} that 
 \begin{equation}\label{uxxxl1}
\begin{split}
|u_{xxx}|_2 = & |\rho^{1-\delta}\rho^{\delta-1}u_{xxx}|_2 \le C|\rho|_{\infty}^{1-\delta}|\rho^{\delta-1}u_{xxx}|_2\le C|\rho^{\delta-1}u_{xxx}|_2 \\
\le& C\big(|u_{tx}|_{2}+|(u  u_x)_x|_2+|\phi_{xx}|_2+|(\psi u_x)_x|_2+|(\rho^{\delta-1})_x u_{xx}|_2\big)\\
\le & C\big(|u_{tx}|_2+|u|_{\infty} | u_{xx}|_{2}+| u_x|_{\infty}| u_x|_2+|\phi_{xx}|_2+|\psi|_{\infty}|u_{xx}|_{2}+|\psi_x|_2 | u_x|_{\infty}\big)\\
\le & C\big(1+|u_{tx}|_2+| \psi_x|_2+|\phi_{xx}|_2\big).
\end{split}
\end{equation}

Second, we consider   the last two terms  on the right-hand side of \eqref{uxxxl1}. 
For $|\psi_x|_2$, differentiating $(\ref{e2.2})_3$ with respect to $x$, multiplying the resulting equation  by $\psi_x$ and integrating over $\mathbb{R}$, one can obtain 
 \begin{equation}\label{he1}
 \begin{split}
\frac{\text{d}}{\text{d}t}| \psi_x|^2_2
\le & C\int \big( |u_x \psi_x \psi_x| +  |u_{xx} \psi \psi_x| +| \rho^{\delta-1}u_{xxx} \psi_x| \big)\text{d} x \\
\le& C\big(  1+ |\psi_x|_2^2 +|\rho^{\delta-1} u_{xxx}|_2^2   \big).
\end{split}
\end{equation}

For $|\phi_{xx}|_2$. Applying $\partial_{x}^2$ to $\eqref{e2.2}_1$,  multiplying the resulting equation by $\phi_{xx}$  and integrating over $\mathbb{R}$, one has
\begin{equation}\label{phixxl1}
\begin{split}
\frac{\text{d}}{\text{d}t}| \phi_{xx}|^2_2
\le& C\int  \big(|u_{xx} \phi_x | + |u_x \phi_{xx}|+|\phi u_{xxx}|\big) |\phi_{xx}|{\rm{d}}x \\
\le &C  \big(|u_{xx} |_2 |\phi_x |_{\infty} + |u_x |_{\infty} |\phi_{xx}|_2+|\phi|_{\infty}| u_{xxx}|_2\big) |\phi_{xx}|_2\\
\le &C  \big( 1+|\phi_{xx}|_2^2+| u_{xxx}|_2^2\big). 
\end{split}
\end{equation}

According to \eqref{uxxxl1}-\eqref{phixxl1}, one can obtain 
 \begin{equation*}
\frac{\text{d}}{\text{d}t}\big(| \psi_x|^2_2+| \phi_{xx}|^2_2\big)
\le C\big(  1+|u_{tx}|_2^2+ |\psi_x|_2^2 +| \phi_{xx}|_2^2    \big),
\end{equation*}
which, along with the Gronwall  inequality and Lemma \ref{l4.8}, yields that
\[
|\psi_x(t,\cdot)|_2+|\phi_{xx}(t,\cdot)|_2\le C \quad {\rm{for}}\ \ 0\leq t\leq T.
\]
At last, it  thus follows from \eqref{uxxxl1} and Lemma \ref{l4.8} that
\begin{equation}
\begin{split}
&\int_{0}^{t}\big(|u_{xxx}|^2_2 + |\rho^{\delta-1}u_{xxx}|_2^2+|\rho^{\frac{\delta-1}{2}}u_{xxx}|_2^2\big)(s,\cdot) \text{d} s\\
 \le &C \int_{0}^{t}\big(  1+ |\psi_x|_2^2+|u_{tx}|_2^2+| \phi_{xx}|_2^2   \big) (s,\cdot)\text{d} s\le C\quad {\rm{for}}\ \ 0\leq t\leq T.
\end{split}
\end{equation}

\textbf{Step 2:} estimates on time derivatives of the density. First, according to   $\eqref{e2.2}_3$,  H\"older's  inequality and Lemma \ref{l4.8}, one has
\begin{equation*}
\begin{split}
|\psi_t|_2\le& C\big(|\rho^{\delta-1}u_{xx}|_2+| u \psi_x|_2+|u_x \psi|_2\big)\\
\le& C\big(1+|u|_{\infty}|\psi_x|_2+| u_x|_{\infty}|\psi|_2\big)\le C.
\end{split}
\end{equation*}

Second, according to  $\eqref{e2.2}_1$  and  H\"older's  inequality, one has
\begin{equation*}
\begin{split}
|\phi_{tx}|_2 \le & C\big(|u\phi_{xx}|_2+|u_x \phi_x|_2+|\phi u_{xx}|_2\big)\\
 \le& C\big(| u|_{\infty}  |\phi_{xx}|_2+| u_x|_{\infty} |\phi_x|_2+|\phi|_{\infty} |u_{xx}|_2\big)\le C,\\
|\phi_{tt}|_2\le& C(|u_t\phi_x|_2+|u \phi_{tx}|_2+|\phi_t u_x|_2+|\phi u_{tx}|_2)\\
\le & C(|u_t|_2|\phi_x|_{\infty}+|u|_{\infty}|\phi_{tx}|_2+|\phi_t|_2| u_x|_{\infty}+|\phi|_{\infty}| u_{tx}|_2)\\
\le & C\big(1+|u_{tx}|_2\big), 
\end{split}
\end{equation*}
which, along with Lemma \ref{l4.8}, yields that
\[
 \int_{0}^{t}|\phi_{tt}(s,\cdot)|^2_{2}\text{d}s \leq C\quad {\rm{for}}\ \ 0\leq t\le T.
\]

 \textbf{Case 2:} $\delta=1$. 
  The proof will be  divided into two steps.\par

\textbf{Step 1:} the estimate on $u_{xxx}$.
First, it follows  from $\eqref{e2.1}_2$, H\"older's inequality and Lemmas \ref{l4.1}-\ref{l4.8} that 
 \begin{equation}\label{uxxx}
\begin{split}
|u_{xxx}|_{2} 
\le& C\big(|u_{tx}|_{2}+|(u  u_x)_x|_2+|\phi_{xx}|_2+|(\psi u_x)_x|_2\big)\\
\le & C\big(|u_{tx}|_2+|u|_{\infty} | u_{xx}|_{2}+| u_x|_{\infty}| u_x|_2+|\phi_{xx}|_2\\
&+|\psi|_{\infty}|u_{xx}|_{2}+|\psi_x|_2 | u_x|_{\infty}\big)\\
\le & C\big(1+|u_{tx}|_2+| \psi_x|_2+| \phi_{xx}|_{2} \big).
\end{split}
\end{equation}

Second, for $|\psi_x|_2$, differentiating $(\ref{e2.1})_3$ with respect to $x$,  multiplying the resulting equation by $\psi_x$ and using the integration by part, one can obtain
 \begin{equation}
 \begin{split}
\frac{\text{d}}{\text{d}t}| \psi_x|^2_2
\le & C\int \big( |u_x \psi_x \psi_x| +  |u_{xx} \psi \psi_x| +| u_{xxx} \psi_x| \big)\text{d} x\\
\le& C\big(  |u_x |_{\infty} |\psi_x|_2^2+ |u_{xx}|_2 |\psi|_{\infty} |\psi_x|_2 +| u_{xxx}|_2 |\psi_x|_2   \big)\\
\le& C\big(  1+ |\psi_x|_2^2 +| u_{xxx}|_2^2   \big).
\end{split}
\end{equation}
For $|\phi_{xx}|_2$,  by the definition of $\phi$ and $\psi$, one has
\begin{equation}\label{phixx}
\begin{split}
|\phi_{xx}|_2=&(\gamma-1)| (\psi \phi)_x|_2\\
\le & C\big( |\psi_x|_2 |\phi|_{\infty} +|\psi|_{\infty} |\phi_x|_2\big)\le C(1+ |\psi_x|_2).
\end{split}
\end{equation}
Thus, combining \eqref{uxxx}-\eqref{phixx} together, one  deduces that
 \begin{equation}\label{psix}
\frac{\text{d}}{\text{d}t}| \psi_x|^2_2
\le C\big(  1+ |\psi_x|_2^2 +| u_{tx}|_2^2   \big),
\end{equation}
which, together with the Gronwall inequality and Lemma \ref{l4.8}, yields that
 \begin{equation}\label{pnn}
|\psi_x(t,\cdot)|_2\le C \quad {\rm{for}}\ \ 0\leq t\leq T.
\end{equation}
It thus follows from \eqref{uxxx},  \eqref{phixx} and \eqref{pnn} that 
\begin{equation*}
\begin{split}
|\phi_{xx}(t,\cdot)|_2\le C(1+|\psi_x(t,\cdot)|_2)\le C \quad {\rm{for}}& \ \ 0\leq t\leq T,\\
\int_{0}^{t}|u_{xxx}(s,\cdot)|^2_2 \text{d} s \le C \int_{0}^{t}\big(  1 +| u_{tx}(s,\cdot)|_2^2   \big) \text{d} s\le C \quad {\rm{for}}& \ \ 0\leq t\leq T.
\end{split}
\end{equation*}

\textbf{Step 2:} estimates on time derivatives of the density.
The boundedness of $|\psi_t(t,\cdot)|_2$, $|\phi_{tx}(t,\cdot)|_2$ and  $\int_0^t |\phi_{tt}(s,\cdot)|_2^2 {\rm{d}}s $  can be shown via similar argument as in the case $0<\delta<1$. 

The proof of Lemma \ref{l4.9} is complete.
\end{proof}

\subsection{ Time-weighted energy estimates of  $u$}
At last,  we establish the time-weighted energy estimates of  $u$.

\begin{lem}\label{l4.10}  
Assume \eqref{zuizhong} additionally. Then for any $T>0$,  it holds that 
\begin{equation*}
\begin{split}
&t|\rho^{\frac{\delta-1}{2}} u_{tx}(t,\cdot)|^2_2+\int_{0}^{t} s \big(|u_{tt}|^2_2+ |u_{txx}|_2^2 \big)(s,\cdot){\rm{d}}s\leq C\quad {\rm{for}}\ \ 0\leq t\leq T.
\end{split}
\end{equation*}
\end{lem}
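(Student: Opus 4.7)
The plan is to carry out a time-weighted version of the energy estimate from Lemma \ref{l4.8}, obtained by differentiating the momentum equation a second time in $t$ and testing against $u_{tt}$, with the weight $t$ absorbing the fact that $u_{tx}(0,\cdot)$ and $u_{tt}(0,\cdot)$ are not controlled by the given initial regularity. Concretely, for the case $0<\delta<1$, I would differentiate $(\ref{e2.2})_2$ in $t$ to get
\begin{equation*}
u_{tt}+(uu_x)_t+\phi_{tx}-a\alpha\bigl(\phi^{2e}u_{xx}\bigr)_t=\alpha(\psi u_x)_t,
\end{equation*}
multiply by $u_{tt}$ and integrate in $x$. The crucial viscous term integrates by parts as
\begin{equation*}
-a\alpha\int\phi^{2e}u_{txx}u_{tt}\,\mathrm{d}x=\frac{a\alpha}{2}\frac{\mathrm{d}}{\mathrm{d}t}|\phi^{e}u_{tx}|_2^2-\frac{a\alpha}{2}\int(\phi^{2e})_t u_{tx}^2\,\mathrm{d}x+a\alpha\int(\phi^{2e})_x u_{tx}u_{tt}\,\mathrm{d}x,
\end{equation*}
producing the desired dissipation $\frac{1}{2}\frac{\mathrm{d}}{\mathrm{d}t}|\rho^{\frac{\delta-1}{2}}u_{tx}|_2^2$ together with the coercive $|u_{tt}|_2^2$ on the left. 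For the case $\delta=1$ the analogous computation applied to $(\ref{e2.1})_2$ is simpler since the viscous coefficient is constant.

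The right-hand side is then dominated using the uniform bounds already at hand: Lemma \ref{l4.3} controls $|\rho|_\infty$, Lemma \ref{l4.7} controls $|\psi|_4$ and $|\psi|_2$, Lemma \ref{l4.8} controls $|u_t|_2$, $|u_{xx}|_2$, $|\rho^{\delta-1}u_{xx}|_2$, and Lemma \ref{l4.9} controls $|\phi_{xx}|_2$, $|\phi_{tx}|_2$, $|\psi_x|_2$ and $|\psi_t|_2$. The dangerous terms are the ones containing $(\phi^{2e})_t=2e\phi^{2e-1}\phi_t$, $\psi_t u_x$, and $\phi_{tx}$ after another integration by parts; each of them will be estimated by Cauchy--Schwarz and Sobolev embeddings so as to leave a bound of the form
\begin{equation*}
\frac{\mathrm{d}}{\mathrm{d}t}\bigl|\rho^{\frac{\delta-1}{2}}u_{tx}\bigr|_2^2+|u_{tt}|_2^2\leq C\bigl(1+G(t)\bigr)\bigl|\rho^{\frac{\delta-1}{2}}u_{tx}\bigr|_2^2+C H(t),
\end{equation*}
where $G,H\in L^1(0,T)$ thanks to the preceding lemmas (in particular $|u_{tx}|_2^2\in L^1$).

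Multiplying this differential inequality by $s$ and using the identity $s\frac{\mathrm{d}}{\mathrm{d}s}X=\frac{\mathrm{d}}{\mathrm{d}s}(sX)-X$, the boundary contribution at $s=0$ vanishes, the leftover $-|\rho^{\frac{\delta-1}{2}}u_{tx}|_2^2$ is integrable in time by Lemma \ref{l4.8}, and Gronwall then yields
\begin{equation*}
t\bigl|\rho^{\frac{\delta-1}{2}}u_{tx}(t,\cdot)\bigr|_2^2+\int_0^t s|u_{tt}|_2^2(s,\cdot)\,\mathrm{d}s\leq C.
\end{equation*}
Finally, the bound on $\int_0^t s|u_{txx}|_2^2\,\mathrm{d}s$ is obtained by solving algebraically for $\phi^{2e}u_{txx}$ in the $t$-differentiated equation, taking $L^2$ norms, and converting $|\rho^{\delta-1}u_{txx}|_2$ into $|u_{txx}|_2$ via the uniform upper bound $|\rho|_\infty\leq C$ from Lemma \ref{l4.3}; all resulting quantities are then integrable against $s\,\mathrm{d}s$ by the bounds already produced. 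I expect the main obstacle to be the control of the term arising from $(\phi^{2e})_t u_{xx}u_{tt}$: after using $\phi_t=-u\phi_x-(\gamma-1)\phi u_x$ it splits into a convective piece and a compressive piece, and the latter must be handled by balancing $|u_x|_\infty$, $|\rho^{\delta-1}u_{xx}|_2$ and a small fraction of $|u_{tt}|_2^2$, while keeping the remaining coefficient in $L^1_t$.
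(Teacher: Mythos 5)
Your plan is correct and follows essentially the same route as the paper: differentiate the momentum equation in $t$, test against $u_{tt}$, integrate the viscous term by parts to produce $\frac{\alpha}{2}\frac{\mathrm{d}}{\mathrm{d}t}|\rho^{\frac{\delta-1}{2}}u_{tx}|_2^2$, close the resulting inequality $\frac{\mathrm{d}}{\mathrm{d}t}|\rho^{\frac{\delta-1}{2}}u_{tx}|_2^2+|u_{tt}|_2^2\le C(1+|\rho^{\frac{\delta-1}{2}}u_{tx}|_2^2)$ with Lemmas \ref{l4.3}--\ref{l4.9}, multiply by $s$, and recover $u_{txx}$ algebraically from the equation using $|\rho|_\infty^{1-\delta}\le C$. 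The only point you should make explicit is the claim that ``the boundary contribution at $s=0$ vanishes'': since $\rho^{\frac{\delta-1}{2}}u_{tx}\in L^2([0,T];L^2)$, one integrates over $[\tau,t]$ and sends $\tau=s_k\to 0$ along a sequence with $s_k|\rho^{\frac{\delta-1}{2}}u_{tx}(s_k,\cdot)|_2^2\to 0$ (Lemma \ref{bjr}), exactly as the paper does.
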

\begin{proof}
\textbf{Case 1:} $0<\delta<1$.
First, differentiating $(\ref{e2.2})_2$ with respect to $t$, multiplying the resulting equation  by $ u_{tt}$ and integrating over $\mathbb{R}$, one arrives at
\begin{equation}\label{etrq}
\begin{split}
&\frac{\alpha}{2}\frac{\text{d}}{\text{d}t} |\rho^{\frac{\delta-1}{2}}u_{tx}|_2^2+ |u_{tt}|_2^2\\
=&\int \Big(\alpha(\rho^{\delta-1})_t u_{xx}-\alpha(\rho^{\delta-1})_x u_{tx}-(u u_x)_t\\
&-\phi_{tx}+\alpha(\psi u_x)_t\Big)u_{tt}\text{d}x+\frac{\alpha}{2}\int (\rho^{\delta-1})_t u_{tx}^2\text{d}x \triangleq \sum_{i=16}^{21} \text{J}_{i}.
\end{split}
\end{equation}

According to the equation  $\eqref{e1.1}_1$, H\"older's inequality, Lemma \ref{al1} and Lemmas \ref{l4.1}-\ref{l4.9}, one can obtain that  
\begin{equation*}
\begin{split}
\text{J}_{16}=&\alpha\int (\rho^{\delta-1})_t u_{xx} u_{tt}\text{d}x\\
=& -\alpha\int\big((\rho^{\delta-1})_x u+(\delta-1)\rho^{\delta-1}u_x\big)u_{xx}u_{tt}\text{d}x\\
\leq &C\big(|\psi|_\infty|u|_\infty|u_{xx}|_2+|\rho^{\delta-1}u_{xx}|_2|u_x|_\infty\big)|u_{tt}|_2\leq C+\frac18 |u_{tt}|_2^2,\\
\sum_{i=17}^{20}\text{J}_{i}=&\int \Big(-\alpha(\rho^{\delta-1})_x u_{tx}-(u u_x)_t-\phi_{tx}+\alpha(\psi u_x)_t\Big)u_{tt}\text{d}x\\
\leq &C\big(|u_x|_2|u_t|_\infty+|u|_\infty|u_{tx}|_2+|\phi_{tx}|_2+|\psi|_\infty|u_{tx}|_2+|\psi_t|_2|u_x|_\infty\big)|u_{tt}|_2\\
\leq &C(1+|\rho^{\frac{\delta-1}{2}}u_{tx}|_2^2)+\frac18|u_{tt}|_2^2,\\
\text{J}_{21}=&\frac{\alpha}{2}\int (\rho^{\delta-1})_t u_{tx}^2\text{d}x\\
\leq & C\big(|\psi|_\infty|u|_\infty|u_{tx}|_2^2+|\rho^{\frac{\delta-1}{2}}u_{tx}|_2^2|u_x|_\infty\big)
\leq C|\rho^{\frac{\delta-1}{2}}u_{tx}|_2^2,
\end{split}
\end{equation*}
which, along with (\ref{etrq}), implies that 
\begin{equation}\label{etrq1}
\frac{\text{d}}{\text{d}t}|\rho^{\frac{\delta-1}{2}} u_{tx}|_2^2+ |u_{tt}|_2^2\leq C\big(1+|\rho^{\frac{\delta-1}{2}}u_{tx}|_2^2\big).
\end{equation}

Second, multiplying \eqref{etrq1} by $s$ and integrating with respect to $s$ over $[\tau, t]$ for $\tau\in(0,t)$, one has 
\begin{equation}\label{etrq2}
t|\rho^{\frac{\delta-1}{2}} u_{tx}(t,\cdot)|_2^2+ \int_\tau^t s|u_{tt}(s,\cdot)|_2^2\text{d}s\leq \tau|\rho^{\frac{\delta-1}{2}}u_{tx}(\tau,\cdot)|_2^2+C.
\end{equation}
Recalling the definition of regular solutions in Definition \ref{def2}, one has $$\rho^{\frac{\delta-1}{2}} u_{tx}\in L^2([0,T];L^2),$$
which, along with the Lemma \ref{bjr} (see in Appendix A), implies that there exists a sequence $s_k$ such that 
\begin{equation}
s_k\to 0, \quad \text{and}\quad  s_k|\rho^{\frac{\delta-1}{2}} u_{tx}(s_k, \cdot)|_2^2\to 0 \quad \text{as}\quad k\to \infty.
\end{equation}
After choosing $\tau=s_k\to 0$  in  \eqref{etrq2}, one can obtain
\begin{equation}\label{etrq3}
t|\rho^{\frac{\delta-1}{2}} u_{tx}(t,\cdot)|_2^2+ \int_0^t s|u_{tt}(s,\cdot)|_2^2\text{d}s\leq C \quad {\rm{for}}\ \ 0\leq t\leq T.
\end{equation}

Second, according to $(\ref{e2.2})_2$ and the estimates in Lemmas \ref{l4.1}-\ref{l4.9}, one has
 \begin{equation*}
 \begin{split}
 |u_{txx}|_2 
 =& |\rho^{1-\delta} \rho^{\delta-1} u_{txx}|_2\le  |\rho|_{\infty}^{1-\delta} |\rho^{\delta-1} u_{txx}|_2\\
 \le & C\big(| (\rho^{\delta-1})_t u_{xx}|_2+|u_{tt}|_{2}+|(u  u_x)_t|_2+|\phi_{tx}|_2+|(\psi u_x)_t|_2\big)\\
\le & C\big(1+|\rho^{\delta-1} u_x u_{xx}|_2 +|\rho^{\delta-2}\rho_x u u_{xx}|_2+ |u_{tt}|_2+|u_t|_2 |u_x|_{\infty}\\
&+|u|_{\infty} | u_{tx}|_{2}+|\psi|_{\infty}|u_{tx}|_{2}+|\psi_t|_2 | u_x|_{\infty}\big)\\
\le & C\big(1 + |u_{tt}|_2+ | u_{tx}|_{2}\big),
 \end{split}
\end{equation*}
which, along with \eqref{etrq3}, implies that
\[
\int_{0}^{t} s  |u_{txx}(s,\cdot)|_2^2{\rm{d}}s\leq C\quad {\rm{for}}\ \ 0\leq t\leq T.
\]

\textbf{Case 2:} $\delta=1$.
Differentiating $(\ref{e2.1})_2$ with respect to $t$, multiplying the resulting equation  by $ u_{tt}$ and integrating over $\mathbb{R}$, one has

\begin{equation*}\label{etrqq}
\frac{\alpha}{2}\frac{\text{d}}{\text{d}t}|u_{tx}|_2^2+ |u_{tt}|_2^2
=\int \Big(-(u u_x)_t-\phi_{tx}+\alpha(\psi u_x)_t\Big)u_{tt}\text{d}x.
\end{equation*}

Via the similar argument for estimating $\text{J}_{18}$-$\text{J}_{20}$, one can get

\begin{equation}\label{etrq33}
t|u_{tx}(t,\cdot)|_2^2+ \int_0^t s\big(|u_{tt}|_2^2+|u_{txx}|_2^2\big)(s,\cdot)\text{d}s\leq C \quad {\rm{for}}\ \ 0\leq t\leq T.
\end{equation}

The proof of Lemma \ref{l4.10} is complete.
\end{proof}

\subsection{Proof of Theorem \ref{th2} }\label{se46}

Based on the local-in-time well-posedness obtained in Lemmas \ref{l2}-\ref{vsi}  and the global-in-time a priori estimates established in Lemmas \ref{l4.1}-\ref{l4.10}, now  we are ready to give the  proof of Theorem \ref{th2}.

\textbf{Step 1}: the global  well-posedness of regular solutions.  
 First, under the initial assumption \eqref{e1.7}-\eqref{e1.8},  according to  Lemmas \ref{l2}-\ref{vsi}, there exists a unique regular solution $(\rho, u)$ to the Cauchy problem \eqref{e1.1}-\eqref{e1.3}  in $[0,T_*]\times \mathbb R$  for some  $T_*>0$, which  satisfies  \eqref{er1} with $T$ replaced by $T_*$.
Let $T^*>0$ be the life span  of the regular solution satisfying \eqref{er1}  obtained  in Lemmas \ref{l2}-\ref{vsi}.  It is obvious that $T^*\ge T_*$. Then we claim that $T^*=\infty$. Otherwise, if $T^*<\infty$, 
according to  the uniform a priori estimates obtained in Lemmas \ref{l4.1}-\ref{l4.10} and the standard weak compactness theory, one can know that for any sequence $\{t_k\}_{k=1}^\infty$ satisfying $0<t_k<T^*$ and 
$$t_k\to T^* \quad \text{as}\quad k\to \infty,$$ 
there exists a subsequence $\{t_{1k}\}_{k=1}^\infty$ and functions $(\phi, u, \psi)(T^*,x)$ such that 
\begin{equation}\label{f2}
\begin{aligned}
    &\phi(t_{1k},x)\rightharpoonup  \phi(T^*,x) \quad \text{in\,\,} H^2 \quad \text{as}\,\, k\to\infty,\\ 
     &u(t_{1k},x)\rightharpoonup  u(T^*,x) \quad \text{in\,\,} H^2 \quad  \text{as}\,\, k\to\infty,\\
     &\psi(t_{1k},x)\rightharpoonup  \psi(T^*,x) \quad \text{in\,\,} H^1 \quad  \text{as}\,\, k\to\infty.
\end{aligned}
\end{equation}

Second, we want  to show that functions $(\phi, u, \psi)(T^*,x)$ satisfy all the initial assumptions shown  in Lemma \ref{l2}, which include \eqref{e1.7}-\eqref{e1.8} except $\rho_0\in L^1$ and the following relationship between $\phi$ and $\psi$: 
\begin{equation}\label{ff} 
\psi=\frac{a\delta}{\delta-1}(\phi^{2e})_x.
\end{equation}
  It follows from \eqref{f2} that  \eqref{e1.7}  except $\rho_0\in L^1$ still   holds at the   time $t=T^*$.

 Next for  the relation in \eqref{ff}, we need to  consider the following equation
\begin{equation}\label{limit-3}
\phi_t+u\phi_x+(\gamma-1)\phi u_x=0,
\end{equation}
which holds in $[0,T^*)\times \mathbb{R}$ in the classical sense. Actually, if we regard  $(\phi(T^*,x), u(T^*,x), \\
\psi(T^*,x))$  and 
$$\phi_t(T^*,x)=-u(T^*, x)\phi_x(T^*, x)-(\gamma-1)\phi(T^*, x) u_x(T^*, x),$$ 
as the extended definitions of the regular solution $(\phi(t, x), u(t, x), \psi(t, x))$ at the  time $t=T^*$, then one has
$$
\text{ess}\sup_{0\leq t \leq T^*}(\|\phi(t,\cdot)\|_2+\|\phi_t(t,\cdot)\|_1)\leq C<\infty,$$
which, together with the Sobolev embedding theorem, implies that 
\begin{equation*}\label{f3}
    \phi(t, x)\in C([0,T^*];H^1).
\end{equation*}
It follows from the first line in  $\eqref{f2}$ and the consistency of weak convergence and  strong convergence in $H^1$ space that 
 \begin{equation}\label{f4}
    \phi(t_{1k},x)\to  \phi(T^*,x) \quad \text{in} \quad H^1 \quad \text{as}\quad  k\to\infty.
\end{equation} 
Notice that for any $0<R<\infty$, there exists a generic constant $C(T^*, R)$ such that 
\begin{equation}\label{f5}
\phi(t,x)\ge C(T^*, R)\quad \text{for\,\,any}\quad (t,x)\in[0,T^*]\times [-R, R],
\end{equation}
which, along with the last line in $\eqref{f2}$ and \eqref{f4}, implies that \eqref{ff} holds.

Moreover, it  follows from Lemmas \ref{l4.1}-\ref{l4.10} and \eqref{f2}-\eqref{ff} that 
\begin{equation}
   \sup_{\tau\leq t \leq T^*}( |\rho^{\frac{\delta-1}{2}} u_x(t,\cdot)|_2+|\rho^{\delta-1}u_{xx}(t,\cdot)|_2) \leq C, 
    \end{equation}
for any  $\tau\in (0, T^*]$, which implies that  $(\rho(T^*, x), u(T^*, x))$ satisfy all the   initial assumptions on the initial data of   Lemma \ref{l2}. 

Finally, if  we solve the system (\ref{e1.1}) with the initial time $T^*$, then Lemma \ref{l2} ensures that for some constant $T_0>0$, $(\rho, u)(t,x)$ is  the unique regular solution in $[T^*,T^*+T_0]\times \mathbb{R}$ to this problem, which satisfies  \eqref{er1} with $[0,T]$ replaced by $[T^*,T^*+T_0]$ except $\rho\in C([T^*,T^*+T_0];L^1)$. It follows from the boundedness of all required norms of the solution $(\rho,u)(t,x)$ in  $[0,T^*+T_0]\times \mathbb{R}$ and the standard arguments for proving the time continuity of the regular solution (see Section 3.6 (pages 138-139) of \cite{zz2} ) that, $(\rho, u)(t,x)$ is actually  the unique regular solution in $[0,T^*+T_0]\times \mathbb{R}$ to the Cauchy problem \eqref{e1.1}-\eqref{e1.3},  which satisfies  \eqref{er1} with $[0,T_*]$ replaced by $[0,T^*+T_0]$ except $\rho\in C([0,T^*+T_0];L^1)$.
Furthermore, according to  Lemma \ref{vsi}, one has  that the solution constructed above also satisfies  $\rho\in C([0,T^*+T_0];L^1)$.
Then the above conclusions contradict to the fact that $0<T^*<\infty$ is the maximal existence time of the unique regular solution $(\rho, u)(t,x)$ satisfying \eqref{er1}.

\begin{rk}
According to the  proof in this subsection, the definitions of $(\phi(T^*,x)$,\\
$u(T^*,x), \psi(T^*,x))$ do not depend on the choice of the time sequences $\{t_k\}_{k=1}^\infty$.
\end{rk}

\textbf{Step 2}: the global  well-posedness of classical solutions.  
Now we  show   that 
the regular solution  obtained  above  is indeed a classical one  in $(0, T]\times \mathbb{R}$ for any $T>0$.

First, due to   $\phi>0$ and 
$$\rho=\big(\frac{\gamma-1}{A\gamma}\phi\big)^{\frac{1}{\gamma-1}},$$
one has 
\begin{equation}\label{jichulianxu}
(\rho, \rho_t, \rho_x, u,  u_x)\in C([0,T]\times\mathbb R).
\end{equation}

Second, for the term $u_t$, according to   Lemma \ref{l4.10}, one has 
\begin{equation}\label{wtw}
\begin{split}
&t^{\frac12}u_{tx}\in L^\infty([0,T];L^2),\quad t^{\frac12}u_{tt}\in L^2([0,T];L^2),\\
& t^{\frac12}u_{txx}\in L^2([0,T];L^2), \quad t^{\frac12}u_{ttx}\in L^2([0,T];H^{-1}),
\end{split}
\end{equation}
which, along with  the classical Sobolev embedding  theorem:
\begin{equation}\label{qian}\begin{split}
&L^2([0,T];H^1)\cap W^{1,2}([0,T];H^{-1})\hookrightarrow C([0,T];L^2),
\end{split}
\end{equation}
 yields that for any $0<\tau<T$,
\begin{equation}\label{utlianxu}
tu_t\in C([0,T];H^1) \quad \text{and}\quad u_t\in C([\tau,T]\times \mathbb R).
\end{equation}

Finally, it remains to  show that
 $$ u_{xx} \in C([\tau,T]\times \mathbb{R})$$ 
 for any $0<\tau<T$. Actually, it follows from the fact $\rho>0$, \eqref{jichulianxu} and \eqref{utlianxu} that
$$u_{xx}=\frac{1}{\alpha}\rho^{1-\delta}\big(u_t+u u_x+\phi_x-\alpha\psi u_x\big)\in C([\tau,T]\times \mathbb R).$$

Until now,  we have  completed  the proof of Theorem \ref{th2}.

\subsection{Proof of Theorem \ref{th1}}
The proof of Theorem \ref{th1}  can be achieved via the  similar argument used  in Section \ref{se46}.

\section{Non-existence of global solutions with $L^\infty$ decay on $u$}

This section will be devoted to providing   the  proofs for  the non-existence theories of global regular solutions with $L^\infty$ decay on $u$ shown in Theorem \ref{th:2.20} and Corollary \ref{th:2.20-c}.

\subsection{Proof of Theorem \ref{th:2.20}}
Now we are ready to prove Theorem \ref{th:2.20}.  Let $T>0$ be any constant, and    $(\rho,u)\in D(T)$. It  follows from the definitions of  $m(t)$, $\mathbb{P}(t)$ and $E_k(t)$ that
$$
 |\mathbb{P}(t)|\leq \int \rho(t,x)|u(t,x)|\text{d}x\leq  \sqrt{2m(t)E_k(t)},
$$
which, together with the definition of the solution class $D(T)$, implies that
$$
0<\frac{|\mathbb{P}(0)|^2}{2m(0)}\leq E_k(t)\leq \frac{1}{2} m(0)|u(t,\cdot)|^2_\infty \quad \text{for} \quad t\in [0,T].
$$
Then one obtains that there exists a positive constant 
$$C_u=\frac{|\mathbb{P}(0)|}{m(0)}$$
such that
$$
|u(t,\cdot)|_\infty\geq C_u  \quad \text{for} \quad t\in [0,T].
$$
Thus one obtains     the desired conclusion as shown in Theorem \ref{th:2.20}.

\subsection{Proof of Corollary \ref{th:2.20-c}}

Let $ (\rho,u)(t,x)$ in $[0,T]\times \mathbb{R}$ be the global  regular solution obtained in Theorem \ref{th2} or \ref{th1}. Next we just need to show that $(\rho,u)\in D(T)$.

Actually,   the conservation of the total mass and the energy equality  have been shown in Lemma \ref{ms} and Appendix B, and  the conservation of the total  momentum can be verified as follows.
\begin{lem}
\label{lemmak} For any $T>0$, it holds that 
$$  \mathbb{P}(t)=\mathbb{P}(0) \quad  {\rm{for}} \quad t\in [0,T]. $$
\end{lem}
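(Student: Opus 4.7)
The plan is to adapt the cutoff argument of Lemma \ref{vsi} (where it was used for conservation of mass) to the momentum equation $\eqref{e1.1}_2$. Let $f_R(x)=f(|x|/R)$ be the same smooth cutoff introduced there, so that $|f_R'|\leq C_\ast/R$ with $\operatorname{supp}f_R'\subset\{R/2\leq|x|\leq R\}$. Since $(\rho,u)$ is a classical solution by Theorems \ref{th2}--\ref{th1} and $f_R$ has compact support, multiplying $\eqref{e1.1}_2$ by $f_R$, integrating over $\mathbb R$ and integrating by parts in $x$ gives
\begin{equation*}
\frac{\text{d}}{\text{d}t}\int \rho u\, f_R\, \text{d}x = \int\bigl(\rho u^{2}+P(\rho)\bigr)f_R'\, \text{d}x-\int \mu(\rho)u_x f_R'\, \text{d}x.
\end{equation*}
After integrating in time over $[0,t]$, it suffices to show that the left-hand side converges to $\mathbb{P}(t)-\mathbb{P}(0)$ and that each piece on the right tends to zero as $R\to\infty$.

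For the left-hand side, Cauchy--Schwarz gives $|\rho u|_{1}\leq m(t)^{1/2}(2E_k(t))^{1/2}\leq C$, so $\rho u\in L^{\infty}([0,T];L^{1})$, and the dominated convergence theorem (with dominant $|\rho u|$, using $f_R\to 1$ pointwise) yields $\int \rho u f_R\, \text{d}x\to \mathbb{P}(t)$ for each fixed $t$. For the convective and pressure fluxes on the right, Lemma \ref{l4.1} gives $\rho u^{2},\rho^{\gamma}\in L^{\infty}([0,T];L^{1})$, so
\begin{equation*}
\Bigl|\int_0^t\!\!\int\bigl(\rho u^{2}+P(\rho)\bigr)f_R'\, \text{d}x\, \text{d}s\Bigr|\leq \frac{C_\ast}{R}\int_0^t\!\!\int_{|x|\geq R/2}\bigl(\rho u^{2}+\rho^{\gamma}\bigr)\, \text{d}x\, \text{d}s\longrightarrow 0.
\end{equation*}

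The main obstacle is the viscous flux $\alpha\int \rho^{\delta}u_x f_R'\, \text{d}x$, since $\rho^{\delta}$ does not automatically lie in $L^{1}(\mathbb R)$ when $0<\delta<1$. I would control it by Cauchy--Schwarz,
\begin{equation*}
\Bigl|\int \alpha\rho^{\delta}u_x f_R'\, \text{d}x\Bigr|\leq \frac{C}{R}\Bigl(\int_{R/2\leq|x|\leq R}\!\rho^{\delta}\, \text{d}x\Bigr)^{1/2}\Bigl(\int \rho^{\delta}u_x^{2}\, \text{d}x\Bigr)^{1/2},
\end{equation*}
combined with the H\"older interpolation (with exponents $1/\delta$ and $1/(1-\delta)$)
\begin{equation*}
\int_{R/2\leq|x|\leq R}\!\rho^{\delta}\, \text{d}x\leq \bigl|\{R/2\leq|x|\leq R\}\bigr|^{1-\delta}\Bigl(\int \rho\, \text{d}x\Bigr)^{\delta}\leq C_0 R^{1-\delta},
\end{equation*}
and the dissipation bound $\int_0^T\!\!\int \rho^{\delta}u_x^{2}\, \text{d}x\, \text{d}s\leq C_0$ from Lemma \ref{l4.1}. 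A further Cauchy--Schwarz in time bounds the viscous contribution by $C T^{1/2}R^{-(1+\delta)/2}$, which vanishes as $R\to\infty$. Combining the three limits gives $\mathbb{P}(t)=\mathbb{P}(0)$ for all $t\in[0,T]$.
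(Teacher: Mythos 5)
Your argument is correct, but it is not the route the paper takes. The paper's proof of Lemma \ref{lemmak} is a two-line computation: it differentiates $\mathbb{P}(t)$ under the integral, writes $\mathbb{P}_t=-\int(\rho u^2)_x\,\text{d}x-\int P_x\,\text{d}x+\int(\mu(\rho)u_x)_x\,\text{d}x$, and concludes each term vanishes because $\rho u^2(t,\cdot)$, $\rho^{\gamma}(t,\cdot)$ and $\rho^{\delta}u_x(t,\cdot)$ lie in $W^{1,1}(\mathbb R)$, a fact supplied by the regularity class of the solution. You instead localize with a cutoff and show the three boundary fluxes vanish as $R\to\infty$ using only the conservation of mass and the basic energy bounds of Lemma \ref{l4.1}; the only nontrivial point, the viscous flux when $0<\delta<1$ (where $\rho^{\delta}$ need not be integrable), is handled correctly by your Cauchy--Schwarz plus the interpolation $\int_{A}\rho^{\delta}\,\text{d}x\le |A|^{1-\delta}m(0)^{\delta}$ and the dissipation bound, yielding the rate $R^{-(1+\delta)/2}$. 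This buys robustness: you never need to verify $\rho^{\delta}u_x\in W^{1,1}$, which the paper asserts without proof and which requires a further decomposition such as $\rho^{\delta}u_x=\rho^{(\delta+1)/2}\cdot\rho^{(\delta-1)/2}u_x$; the price is a longer argument. One small inaccuracy: the cutoff $f$ of Lemma \ref{vsi} is \emph{not} compactly supported (it equals $e^{-s}$ for $s\ge 1$), so $f_R'$ is supported in $\{|x|\ge R/2\}$ rather than in the annulus $\{R/2\le |x|\le R\}$. This is harmless --- either replace $f$ by a genuine compactly supported cutoff with $|f_R'|\le C/R$, or keep the paper's $f$, use $|f_R'|\le C_*f_R/R$ together with $\int_{|x|\ge R/2}f_R\,\text{d}x\le CR$, and run the same H\"older estimate --- but as written the measure $|\{R/2\le|x|\le R\}|$ does not capture the full support of $f_R'$.
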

\begin{proof}
First, one can obtain that 
\begin{equation}\label{finite}\begin{split}
\mathbb{P}(t)=\int  \rho u\text{d}x \leq C|\rho|_1|u|_\infty<&\infty.
\end{split}
\end{equation}

Second, the momentum equation $\eqref{e1.1}_2$ implies  that
\begin{equation}\label{deng1}
\mathbb{P}_t=-\int (\rho u^2 )_x\text{d}x-\int P_x\text{d}x+\int (\mu(\rho)u_x)_x\text{d}x=0,
\end{equation}
where one has used the fact that
$$
\rho u^2(t,\cdot),\quad \rho^\gamma(t,\cdot) \quad \text{and} \quad \rho^\delta u_x(t,\cdot) \in W^{1,1}(\mathbb{R}).
$$
\end{proof}


According to Theorem \ref{th:2.20} and Lemma \ref{lemmak}, the proof of  Corollary \ref{th:2.20-c} is complete.

\appendix

\section{ Basic lemmas}

In this appendix, we list some useful lemmas which were used frequently in the previous sections.
The first one is the  well-known Gagliardo-Nirenberg inequality.
\begin{lem}[\cite{ln}]\label{al1}
Let  $1\le q, \ r\le \infty$, and $p$ satisfies $$\frac{1}{p}=\frac{j}{d}+\zeta\Big(\frac{1}{r}-\frac{m}{d}\Big)+(1-\zeta)\frac{1}{q},\quad \text{and}\quad \frac{j}{m}\le \zeta\le 1.$$ 
Then there exists a generic  constant $C>0$ depends only on $j,m,d,q,r$ and $\zeta$ such that for $f\in L^q\cap D^{m,r}(\mathbb R^d)$,
\begin{equation}\label{ae1}
    |D^j f|_p\leq C|D^m f|_r^\zeta |f|_q^{1-\zeta},
\end{equation}
with the exceptions that if $j=0, rm<d, q=\infty$ we assume that $f$ vanishes at infinity or $f\in L^{\tilde q}$ for some finite $\tilde q>0$, while if $1<r<\infty$ and $m-j-d/r$ is a non-negative integer we take $j/m\le \zeta<1$.

In particular, letting $d=1$, $j=0$ and  $p=\infty$ in \eqref{ae1}, one has 
\begin{equation*}
    |f|_\infty\leq C|f|_2^{\frac12}|f_x|_2^{\frac12}.
\end{equation*}
\end{lem}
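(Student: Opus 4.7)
The plan is to follow Nirenberg's classical argument, since this is a standard interpolation inequality. First, a scaling check confirms the statement is dimensionally consistent: for the dilation $f_\lambda(x) = f(\lambda x)$ one has $|D^j f_\lambda|_p = \lambda^{j - d/p} |D^j f|_p$ and similarly for $D^m f$, $f$ in their respective norms, so scale invariance of the proposed inequality forces exactly the relation $\tfrac{1}{p} = \tfrac{j}{d} + \zeta(\tfrac{1}{r} - \tfrac{m}{d}) + (1-\zeta)\tfrac{1}{q}$.

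Second, I would reduce to the extremal endpoint $\zeta = j/m$. Granting the endpoint inequality $|D^j f|_{p_*} \leq C |D^m f|_r^{j/m} |f|_q^{1 - j/m}$ (with $p_*$ given by the scaling relation at $\zeta = j/m$), one recovers the intermediate range $j/m < \zeta \leq 1$ by H\"older interpolation between $L^{p_*}$ and $L^q$ applied to $D^j f$, using that the exponent relation is linear in $\zeta$.

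Third, for the endpoint case itself, I would proceed by induction on $j$ with the base case $j=1$, $m=2$ handled by an integration-by-parts identity: write
\begin{equation*}
\int |f'|^{p_*} \mathrm{d}x = -\int f \, \bigl(|f'|^{p_*-2} f'\bigr)' \mathrm{d}x,
\end{equation*}
then apply H\"older's inequality together with the product-rule bound $|(|f'|^{p_*-2}f')'| \lesssim |f'|^{p_*-2}|f''|$ to obtain the desired estimate after choosing $p_*$ according to the scaling relation. Higher orders follow by iterating this base case. For $d > 1$, one applies the one-dimensional inequality along each coordinate slice and integrates, or alternatively combines the $d=1$ case with a Sobolev embedding.

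The main delicate point is the excluded borderline case where $1<r<\infty$ and $m - j - d/r$ is a non-negative integer: here the naive interpolation endpoint degenerates (the relevant embedding lands in $L^\infty$ rather than an $L^{p_*}$ with $p_* < \infty$, and logarithmic losses can appear), which is precisely why the lemma restricts $\zeta < 1$ in that range. Finally, the one-dimensional corollary $|f|_\infty \leq C|f|_2^{1/2}|f_x|_2^{1/2}$ is a direct consequence of the identity $f^2(x) = 2\int_{-\infty}^{x} f f_y \, \mathrm{d}y$ combined with Cauchy--Schwarz, which yields $|f|_\infty^2 \leq 2|f|_2 |f_x|_2$.
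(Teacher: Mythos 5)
The paper does not actually prove this lemma: it is the classical Gagliardo--Nirenberg interpolation inequality, listed in Appendix A as a ``basic lemma'' and attributed directly to Nirenberg \cite{ln}, so there is no internal proof to compare your attempt against. Your outline follows the standard Nirenberg argument, and most of its pieces are sound: the scaling computation correctly identifies the exponent relation, the integration-by-parts identity for the base case $j=1$, $m=2$ is the right device, and the derivation of the one-dimensional corollary from $f^2(x)=2\int_{-\infty}^{x}ff_y\,\mathrm{d}y$ and Cauchy--Schwarz is complete and correct (this corollary is, in fact, the only part of the lemma the paper ever uses).

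One step of your reduction is stated incorrectly, however. To pass from the endpoint $\zeta=j/m$ to the intermediate range $j/m<\zeta<1$, the H\"older interpolation must be taken between the \emph{two} endpoint exponents $\zeta=j/m$ and $\zeta=1$; the latter is the Sobolev-type embedding $|D^jf|_{p_1}\le C|D^mf|_r$ with $\tfrac{1}{p_1}=\tfrac{j}{d}+\tfrac{1}{r}-\tfrac{m}{d}$, which you have not established and which does not follow from the $\zeta=j/m$ case. Interpolating ``between $L^{p_*}$ and $L^q$ applied to $D^jf$,'' as you write, is not meaningful for $j\ge 1$: only $f$, not $D^jf$, is assumed to lie in $L^q$, and the would-be $\zeta=0$ estimate $|D^jf|_p\le C|f|_q$ is false in general --- which is precisely why the lemma restricts to $\zeta\ge j/m$. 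So your sketch needs the $\zeta=1$ embedding supplied as a second ingredient before the interpolation closes. Since the statement is a cited classical result, this is a gap in the exposition of a known proof rather than a flaw affecting the paper.
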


The second  one is  Fatou's lemma.
\begin{lem}\label{Fatou}
Given a measure space $(V,\mathcal{F},\nu)$ and a set $X\in \mathcal{F}$, let  $\{f_n\}$ be a sequence of $(\mathcal{F} , \mathcal{B}_{\mathbb{R}_{\geq 0}} )$-measurable non-negative functions $f_n: X\rightarrow [0,\infty]$. Define the function $f: X\rightarrow [0,\infty]$ by setting
$$
f(x)= \liminf_{n\rightarrow \infty} f_n(x),
$$
for every $x\in X$. Then $f$ is $(\mathcal{F},  \mathcal{B}_{\mathbb{R}_{\geq 0}})$-measurable, and
$$
\int_X f(x) \text{\rm d}\nu \leq \liminf_{n\rightarrow \infty} \int_X f_n(x) \text{\rm d}\nu.
$$
\end{lem}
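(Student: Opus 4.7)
The plan is to deduce Fatou's lemma from the Monotone Convergence Theorem (MCT) via the standard truncation-from-below construction. First I would define the auxiliary sequence
\begin{equation*}
g_n(x) := \inf_{k \geq n} f_k(x), \qquad x \in X, \ n \in \mathbb{N}.
\end{equation*}
Since each $f_k$ is $(\mathcal{F}, \mathcal{B}_{\mathbb{R}_{\geq 0}})$-measurable, the pointwise infimum over a countable family $g_n$ inherits measurability, and clearly $0 \leq g_n \leq f_n$ on $X$. By definition of the liminf, the sequence $\{g_n\}$ is monotone nondecreasing in $n$ and satisfies
\begin{equation*}
\lim_{n \to \infty} g_n(x) = \liminf_{n \to \infty} f_n(x) = f(x) \quad \text{for every } x \in X,
\end{equation*}
which in particular gives measurability of $f$ as the pointwise limit of measurable functions.

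Next I would invoke the Monotone Convergence Theorem applied to the nonnegative, monotone increasing sequence $\{g_n\}$: this yields
\begin{equation*}
\lim_{n \to \infty} \int_X g_n(x) \,\mathrm{d}\nu = \int_X f(x) \,\mathrm{d}\nu.
\end{equation*}
From the pointwise inequality $g_n \leq f_n$, monotonicity of the integral gives $\int_X g_n \,\mathrm{d}\nu \leq \int_X f_n \,\mathrm{d}\nu$ for every $n$. Taking the liminf as $n \to \infty$ of both sides and using that the left-hand sequence actually converges to $\int_X f \,\mathrm{d}\nu$, I obtain
\begin{equation*}
\int_X f(x) \,\mathrm{d}\nu = \liminf_{n \to \infty} \int_X g_n(x) \,\mathrm{d}\nu \leq \liminf_{n \to \infty} \int_X f_n(x) \,\mathrm{d}\nu,
\end{equation*}
which is the desired conclusion.

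Since this is a classical result, there is no genuine obstacle; the only point that deserves care is the justification of the MCT step, namely verifying that $g_n$ is indeed nondecreasing and that $g_n \uparrow f$ pointwise. Both follow directly from the order-theoretic definition of $\liminf$ as $\sup_n \inf_{k\geq n} f_k$, so no further measure-theoretic machinery (beyond MCT itself) is required. If one preferred a self-contained argument, MCT can in turn be reduced to the definition of the Lebesgue integral via simple functions, but I would simply cite it here as a standard tool.
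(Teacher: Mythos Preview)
Your proof is the standard and correct derivation of Fatou's lemma from the Monotone Convergence Theorem; there is no gap. The paper, however, does not actually prove this lemma: it is listed in Appendix A among basic background results and is simply stated without proof as a classical fact, so there is no ``paper's own proof'' to compare against.
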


The following lemma is used to obtain the time-weighted estimates of the velocity $u$.
\begin{lem}\cite{bjr}\label{bjr}
If $f(t,x)\in L^2([0,T]; L^2)$, then there exists a sequence $s_k$ such that
$$
s_k\rightarrow 0 \quad \text{and}\quad s_k |f(s_k,\cdot)|^2_2\rightarrow 0 \quad \text{as} \quad k\rightarrow\infty.
$$
\end{lem}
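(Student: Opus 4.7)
The plan is to reduce the statement to a standard fact about nonnegative integrable functions of one real variable. Setting $g(s) := |f(s,\cdot)|_2^2$, the definition of $L^2([0,T];L^2)$ together with Fubini's theorem gives $g \in L^1([0,T])$ with $g \geq 0$. The lemma then follows from the following assertion: for any nonnegative $g \in L^1([0,T])$, there exists a sequence $s_k \to 0^+$ such that $s_k g(s_k) \to 0$.

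To prove this assertion, I would argue by contradiction. For each integer $n \geq 1$, set
$$A_n := \bigl\{ s \in (0, 1/n) : s\,g(s) > 1/n \bigr\}.$$
If, for some $n_0$, the set $(0, 1/n_0) \setminus A_{n_0}$ had Lebesgue measure zero, then $g(s) > 1/(n_0 s)$ for a.e. $s \in (0, 1/n_0)$, forcing
$$\int_0^{1/n_0} g(s)\,{\rm d}s \;\geq\; \frac{1}{n_0}\int_0^{1/n_0} \frac{{\rm d}s}{s} \;=\; +\infty,$$
which contradicts $g \in L^1$. Hence, for every $n \geq 1$, the set $(0, 1/n) \setminus A_n$ has positive Lebesgue measure, so I can pick $s_n \in (0, 1/n)$ at which $g$ is defined and $s_n g(s_n) \leq 1/n$. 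The sequence $\{s_n\}$ then satisfies $s_n \to 0$ and $s_n g(s_n) \to 0$, as required.

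The argument is elementary and there is no real obstacle. The only point requiring a small amount of care is the fact that $g$ is only defined almost everywhere; this is the reason I phrase the contradiction step in terms of the Lebesgue measure of the sets $A_n$ rather than in terms of a pointwise liminf of $s\,g(s)$ as $s \to 0^+$. With this framing, the points $s_n$ can always be chosen from the full-measure set on which $s \mapsto |f(s,\cdot)|_2^2$ actually takes a well-defined finite value, so the extracted sequence is a legitimate one.
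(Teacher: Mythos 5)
Your proof is correct and complete: the reduction to the one-variable statement for $g(s)=|f(s,\cdot)|_2^2\in L^1([0,T])$ is immediate, and the contradiction via non-integrability of $1/s$ near the origin, phrased in terms of the measure of the sets $A_n$, is exactly the standard argument, with the a.e.-definedness of $g$ handled properly. The paper itself gives no proof of this lemma (it is simply cited from the reference), so there is nothing to compare against; your argument is a legitimate self-contained justification.
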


The last one shows some compactness results obtained via the Aubin-Lions Lemma.
\begin{lem}[\cite{js}]
 Let $X_0\subset X\subset X_1$ be three Banach spaces.  Suppose that $X_0$ is compactly embedded in $X$ and $X$ is continuously embedded in $X_1$. Then the following statements hold
\begin{enumerate}
\item If $J$ is bounded in $L^p([0,T];X_0)$ for $1\leq p < \infty$, and $\frac{\partial J}{\partial t}$ is bounded in $L^1([0,T];X_1)$, then $J$ is relatively compact in $L^p([0,T];X)$.\\

\item If $J$ is bounded in $L^\infty([0,T];X_0)$  and $\frac{\partial J}{\partial t}$ is bounded in $L^p([0,T];X_1)$ for $p>1$, then $J$ is relatively compact in $C([0,T];X)$.
\end{enumerate}
\end{lem}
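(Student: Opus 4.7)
The plan is to establish both statements by combining an Ehrling-type interpolation inequality (derived from the compact embedding $X_0 \hookrightarrow X$) with appropriate compactness criteria: the Fr\'echet--Kolmogorov theorem for part (1), and the Arzel\`a--Ascoli theorem for part (2). First, I would prove the following preliminary: for every $\varepsilon>0$ there exists $C_\varepsilon>0$ such that
\begin{equation*}
\|u\|_X \le \varepsilon \|u\|_{X_0} + C_\varepsilon \|u\|_{X_1} \qquad \text{for all } u\in X_0.
\end{equation*}
This is shown by contradiction: if it failed, one could find $u_n\in X_0$ with $\|u_n\|_X=1$ and $\|u_n\|_{X_0}\le 1/\varepsilon$ but $\|u_n\|_{X_1}\to 0$; the compact embedding $X_0\hookrightarrow X$ yields a subsequence convergent in $X$ to some $u_*$, while the continuous embedding $X\hookrightarrow X_1$ forces $u_*=0$, contradicting $\|u_n\|_X=1$.

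For part (1), I would apply the Fr\'echet--Kolmogorov characterization of relative compactness in $L^p([0,T];X)$. Boundedness of $J$ in $L^p([0,T];X)$ is immediate from $X_0\hookrightarrow X$. The nontrivial condition is the equi-continuity of time translations: for $0<h<T$, the interpolation inequality gives
\begin{equation*}
\|J(\cdot+h)-J(\cdot)\|_{L^p([0,T-h];X)}^p \le C\varepsilon^p \|J\|_{L^p(X_0)}^p + C_\varepsilon^p \|J(\cdot+h)-J(\cdot)\|_{L^p([0,T-h];X_1)}^p.
\end{equation*}
The first term is $O(\varepsilon^p)$ uniformly in $J$, while the second is controlled using $J(t+h)-J(t)=\int_t^{t+h}\partial_s J\,\mathrm{d}s$ together with the $L^1([0,T];X_1)$ bound on $\partial_t J$, producing a factor that vanishes as $h\to 0$ (after a standard absolute-continuity argument). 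Choosing $\varepsilon$ small and then $h$ small completes the verification.

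For part (2), I would use Arzel\`a--Ascoli in $C([0,T];X)$. The hypothesis $p>1$ combined with $\partial_t J\in L^p([0,T];X_1)$ yields H\"older continuity of $J$ in $X_1$ via
\begin{equation*}
\|J(t+h)-J(t)\|_{X_1}\le h^{1-1/p}\|\partial_t J\|_{L^p(X_1)},
\end{equation*}
and the interpolation inequality upgrades this to equi-continuity of $J$ in $X$ using the $L^\infty([0,T];X_0)$ bound. Pointwise relative compactness of $\{J(t)\}$ in $X$ at each $t$ follows from the $L^\infty$ bound in $X_0$ and the compact embedding $X_0\hookrightarrow X$; this gives relative compactness in $C([0,T];X)$.

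The main obstacle is the equi-continuity estimate: one must transfer smallness from $X_1$-differences (which is what the time-derivative bound controls) to $X$-differences (which is what compactness in $C([0,T];X)$ or $L^p([0,T];X)$ requires), and the only bridge is the Ehrling-type inequality with its $\varepsilon$--$C_\varepsilon$ trade-off. Orchestrating this trade-off so that the bound is \emph{uniform over all members of the family} $J$ and vanishes as $h\to 0$ is the delicate point; in part (1) one must in addition handle the fact that $\partial_t J\in L^1$ only gives equi-absolute-continuity of translations rather than a pointwise modulus, which is reconciled via a density/approximation argument for elements of $L^1([0,T];X_1)$.
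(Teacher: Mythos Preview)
The paper does not prove this lemma at all: it is simply quoted from Simon \cite{js} as a standard tool in Appendix~A, with no argument given. So there is no ``paper's proof'' to compare against; your sketch is an independent attempt at a result the authors take for granted.

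Your outline is the standard one (Ehrling's inequality plus a compactness criterion in the time variable), and for part~(2) it is correct as written. For part~(1), however, there is a genuine gap. You invoke ``the Fr\'echet--Kolmogorov characterization of relative compactness in $L^p([0,T];X)$'' and then check only boundedness and equi-continuity of time translations. When $X$ is infinite-dimensional, those two conditions are \emph{not} sufficient: the Riesz--Fr\'echet--Kolmogorov theorem as usually stated applies to scalar- (or finite-dimensional-) valued functions. Simon's own characterization (Theorem~1 in \cite{js}) requires in addition that the sets
\[
\Big\{\int_{t_1}^{t_2} f(s)\,\mathrm{d}s : f\in J\Big\}
\]
be relatively compact in $X$ for every $0<t_1<t_2<T$. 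This extra condition does follow easily here---the integrals are bounded in $X_0$ by the $L^p([0,T];X_0)$ hypothesis, and $X_0\hookrightarrow X$ is compact---but you must state and verify it; otherwise the argument is incomplete. Once you add this step, your proof of part~(1) goes through.
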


\section{  Proofs of  Lemmas \ref{l4.1}-\ref{l4.2} and the  equation \eqref{e-1.16}}

In this appendix, for the sake of completeness, we will give the  proof of  Lemmas \ref{l4.1}-\ref{l4.2} and the derivation of  the equation \eqref{e-1.16}. 

{\textbf{Proof of  Lemma \ref{l4.1}}}.\ 
It follows from  $\eqref{e1.1}_1$ that 
\begin{equation}\label{Peqn}
    \frac{P(\rho)_t}{\gamma-1} +\frac{(P(\rho) u)_x}{\gamma-1}+P(\rho) u_x=0.
\end{equation}
Then multiplying $\eqref{e1.1}_2$  by $u$ and adding the resulting equation to \eqref{Peqn}, and  integrating over $\mathbb{R}$, one arrives at
\begin{equation}\label{pin}
\frac{\text{d}}{\text{d}t} \int \left(\frac12 \rho u^2+\frac{P(\rho)}{\gamma-1}\right)\text{d}x + \alpha\int \rho^{\delta} u_x^2 \text{d}x=0,
\end{equation}
where one has used the fact that
$$
\rho u u^2(t,\cdot),\quad \rho^\gamma u(t,\cdot) \quad \text{and} \quad \rho^\delta u_x u(t,\cdot) \in W^{1,1}(\mathbb{R}).
$$

Finally, integrating \eqref{pin} over $[0,t]$, one can get the desired estimate. 

The proof of Lemma \ref{l4.1} is complete.

\par\bigskip

{\textbf{Proof of Lemma \ref{l4.2}}}.\ 
According to $\eqref{e1.1}_1$ and  $\eqref{e1.1}_2$, one has
\begin{align}
   &\varphi(\rho)_{tx}+(\varphi(\rho)_x u)_x+(\rho\varphi'(\rho) u_x)_x=0,\label{e5.2}\\
   & \rho(u_t+uu_x)+P(\rho)_x=(\mu(\rho) u_{x})_x.\label{e5.1}
\end{align}
Multiplying \eqref{e5.2} by $\rho$, one has 
\begin{equation}\label{es}
\rho \varphi(\rho)_{tx}+\rho u\varphi(\rho)_{xx}+(\rho^2\varphi'(\rho) u_x)_x=0.
\end{equation}
Then adding \eqref{es}  to  \eqref{e5.1}, by the definition of $v$,  one can obtain 
\begin{equation}\label{e5.3}
  \rho(v_t+u v_x)+P(\rho)_x-\left((\mu(\rho)-\rho^2\varphi'(\rho))u_{x}\right)_x=0,
\end{equation}
which, along with 
$$\varphi'(\rho)=\mu(\rho)/\rho^2$$
in \eqref{e5.3}, yields that 
 \begin{equation}\label{ess}
  \rho(v_t+u v_x)+P(\rho)_x=0.
\end{equation}
Multiplying \eqref{ess} by $v$ and integrating over $\mathbb{R}$, one has 
\begin{equation}\label{eso}
   \frac{\text{d}}{\text{d}t}\int\left(\frac12 \rho v^2+\frac{A}{\gamma-1}\rho^\gamma\right) {\rm{d}}x+\int P(\rho)_x \varphi(\rho)_x {\rm{d}}x=0.
\end{equation}
Integrating \eqref{eso} over $[0,T]$,  one can obtain the  desired conclusion.

The proof of Lemma \ref{l4.2} is complete. 
\par\bigskip

{\textbf{Proof of  \eqref{e-1.16}}}.\ 
It follows from \eqref{ess} that 
\begin{equation}
\begin{split}
v_t+u v_x=&-\frac{P(\rho)_x}{\rho}=-A\gamma\rho^{\gamma-2}\rho_x\\
=&-A\gamma\rho^{\gamma-\delta}\rho^{\delta-2}\rho_x=-\frac{A\gamma}{\alpha}\rho^{\gamma-\delta}(v-u),
\end{split}
\end{equation}
which implies  \eqref{e-1.16}.

%

\section{  Local-in-time well-posedness for multi-dimensional case}

In this appendix, for the sake of completeness, we will recall the   local-in-time well-posedness theory (obtained in \cite{sz3,zz2})  of regular solutions with far field vacuum  for the corresponding 3-D and 2-D Cauchy problems of the degenerate viscous flows.  
\subsection{$0<\delta<1$}
For the case $0<\delta<1$, we  first give the definition of regular solutions with far field vacuum  to the 3-D Cauchy problem \eqref{eq:1.1}-\eqref{10000} with the following initial data and far field behavior:
 \begin{equation}\label{ccc}
\begin{aligned}
(\rho(0,x),u(0,x))=(\rho_0(x),u_0(x)) \quad  & \text{for}\quad  x\in\mathbb R^3,\\[4pt]
\displaystyle
\left(\rho(t,x),u(t,x)\right)\to \left(0,0\right)\quad  \text{as}\quad  \left|x\right|\to \infty\quad & \text{for}\quad   t\ge 0.
\end{aligned}
\end{equation}

\begin{mydef}[\cite{zz2}]
Assume $0<\delta<1$ and  $T> 0$.  The pair $(\rho,u)$ is called a regular solution to the Cauchy problem  \eqref{eq:1.1}-\eqref{10000} with  \eqref{ccc}  in $[0,T]\times \mathbb R^3$, if $(\rho, u)$ satisfies this problem in the sense of distributions and:
\begin{equation*}
\begin{aligned}
&(1)\ \rho>0,\quad \rho^{\gamma-1}\in C([0,T];H^3),\quad  \nabla \rho^{\delta-1}\in L^\infty([0,T];L^\infty \cap D^2);\notag\\[1pt]
&(2)\ u\in C([0,T];H^3)\cap L^2([0,T]; H^4), \quad u_t\in C([0,T];H^1)\cap L^2([0,T]; D^2),\notag\\[1pt]
& \ \ \quad \rho^{\frac{\delta-1}{2}}\nabla u\in C([0,T]; L^2),\quad \rho^{\frac{\delta-1}{2}}\nabla u_{t}\in L^\infty([0,T];L^2),\notag\\[1pt]
& \ \ \quad \rho^{\delta-1} \nabla u \in L^\infty([0,T];D^1_*),\quad \rho^{\delta-1}\nabla^2 u\in C([0,T];H^1)\cap L^2([0,T];D^2).
\end{aligned}
\end{equation*}
\end{mydef}

 The corresponding local-in-time well-posedness can be stated as follows.

\begin{lem}[\cite{zz2}]\label{apd1}
Let  parameters   $\gamma$, $\delta$, $\alpha$ and  $\beta$ satisfy
\begin{equation*}
\gamma>1,\quad 0<\delta<1, \quad \alpha>0, \quad 2\alpha+3\beta\geq 0.
\end{equation*}
  Assume the initial data $( \rho_0, u_0)$ satisfy
\begin{equation*}
\rho_0>0,\quad (\rho^{\gamma-1}_0, u_0)\in H^3, \quad \nabla \rho^{\delta-1}_0\in D^1_*\cap D^2,   \quad \nabla \rho^{\frac{\delta-1}{2}}_0\in L^4,
\end{equation*}
and the initial  compatibility conditions
\begin{equation*}
  \nabla u_0=\rho^{\frac{1-\delta}{2}}_0 g_1,\quad \   \ \  Lu_0= \rho^{1-\delta}_0g_2,\quad \  \ \
 \nabla \left(\rho^{\delta-1}_0Lu_0\right)=\rho^{\frac{1-\delta}{2}}_0g_3,
\end{equation*}
for some $g_i(i=1,2,3)\in L^2$, then there exist a  time $T_*>0$ and a unique regular solution $(\rho, u)$  in $ [0,T_*]\times \mathbb{R}^3$ to the Cauchy problem \eqref{eq:1.1}-\eqref{10000} with \eqref{ccc} satisfying
\begin{equation*}
\begin{split}
& \rho^{\frac{\delta-1}{2}}\nabla u_t \in  L^\infty([0,T_*];L^2),\quad  \rho^{\delta-1}\nabla^2 u_t \in L^2([0,T_*]; L^2),\\
& t^{\frac{1}{2}}u\in L^\infty([0,T_*];D^4),\quad  t^{\frac{1}{2}}u_t\in L^\infty([0,T_*];D^2)\cap L^2([0,T_*] ; D^3),\\
& u_{tt}\in L^2([0,T_*];L^2),\quad   t^{\frac{1}{2}}u_{tt}\in L^\infty([0,T_*];L^2)\cap L^2([0,T_*];D^1_*),\\
& \rho^{1-\delta}\in L^\infty([0,T_*];  L^\infty\cap D^{1,6} \cap D^{2,3} \cap D^3), \\
&  \nabla \rho^{\delta-1}\in C([0,T_*]; D^1_*\cap D^2), \quad  \nabla \ln \rho \in L^\infty([0,T_*];L^\infty\cap L^6\cap  D^{1,3} \cap D^2).
\end{split}
\end{equation*}
\end{lem}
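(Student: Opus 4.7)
The plan is the reformulation-approximation strategy. First I would rewrite the system in the new variables $\phi=\frac{A\gamma}{\gamma-1}\rho^{\gamma-1}$ and $\psi=\frac{\delta}{\delta-1}\nabla\rho^{\delta-1}$, producing the system \eqref{rfp}. The key gain is that the degeneracies of the evolution operator $\rho\partial_t$ and of the viscous term on vacuum are now repackaged into (i) the factor $\phi^{2e}$ (with $e<0$) multiplying $Lu$, which diverges as $\rho\to 0$, and (ii) the source $\psi\cdot Q(u)$, which is a priori singular at the far-field vacuum. These two quantities are coupled through the $\psi$-equation $\psi_t+\nabla(u\cdot\psi)+(\delta-1)\psi\,\text{div}\,u+a\delta\phi^{2e}\nabla\text{div}\,u=0$, so the strategy is to treat $\phi^{2e}Lu$ as a single effective viscous force and to estimate it together with $\psi$.

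Next I would remove the vacuum at the initial time by setting $\phi_0^\eta=\phi_0+\eta$ and $\psi_0^\eta=\frac{a\delta}{\delta-1}\nabla(\phi_0+\eta)^{2e}$ for $\eta\in(0,1)$, together with the perturbed compatibility data $g_i^\eta$ from \eqref{leitong}. Since $\phi_0^\eta\ge\eta>0$, the reformulated system is strictly parabolic in $u$ and strictly hyperbolic in $(\phi,\psi)$, and one can solve it by a standard linearization: at each step freeze $u^n$ in the transport equations for $(\phi^{n+1},\psi^{n+1})$ and freeze $(\phi^{n+1},\psi^{n+1})$ in the parabolic equation for $u^{n+1}$. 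Contraction in a low-order norm combined with uniform high-order bounds produces a unique strong solution $(\phi^\eta,u^\eta,\psi^\eta)$ on some interval $[0,T^\eta]$. The assumption $\nabla\rho_0^{(\delta-1)/2}\in L^4$ is needed exactly at this stage, through the identity \eqref{zp}, to keep $|\psi_0^\eta|_{D^1_*}$ bounded as $\eta\to 0$.

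The core of the argument, and the main obstacle, is to prove $T^\eta\ge T_*>0$ for some $T_*$ independent of $\eta$ by deriving uniform a priori estimates. I would test the $u$-equation against $u_t$ and $\Delta u_t$ to produce the weighted quantities $\rho^{(\delta-1)/2}\nabla u$, $\rho^{\delta-1}\nabla^2 u$ and their time derivatives; the compatibility conditions $Lu_0=\rho_0^{1-\delta}g_2$ and $\nabla(\rho_0^{\delta-1}Lu_0)=\rho_0^{(1-\delta)/2}g_3$ are exactly what is needed to initialize $\sqrt{\rho^{1-\delta}}\nabla u_t|_{t=0}$ and $\nabla(\rho^{\delta-1}Lu)|_{t=0}$, which cannot be extracted from $u_0\in H^3$ alone because of the vanishing weights. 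The $\psi$-estimates in $L^\infty\cap D^1_*\cap D^2$ are closed by rewriting $\phi^{2e}\nabla\text{div}\,u$ in the $\psi$-equation in terms of $\phi^{2e}Lu$ modulo controllable commutators and invoking the $u$-estimates via the elliptic approach on $L(\rho^{\delta-1}u)$; conversely the singular source $\psi\cdot Q(u)$ in the $u$-equation is absorbed using the bounds on $\psi$. Once these uniform-in-$\eta$ estimates are in hand, weak-$*$ compactness and the Aubin-Lions lemma yield a limit $(\rho,u)$ in the stated regularity class, strict positivity of $\rho$ is inherited from that of $\phi$ along the characteristics of the continuity equation, and uniqueness follows from a standard energy estimate on the difference of two solutions in a low-order norm.
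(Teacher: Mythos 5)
Your outline matches the strategy of the cited work \cite{zz2}, which the paper itself does not reprove but merely quotes (its Section 2.1.2 only recalls the approximation $\phi_0^\eta=\phi_0+\eta$, the perturbed compatibility data \eqref{leitong}, and the fact that $\nabla\rho_0^{\frac{\delta-1}{2}}\in L^4$ serves solely to keep $|\psi_0^\eta|_{D^1_*}$ uniformly bounded via \eqref{zp}) --- all of which you reproduce correctly. The only slip is cosmetic: the quantity initialized by the third compatibility condition is $\rho^{\frac{\delta-1}{2}}\nabla u_t$ (singular weight, since $\delta<1$), not $\rho^{\frac{1-\delta}{2}}\nabla u_t$; otherwise the proposal is essentially the same approach.
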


\subsection{$\delta=1$}
For the case $\delta=1$, we  first give the definition of regular solutions with far field vacuum  to the 2-D Cauchy problem of \eqref{eq:1.1}-\eqref{10000} with the following initial data and far field behavior:
\begin{equation}\label{cccc}
\begin{aligned}
(\rho(0,x),u(0,x))=(\rho_0(x),u_0(x)) \quad  & \text{for}\quad  x\in\mathbb R^2,\\[4pt]
\displaystyle
\left(\rho(t,x),u(t,x)\right)\to \left(0,0\right)\quad  \text{as}\quad  \left|x\right|\to \infty\quad & \text{for}\quad   t\ge 0.
\end{aligned}
\end{equation} 

\begin{mydef}[\cite{sz3}]
Assume $\delta=1$  and $T> 0$. The pair $(\rho,u)$ is called a regular solution to the Cauchy problem  \eqref{eq:1.1}-\eqref{10000} with \eqref{cccc} in $[0,T]\times \mathbb R^2$, if $(\rho, u)$ satisfies this problem in the sense of distributions and:
\begin{equation*}
\begin{aligned}
&(1)\ 0< \rho\in C^1([0,T]\times \mathbb R^2),\  \rho^{\frac{\gamma-1}{2}}\in C([0,T];H^3),\   \nabla \ln\rho\in C([0,T];L^6\cap D^1\cap D^2); \notag\\[1pt]
&(2)\ u\in C([0,T];H^3)\cap L^2([0,T]; H^4),\quad u_t\in C([0,T];H^1)\cap L^2([0,T]; D^2);\notag\\[1pt]
&(3)\  \lim_{|x|\to \infty}\left(u_t+ u\cdot\nabla u+Lu \right)=\lim_{|x|\to\infty} \left(\nabla\ln\rho\cdot Q(u)\right) \quad {\rm{for}}\quad  t>0.
\end{aligned}
\end{equation*}
\end{mydef}

The corresponding local-in-time well-posedness  can be stated  as follows.

\begin{lem}[\cite{sz3}]\label{apd2}
Let  parameters  $\gamma$, $\delta$, $\alpha$ and  $\beta$ satisfy
\begin{equation*}
\gamma>1,\quad \delta=1, \quad \alpha>0, \quad \alpha+\beta\geq 0.
\end{equation*}
Assume the initial data $( \rho_0, u_0)$ satisfy 
\begin{equation*}
\rho_0> 0,\quad (\rho^{\frac{\gamma-1}{2}}_0, u_0)\in H^3, \quad \nabla \ln\rho_0\in L^6 \cap  D^1\cap D^2,
\end{equation*}
 then there exist a time $T_*>0$ and a unique regular solution $(\rho, u)$   in $ [0,T_*]\times \mathbb{R}^2$  to the Cauchy problem \eqref{eq:1.1}-\eqref{10000} with \eqref{cccc} satisfying
\begin{equation*}
\begin{split}
& (\rho^{\frac{\gamma-1}{2}})_t \in C([0,T_*];H^2),\quad (\nabla \ln\rho)_t \in C([0,T_*]; H^1),\\
& u_{tt}\in L^2([0,T_*];L^2),\quad  t^{\frac{1}{2}}u\in L^\infty([0,T_*];D^4),\\
&t^{\frac{1}{2}}u_t\in L^\infty([0,T_*];D^2)\cap L^2([0,T_*] ; D^3),\ \  t^{\frac{1}{2}}u_{tt}\in L^\infty([0,T_*];L^2)\cap L^2([0,T_*];D^1).
\end{split}
\end{equation*}
\end{lem}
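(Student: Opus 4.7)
The plan is to work with the reformulated problem \eqref{ref2} in the new unknowns $(\tilde\phi,u,\psi)=(\rho^{(\gamma-1)/2},u,\nabla\ln\rho)$. When $\delta=1$ the coefficient $\rho^{\delta-1}$ in front of $Lu$ reduces to $1$, so the momentum equation becomes a \emph{uniformly} parabolic equation for $u$; the price is the possibly singular source $\psi\cdot Q(u)$, controlled via a symmetric hyperbolic equation for $\psi$ whose source is $\nabla\operatorname{div}u$. The standard route is linearization plus iteration: given data $(v,w,\eta)$ in a suitable complete metric space, solve the decoupled linear problems
\begin{equation*}
\begin{cases}
\tilde\phi_t+v\cdot\nabla\tilde\phi+\tfrac{\gamma-1}{2}w\,\operatorname{div}v=0,\\[3pt]
u_t+v\cdot\nabla u+\tfrac{2A\gamma}{\gamma-1}w\nabla w+Lu=\eta\cdot Q(v),\\[3pt]
\psi_t+\nabla(v\cdot\psi)+\nabla\operatorname{div}v=0,
\end{cases}
\end{equation*}
each of which is classical: a transport equation solved by characteristics, a linear parabolic equation with bounded coefficients solved by Galerkin/semigroup methods, and a linear symmetric hyperbolic system solved by Friedrichs' method. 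The resulting map $(v,w,\eta)\mapsto(u,\tilde\phi,\psi)$ is then shown to admit a fixed point, which yields the regular solution once one inverts the change of variables.

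The heart of the argument is a set of a priori bounds uniform in the iteration. For $\tilde\phi$, the transport equation and Gronwall preserve $H^3$-regularity and strict positivity along characteristics, so $\rho>0$ is never violated in $[0,T_*]\times\mathbb R^2$. For $u$, standard parabolic energy estimates (test against $u$, $\Delta u$, $\Delta^2 u$, $u_t$, $\Delta u_t$) combined with the identity $|Lu|_2\sim |\nabla^2u|_2$ give the $H^3$-in-space and $H^1$-in-time control; the source $\eta\cdot Q(v)$ is handled through Sobolev embeddings $H^2(\mathbb R^2)\hookrightarrow L^\infty$ and $L^6\cap D^1\cap D^2\hookrightarrow L^\infty$, which is precisely why the initial assumption is $\nabla\ln\rho_0\in L^6\cap D^1\cap D^2$ rather than merely $H^2$. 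For $\psi$, the antisymmetric structure of the transport part yields, after testing with $\psi$, $\Delta\psi$ and then differentiating, energy inequalities of the form
\begin{equation*}
\tfrac{d}{dt}\|\psi\|_{L^6\cap D^1\cap D^2}^2\le C\bigl(1+\|v\|_{H^3}^2\bigr)\|\psi\|_{L^6\cap D^1\cap D^2}^2+C\|\nabla\operatorname{div}v\|_{H^2}^2,
\end{equation*}
so the two orders of spatial regularity on $\psi$ are balanced against the matching two orders on $u$.

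The principal technical obstacle is precisely this coupling between $u$ and $\psi$: the source in the parabolic equation for $u$ contains $\psi=\nabla\ln\rho$, which becomes unbounded in the far field (vacuum at infinity), while the source in the hyperbolic equation for $\psi$ contains $\nabla\operatorname{div}v$, which is of second order in $v$. Closing the loop requires that one term of $|\psi|_{D^2}$ be absorbed by $|u|_{D^3}$ on the parabolic side and that $|\psi|_\infty$ be recovered from $L^6\cap D^1\cap D^2$ via Gagliardo--Nirenberg in two dimensions. This explains why the 2-D case is genuinely harder than the 3-D case treated in Lemma~\ref{apd1}: in $\mathbb R^2$ the embedding $D^1\hookrightarrow L^\infty$ fails, which forces the additional $L^6\cap D^2$ assumption on $\nabla\ln\rho_0$.

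Once the uniform bounds are in hand, contraction of the iteration map in a lower-order norm (say $L^\infty_tL^2_x$ for $u$ and $L^\infty_tL^2_x\cap L^\infty_tL^6_x$ for $\psi$) gives existence and uniqueness of the fixed point on a common interval $[0,T_*]$; higher regularity at the limit follows from weak compactness plus the uniform estimates. The time-weighted bounds (the factors of $t^{1/2}$ in front of $u\in D^4$, $u_t\in D^2$, $u_{tt}\in L^2\cap D^1$) are obtained by a standard bootstrap: differentiate the momentum equation in $t$, multiply by $t$ or $t^{1/2}$, and use the uniform $L^2_t$-integrability estimates already established to absorb the singular initial layer, exactly as done for $t^{1/2}u_{tt}$ in our Lemma~\ref{l4.10}. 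Finally, time continuity of $\rho^{(\gamma-1)/2}$, $\nabla\ln\rho$, and $u$ in the stated spaces is recovered via the Aubin--Lions lemma together with the uniform bounds and the equations themselves, completing the proof.
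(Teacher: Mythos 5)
The paper itself contains no proof of this lemma: it is stated in Appendix C purely as a citation of \cite{sz3}, ``for the sake of completeness,'' so there is no in-paper argument to compare yours against step by step. Measured against the strategy the authors attribute to \cite{sz3} in the introduction and in Section 2.2, your outline is the right one: pass to the reformulated system \eqref{ref2} in the variables $(\rho^{\frac{\gamma-1}{2}},u,\nabla\ln\rho)$, use that for $\delta=1$ the momentum equation is uniformly parabolic with the possibly singular source $\psi\cdot Q(u)$, control $\psi$ through the symmetric hyperbolic equation with source $\nabla\mathrm{div}\,u$, and close the a priori estimates for $\psi$ in $L^6\cap D^1\cap D^2$; the linearization/iteration/fixed-point scaffolding and the $t^{\frac12}$-weighted bootstrap (as in Lemma \ref{l4.10}) are indeed the devices used there.

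Two caveats. First, your explanation of the $L^6$ hypothesis is slightly off: $D^1\hookrightarrow L^\infty$ fails in $\mathbb{R}^3$ just as in $\mathbb{R}^2$; the real point is that $D^{1,2}(\mathbb{R}^2)$ is critical and embeds into no Lebesgue space, whereas $D^{1,2}(\mathbb{R}^3)\hookrightarrow L^6$, so in two dimensions the $L^6$ integrability of $\nabla\ln\rho_0$ must be assumed rather than deduced; once $\psi\in L^6$ with $\nabla\psi\in H^1(\mathbb{R}^2)\hookrightarrow L^q$ for all $q<\infty$, the needed $L^\infty$ bound follows. Second, your proposal is a roadmap rather than a proof: it does not address the points where the actual difficulty of \cite{sz3} lies, namely the approximation of the far-field-vacuum data by uniformly positive data and the passage to the limit preserving $\rho>0$ and the stated regularity, and the verification that the constraint $\psi=\frac{2}{\gamma-1}\nabla\tilde\phi/\tilde\phi$ (equivalently $\psi=\nabla\ln\rho$), which is destroyed by the decoupled linear iteration, is recovered at the fixed point so that a solution of \eqref{ref2} really yields a solution of \eqref{eq:1.1}. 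These would have to be supplied before the sketch counts as a proof, but as a description of the method it is faithful to the cited source.
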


\bigskip
\noindent{\bf Acknowledgement:}
The research  was  supported in part  by  National Natural Science Foundation of China under Grants  12101395 and 12161141004. The research of S. Zhu was also supported partially by   The Royal Society  under Grants: Newton International Fellowships NF170015,  Newton International Fellowships Alumni AL/201021 and AL/211005.

\bigskip
\noindent{\bf Conflict of Interest:} The authors declare  that they have no conflict of
interest. 
The authors also  declare that this manuscript has not been previously  published, and will not be submitted elsewhere before your decision.

\bigskip

\end{document}